\documentclass[11pt]{article}
\usepackage{mathrsfs}
\usepackage{bbm}
\usepackage{amsfonts}
\usepackage{amsmath,amssymb}
\usepackage{amsmath}
\usepackage{amssymb}
\usepackage{graphicx}
\usepackage{color}

\setcounter{MaxMatrixCols}{10}

\setlength{\hoffset}{-0.4mm} \setlength{\voffset}{-0.4mm}
\setlength{\textwidth}{160mm} \setlength{\textheight}{235mm}
\setlength{\topmargin}{0mm} \setlength{\oddsidemargin}{0mm}
\setlength{\evensidemargin}{0mm} \setlength \arraycolsep{1pt}
\setlength{\headsep}{0mm} \setlength{\headheight}{0mm}
\newtheorem{theorem}{Theorem}[section]
\newtheorem{corollary}[theorem]{Corollary}

\newtheorem{lemma}[theorem]{Lemma}
\newtheorem{proposition}[theorem]{Proposition}
\newtheorem{remark}[theorem]{Remark}

\newenvironment{proof}[1][Proof]{\noindent \textbf{#1.} }{\  $\Box$}
\numberwithin{equation}{section}

\begin{document}

\title{\textbf{ A global maximum principle for optimal control of general mean-field forward-backward stochastic systems with jumps}}
\author{Tao HAO\thanks{%
School of Statistics, Shandong University of Finance and Economics, Jinan 250014, P. R. China.
haotao2012@hotmail.com. Research supported by National Natural Science Foundation of China
(Grant Nos. 71671104,11871309,11801315,71803097), the Ministry of Education of Humanities and Social Science Project (Grant No. 16YJA910003),
Natural Science Foundation of Shandong Province (No. ZR2018QA001),
A Project of Shandong  Province Higher Educational Science and Technology Program (Grant Nos. J17KA162, J17KA163),
and Incubation Group Project of Financial Statistics and Risk Management of SDUFE.} \and %
Qingxin MENG\thanks{%
Qingxin  Meng is the corresponding author. Department of Mathematics, Huzhou University, Zhejiang 313000, P. R. China.
mqx@zjhu.edu.cn. Research supported by Natural Science Foundation of Zhejiang Province for Distinguished Young Scholar (Grant No. LR15A010001),
and the National Natural Science Foundation of China (Grant No. 11471079).}  }
\maketitle
\date{}

\begin{abstract}
In this paper we prove a  necessary condition of the optimal control problem for a class of general mean-field forward-backward stochastic systems with jumps in the case where the diffusion coefficients depend on control, the control set does not need to be convex, the coefficients of jump terms are independent
of control as well as the coefficients of mean-field backward stochastic differential equations depend on the joint law of $(X(t),Y(t))$.
Two new adjoint equations are brought in as well as several
new generic estimates of their solutions  are investigated for analysing the higher terms, especially, those involving the expectation which come from the derivatives
of the coefficients with respect to the measure. Utilizing these subtle estimates,
the second-order expansion of the cost functional, which is the key point to analyse the necessary condition,  is obtained, and whereafter the stochastic maximum principle.

\end{abstract}

\textbf{Key words}: Stochastic control;  Global maximum principle;  General mean-field forward-backward
stochastic differential equation with jumps

\textbf{MSC-classification}: 93E20; 60H10\\\\


\noindent \section{{\protect \large {Introduction}}}

For some given measurable mappings $b,\sigma,\beta,f,\phi,$
we consider the general mean-field forward-backward stochastic differential equation (FBSDE):
\begin{equation}\label{equ 1.1}
\left\{
\begin{aligned}
dX^v(t)&=b(t,X^v(t),P_{X^v(t)},v(t))dt+\sigma(t,X^v(t),P_{X^v(t)},v(t))dW(t)\\
&\quad+\int_G\beta(t,X^v(t-),P_{X^v(t)},e)N_\lambda(de,dt),\ t\in[0,T],\\
 -dY^v(t)&=f(t,X^v(t),Y^v(t),Z^v(t),
 K^v(t,\cdot),P_{(X^v(t),Y^v(t))},v(t))dt-Z^v(t)dW(t)\\
 &\quad-\int_GK^v(t,e)N_\lambda(de,dt),\ t\in[0,T],\\
 X^v(0)&=x_0,\ Y^v(T)=\phi(X^v(T),P_{X^v(T)}),
\end{aligned}
\right.
\end{equation}
where $P_\eta:=P\circ\eta^{-1}$ denotes the probability measure induced by the random variable $\eta$.
Our control problem is to minimize a cost functional of the form $J(v(\cdot))=Y^v(0)$.
The purpose of this paper is to investigate the necessary condition of the above control problem
in the case where $\sigma$ depends on control and, moreover,
the action space is a general space, which means  it is needlessly convex.

The motivation comes on the one hand from the rapid  development of the theory of mean-field FBSDEs in recent years, in
particular, after the appearance of the notion of the derivative of a function with respect to a measure, refer to Lions \cite{Lions} or
Cardaliaguet \cite{Car}, on the other hand from the work of Hu \cite{Hu}, which solved the Peng's open problem \cite{Peng2} completely.

Stochastic maximum principle (SMP) is an important tool to study stochastic control problem. A lot of papers on this subject have been published.
The earliest works can be retrospected to Kushner \cite{Kushner} and Bismut \cite{Bismut}. The subsequent works
refer to Bensoussan \cite{Ben}, Haussmann \cite{Haussmann}, Peng \cite{Peng1}, Yong
and Zhou \cite{YZ}, Framstad, {\O}ksenal  and Sulem \cite{FOS}, and so on. All above works were investigated in
classical setting, not in a mean-field framework. As everyone knows, mean-field stochastic differential equations
(SDEs) (also named McKean-Vlasov equations)
have been considered by Kac \cite{Kac1} as long ago as 1956. However, due to the special structure, mean-field backward stochastic
differential equations (BSDEs) were not obtained by Buckdahn, Djehiche, Li and Peng \cite{BDLP}
until 2009 with a purely probabilistic approach. From then on, further progresses on mean-field BSDEs were provided by,
for example,
Buckdahn, Li, Peng \cite{BLP}, Buckdahn, Djehiche, Li \cite{BDL}. Especially, recently with the pioneer work of Lions \cite{Lions} to introduce the derivative
of a function defined on $\mathcal{P}_2(\mathbb{R}^d)$ with respect to a measure, the theory of general mean-field FBSDEs and related optimal control
problems or potential games stirred greatly the zeal of a large number of scholars. For instance, we refer
Lasry, Lions \cite{LL} for the theory of mean-field game,
refer Buckdahn, Li, Peng, Rainer \cite{BLPR}, Hao, Li \cite{HL},
Li \cite{Li}, Chassagneux, Crisan,  Delarue \cite{CCD} for the investigation of the relationship of the solutions of mean-field FBSDEs and corresponding PDEs, and
refer Carmona, Delarue \cite{CD1}, \cite{CD2}, Carmona, Delarue, Lachapelle \cite{CDL} for the description of the approximate Nash
equilibriums of symmetric games, i.e., the probability interpretation of mean-field game. Note that in \cite{CD1}, the authors
proved the existence of the approximate Nash equilibriums by making use of the tailor-made form of  SMP.
However, the assumptions on their SMP are  heavy, such as the Hamiltonian being strictly convex in control.
A natural question is whether the necessary condition of the  optimal control problem for general mean-field forward-backward stochastic systems (\ref{equ 1.1})  holds still true under some  slightly loose assumptions. In this paper, we confirm this declare.

Let us look at the structure of the equation (\ref{equ 1.1}) and show four main obstacles encountered in investigating the above
mean-field optimal control problem systemly:

a) The equation (\ref{equ 1.1}) is a general mean-field FBSDE. In fact, most of the existing works in mean-field framework can be summarized
as two cases:
$$ \mathrm{i})\ \mathbb E[\psi(t,x, X(t),v)]|_{x=X(t),v=v_t};\qquad  \mathrm{ii})\ \psi(t,\omega,X(t),
\mathbb E[X(t)],v_t).$$
 However, either of the above cases can be
put into the general type (\ref{equ 1.1}) by the definition of expectation and some simple transform, for example, for i)
$$\overline{\psi}(t,\omega,X(t),P_{X(t)},v_t)
:=\mathbb E[\psi(t,x,X(t),v)]|_{x=X(t),v=v_t}
=\int_{\mathbb{R}^n}\psi(t,x,y,v)P_{X(t)}
(dy)\Big|_{x=X(t),v=v_t}.$$
As we know, it is very difficult to analyse the second-order derivative of a function with respect to a measure.
Because even through a function is infinitely differentiable in usual sense, maybe it is not twice Fr\'{e}chet
differentiable. However, in this paper we want to study the optimal control problem in the case that $\sigma$ depends on control and
the control set is not convex, following the scheme of Peng \cite{Peng1} or Hu \cite{Hu}. So
the first obstacle is how to deal with the
second-order derivatives of the coefficients with respect to a measure and some new subtle estimates related them,
see Lemma \ref{le 3.2}, Corollary \ref{cor 3.3} and Corollary \ref{cor 3.6}, which are the building blocks to prove our SMP.

b) Closely related to our work is a paper by Buckdahn, Li and Ma \cite{BLM}, where the cost functional is of the form
$$J(v)=\mathbb E\bigg[\int_0^Tf(t,X^v(t),P_{X^v(t)},v_t)dt+\phi(X^v(T), P_{X^v(T)})\bigg].$$
However, in our case the coefficient $f$  depends not only
on $x$ and the law of $P_{X^v(t)}$, but also on $(y,z,k)$ and the joint law of $(X^v(t),Y^v(t))$, i.e., $P_{(X^v(t),Y^v(t))}$.
Two obstacles are met in this setting. The first one is that the power of the term $Y^1\delta\sigma(t)\mathbbm{1}_{E_\varepsilon}(t)$
in the variation of $z$ is $O(\varepsilon)$, but not $o(\varepsilon)$. For overcoming this difficulty, we construct an auxiliary
mean-field BSDE (\ref{equ 4.2}),
whose solution  satisfies very artful estimate (\ref{equ 4.3}). The second one is that
due to $f$ depending on the law of $(X^v(t),Y^v(t))$, the equation (\ref{equ 4.2}) is a mean-field type, which leads to that
the dual SDE is also mean-field type when applying dual method, see (\ref{equ 5.5}). But
it is not trivial to prove
 the solution
of (\ref{equ 5.5}) being larger than zero strictly, which is different to the classical case, see Hu \cite{Hu}.

c) It should be pointed out that although the dynamics involving jump term, the coefficient $\beta$ does not depend on control. Indeed,
if $\beta$ depends on control, for the solution of the first-order variational equation we only have the estimate: for $\ell\geq1$,
\begin{equation}\label{equ 1.3}
\mathbb E\bigg[\sup_{t\in[0,T]}
|X^{1,\varepsilon}(t)|^{2\ell}
\bigg]\leq L_\ell \varepsilon,
\end{equation}
but not
\begin{equation}\label{equ 1.4}
\mathbb E\bigg[\sup_{t\in[0,T]}
|X^{1,\varepsilon}(t)|^{2\ell}
\bigg]\leq L_\ell \varepsilon^{\ell},
\end{equation}
which is not sufficient to prove the SMP. This is the last obstacle we met.

There are two points, which should be lighten. Firstly, different to the classical case, not only the Taylor expansion of $X^\varepsilon$,
$X^\varepsilon=X^*+X^{1,\varepsilon}+X^{2,\varepsilon}+o(\varepsilon)$, but also the second-order expansion of cost functional $Y^\varepsilon$,
$Y^\varepsilon=Y^*+\breve{P}+Y^1+Y^2+o(\varepsilon)$ are needed, where the convergence of both of them are in $L^2(\Omega, C[0,T])$ sense.
To the best of our knowledge, the second expansion has not been seen in the existing literatures, in particular, $\breve{P}$ being a solution
of a linear mean-field BSDE.
Secondly, we establish some new and more generic estimates, see (\ref{equ 3.4-11}). Although the proof of
(\ref{equ 3.4-11}) follows the scheme of  Proposition 4.3 \cite{BLM}, the presence of the jump term makes the proof more technical.

The main result of this paper can be stated roughly as follows: Consider Hamiltonian
\begin{equation}\label{equ 1.5}
\begin{aligned}
&H(t,x,y,z,k,\nu,\mu,v;p,q,P)\\
&\quad=pb(t,x,\nu,v)+q\sigma(t,x,\nu,v)
+\frac{1}{2}P\Big(\sigma(t,x,\nu,v)-\sigma(t,X^*(t),P_{X^*(t)},u^*(t))\Big)^2\\
&\quad+f\Big(t,x,y,z+p\Big(\sigma(t,x,\nu,v)-\sigma(t,X^*(t),P_{X^*(t)},u^*(t))\Big), k,\mu, v\Big),
\end{aligned}
\end{equation}
$(t,x,y,z,k,\nu,\mu,v,p,q,P)\in[0,T]\times\mathbb{R}^3\times L^2(G,\mathscr{B}(G),\lambda)
\times \mathcal{P}_2(\mathbb{R})\times\mathcal{P}_2(\mathbb{R}^2)\times U \times\mathbb{R}^3.$\\
Let $u^*$ be the optimal control and $(X^*,Y^*,Z^*,K^*)$ be the optimal trajectory. By $(Y^i,Z^i,K^i),\ i=1,2$
we denote the solutions of the first- and second-order adjoint equations, respectively. Under some usual assumptions and
the additional assumptions
$$\widetilde{f}_{\mu_2}(t)=(\frac{\partial f}{\partial \mu})_2
(t,\widetilde{X}^*(t), \widetilde{Y}^*(t), \widetilde{Z}^*(t), \widetilde{K}^*(t,\cdot), P_{(X^*(t),Y^*(t))},\widetilde{u}^*(t);X^*(t),Y^*(t))>0,$$
$t\in[0,T],\ \widetilde{P}\otimes P$-a.s., and
$$f_k(t)=\frac{\partial f}{\partial k}(t,X^*(t), Y^*(t), Z^*(t), K^*(t,\cdot), P_{(X^*(t),Y^*(t))},u^*(t))>0,$$
$t\in[0,T], P$-a.s., we have
\begin{equation}\label{equ 1.6}
\begin{aligned}
&H(t,X^*(t),Y^*(t),Z^*(t),K^*(t,\cdot),P_{X^*(t)}, P_{(X^*(t),Y^*(t))},v;Y^1(t),Z^1(t),Y^2(t))\\
&\geq H(t,X^*(t),Y^*(t),Z^*(t),K^*(t,\cdot),P_{X^*(t)}, P_{(X^*(t),Y^*(t))},u^*(t);Y^1(t),Z^1(t),Y^2(t)),
\end{aligned}
\end{equation}
\indent \qquad\qquad\qquad\qquad\qquad\qquad\qquad\qquad\qquad\qquad\qquad\qquad\qquad
 $\forall v\in U$,\  a.e.,\ a.s.

This paper is arranged as follows. Section 2 recalls the notion of the derivative of a
function with respect to a measure and some notations. The formulation of the optimal control problem
is introduced in Section 3. The variational equations, the adjoint equations and the estimates of their
solutions are also given in this section. Section 4 is devoted to the introduction of the first important
conclusion of this paper---the second-order expansion of cost functional. The second important conclusion---SMP is proved in Section 5. In the last section some  necessary notations and the proof of the auxiliary
lemma are shown for closing our paper.

\section{{\protect \large {Preliminaries}}}

\subsection{Derivative of function $h:\mathcal{P}(\mathbb{R}^2)\rightarrow\mathbb{R}$}
Let $\mathcal{P}(\mathbb{R}^d)$ be the set of  all Borel probability measures on $\mathbb{R}^d$.
For $1\leq p< +\infty$,
let $\mathcal{P}_p(\mathbb{R}^d)$ be the subspace of $\mathcal{P}(\mathbb{R}^d)$ of probability measures having a finite moment of order $p$ over
$(\mathbb{R}^d, \mathscr{B}(\mathbb{R}^d))$, and moreover, we endow the space $\mathcal{P}_p(\mathbb{R}^d)$ with the $p$-Warsserstein metric: for
$\nu_1,\nu_2\in \mathcal{P}_p(\mathbb{R}^d)$,
$$W_p( \nu_1 ,  \nu_2 )=\inf\Big\{\Big[\int_{\mathbb{R}^{2d}}|x-y|^p\varrho(dx,dy) \Big]^{\frac{1}{p}}:  \varrho\in\mathcal{P}_p(\mathbb{R}^{2d}),\
\varrho(\cdot,\mathbb{R}^d)=\nu_1,\ \varrho(\mathbb{R}^d,\cdot)=\nu_2\Big\}. $$

We now recall the derivative of a function $h$ defined on $\mathcal{P}_2(\mathbb{R}^d)$ with respect to a measure, see Cardaliaguet \cite{Car}, or
Buckdahn, Li, Peng, Rainer \cite{BLPR} for more details.
We call the function $h:\mathcal{P}_2(\mathbb{R}^d)\rightarrow \mathbb{R}$ is differentiable in $\nu_0\in\mathcal{P}_2(\mathbb{R}^d)$, if
there exists a $\eta_0\in L^2(\mathcal{F};\mathbb{R}^d)$  with $\nu_0=P_{\eta_0}$, such that
the lifted function $\bar{h}:L^2(\mathcal{F};\mathbb{R}^d)\rightarrow \mathbb{R}$  defined by $\bar{h}(\eta):=h(P_\eta)$
is differentiable at $\eta_0$ in Fr\'{e}chet sense.
In other words, there exists a continuous linear functional $D\bar{h}(\eta_0): L^2(\mathcal{F};\mathbb{R}^d)\rightarrow \mathbb{R}$, such that
for $\eta\in L^2(\mathcal{F};\mathbb{R}^d)$,
\begin{equation}\label{equ 2.3}
\bar{h}(\eta_0+\eta)-\bar{h}(\eta_0)=D\bar{h}(\eta_0)(\eta)+o(||\eta||_{L^2}),
\end{equation}
with $||\eta||_{L^2}\rightarrow0$. From Riesz representation theorem and the argument of Cardaliaguet \cite{Car}, it follows that
there exists a Borel measurable function $g:\mathbb{R}^d\rightarrow\mathbb{R}^d$ depending only the law of $\eta_0$, but not the
random variable $\eta_0$ itself, such that (\ref{equ 2.3}) can read as

\begin{equation}\label{equ 2.4}
h(P_{\eta_0+\eta})-h(P_{\eta_0})=
<g(\eta_0), \eta>+o(||\eta||_{L^2}),
\end{equation}
where $<\cdot, \cdot>$ denotes the ``dual product" on $L^2(\mathcal{F};\mathbb{R}^d).$
From (\ref{equ 2.4}) we can define
$\partial_\nu h(P_{\eta_0};a):=g(a), a\in\mathbb{R}^d$, which is called
the derivative of $h$ at $P_{\eta_0}$. It should be pointed out that
the function $\partial_\nu h(P_{\eta_0};a)$ is only $P_{\eta_0}(da)$-a.e. uniquely determined.
In our case,  for simplicity we just consider those functions $h:\mathcal{P}_2(\mathbb{R}^d)\rightarrow\mathbb{R}$ being
differentiable in all elements of $\mathcal{P}_2(\mathbb{R}^d)$.

The following spaces have been introduced in \cite{BLPR}, \cite{HL}, \cite{Li}, \cite{BLM}. Here we borrow them. We denote

$\bullet$\ $C^{1,1}_b(\mathcal{P}_2(\mathbb{R}^d))$ to be all continuously differentiable function $h$ over $\mathcal{P}_2(\mathbb{R}^d)$
with Lipschitz-continuous bounded derivative, i.e., there exists a positive constant $C$ such that,\\
\mbox{}\qquad\qquad     (i)\ $|\partial_\nu h(\nu;a)|\leq C,\quad \forall a\in \mathbb{R}^d,\ \nu\in\mathcal{P}_2(\mathbb{R}^d)$;\\
\mbox{}\qquad\qquad     (ii)\ $|\partial_\nu h(\nu_1;a_1)-\partial_\nu h(\nu_2;a_2)|\leq C\Big(W_2(\nu_1,\nu_2)+|a_1-a_2|\Big),\  \nu_1,\nu_2\in\mathcal{P}_2(\mathbb{R}^d),\ a_1,a_2\in\mathbb{R}^d.$

$\bullet$\ $C_b^{2,1}(\mathcal{P}_2(\mathbb{R}^d))$ to be all measurable function $h\in C_b^{1,1}(\mathcal{P}_2(\mathbb{R}^d))$ satisfying:\\
\mbox{}\qquad\qquad   (i) for all $a\in\mathbb{R}^d$, $(\partial_\nu h)_\ell(\cdot;a)\in C_b^{1,1}(\mathcal{P}_2(\mathbb{R}^d)),\ \ell=1,2,\cdot,\cdot,\cdot,d$;\\
\mbox{}\qquad\qquad   (ii) for each $\nu\in\mathcal{P}_2(\mathbb{R}^d)$, $\partial_\nu h(\nu;\cdot)$ is differentiable;\\
\mbox{}\qquad\qquad   (iii) the second-order derivatives $\partial_a\partial_\nu h:\mathcal{P}_2(\mathbb{R}^d)\times
\mathbb{R}^d\rightarrow \mathbb{R}^d\otimes\mathbb{R}^d$ and
$\partial^2_\nu h(P_{\nu_0};a,b):=\partial_\nu (\partial_\nu h(\cdot;a))(P_{\nu_0};b):
\mathcal{P}_2(\mathbb{R}^d)\times\mathbb{R}^d\times\mathbb{R}^d\rightarrow\mathbb{R}^d\otimes\mathbb{R}^d$
are bounded and Lipschitz continuous.

%

\subsection{Function spaces}

Let $T$ be a fixed strictly positive real number and  $(\Omega,
\mathscr{F},\{\mathscr{F}_t\}_{0\leq t\leq T}, P)$ be a complete filtrated probability space on which a one-dimensional standard Brownian motion $\{W(t), 0\leq t\leq T\}$
is defined. Denote by $\mathscr{P}$
the $\mathscr{F}_t$-predictable $\sigma$-field on $[0, T]\times \Omega$ and by $\mathscr B(\Lambda)$
  the Borel $\sigma$-algebra of any topological space $\Lambda.$ Let $(G,\mathscr B (G), \lambda)$ be a measurable space with $\lambda(G)<\infty $ and $q: \Omega\times D_q \longrightarrow Z$ be an
$\mathscr F_t$-adapted stationary
Poisson point process with characteristic measure $\lambda$, where
 $D_q$  is a countable subset of $(0, \infty)$. Then the counting measure induced by $q$ is
$$
N_q((0,t]\times A):=\#\{s\in D_q; s\leq t, q(s)\in A\},~~~for~~~ t>0, A\in \mathscr B (G).
$$
Let
\begin{equation}\label{equ 2.1}
N_{\lambda,q}(de,dt):=N_q(de,dt)-\lambda(de)dt
\end{equation} be a compensated Poisson random martingale  measure which
is assumed to be independent of Brownian motion
$\{W(t), 0\leq t\leq T\}$. In what follows, when no confusion, we always omit the subscript $q$, and write (\ref{equ 2.1}) as
\begin{equation}\label{equ 2.2}
N_{\lambda}(de,dt)=N(de,dt)-\lambda(de)dt.
\end{equation}
 Assume $\mathbb{F}=\{\mathscr{F}_t\}_{0\leq t\leq T}$ is $P$-completed filtration generated by $\{W(t),
0\leq t\leq T\}$ and $\{\iint_{(0,t] \times A }$\\
 $N_{\lambda}(de,dt),0\leq t\leq T, A\in \mathscr B (G) \},$  and moreover, augmented by a $\sigma$-field $\mathscr{F}^o$ with the following property:\\
\indent(i)\ the Brownian motion $W$ and the Poisson random measure $N_\lambda$ are independent of $\mathscr{F}^o$;\\
\indent(ii)\ $\mathcal{P}_2(\mathbb{R}^d)=\{P_\eta,\ \eta\in L^2(\mathscr{F}^o;\mathbb{R}^d)\};$\\
\indent(iii)\ $\mathscr{F}^o$ contains the family of all the $P$-null subsets  $\mathcal{N}_P$.

The following several spaces are used frequently.

$\bullet$  By $L^p(\mathscr{F};\mathbb{R}^d)$ we denote the collection of $\mathbb{R}^d$-valued, $\mathscr{F}$-measurable random variables $\eta$
with $||\eta||_p:=\mathbb{E}[|\eta|^p]^{\frac{1}{p}}<+\infty$.

$\bullet$  By $\mathcal{S}^2_{\mathbb{F}}(0,T;\mathbb{R}^d)$ we denote the space of $\mathbb{R}^d$-valued, $\mathbb{F}$-predictable process $\varphi$
on $[0,T]$ with $\mathbb{E}[\sup_{0\leq s\leq T}|\varphi(t)|^2]<+\infty.$

$\bullet$   By $\mathcal{H}^2_{\mathbb{F}}(0,T;\mathbb{R}^d)$ we denote the space of all $\mathbb{R}^d$-valued, $\mathbb{F}$-adapted c\`{a}dl\`{a}g process $\varphi$ on $[0,T]$, such that  $\mathbb{E}[\int_0^T|\varphi(t)|^2dt]<+\infty.$

$\bullet$   By $\mathcal{K}^2_\lambda(0,T;\mathbb{R}^d)$ we denote
the space of  all $\mathbb{R}^d$-valued, $\mathscr{P}\times \mathscr{B}(G)$-measure process
$r$ on $[0,T]\times G$ satisfying
$E[\int_0^T\int_G|r(t,e)|^2\lambda(de)dt]<+\infty$.

Note that we denote $
 L^2(G,\mathscr{B}(G),\lambda;\mathbb{R}),
\mathcal{S}^2_{\mathbb{F}}(0,T;\mathbb{R}),\mathcal{H}^2_{\mathbb{F}}(0,T;\mathbb{R}),\mathcal{K}^2_\lambda(0,T;\mathbb{R})$
by
$ L^2(G,\mathscr{B}(G),\lambda),$ $\mathcal{S}^2_{\mathbb{F}}(0,T),\mathcal{H}^2_{\mathbb{F}}(0,T),\mathcal{K}^2_\lambda(0,T)$,
respectively, for short.

\section{Problem formulation, variational equations and adjoint equations}
\subsection{Problem formulation}
Let us first formulate the optimal control problem.
Let $U$ be a subset of $\mathbb{R}$. $v(\cdot):[0,T]\times \Omega\rightarrow U$ is called an
admissible control if $v(\cdot)$ is
$\mathcal{F}_t$-progressive measurable
process.
By $\mathcal{U}_{ad}$ we denote the set of all admissible controls. Let the mappings
$$
\begin{aligned}
&b:[0,T]\times \mathbb{R}\times \mathcal{P}_2(\mathbb{R})\times U\rightarrow \mathbb{R},\quad
\sigma:[0,T]\times \mathbb{R}\times \mathcal{P}_2(\mathbb{R})\times U\rightarrow \mathbb{R},\\
&\beta:[0,T]\times \mathbb{R}\times \mathcal{P}_2(\mathbb{R})\times G\rightarrow \mathbb{R},\quad
\phi: \mathbb{R}\times \mathcal{P}_2(\mathbb{R})\rightarrow \mathbb{R},\\
&f:[0,T]\times \mathbb{R}\times \mathbb{R}\times \mathbb{R}\times L^2(G,\mathscr{B}(G),\lambda)\times\mathcal{P}_2(\mathbb{R}^2)\times U\rightarrow \mathbb{R},
\end{aligned}
$$
satisfy: \\
$\underline{\text{Assumption}\ (\mathbf{A3.1})}$   the measurable mappings $b,\sigma,f, \phi$ are bounded,
 and for each $e\in G$, $\beta$ is bounded by $C(1\wedge|e|)$ with $C$ independent of $e\in G$;\\
$\underline{\text{Assumption}\ (\mathbf{A3.2})}$ for $t\in[0,T],u\in U, e\in G$, $(b,\sigma)(t,\cdot,\cdot,u)\in C_b^{1,1}(\mathbb{R}\times \mathcal{P}_2(\mathbb{R}))$,
$\beta(t,\cdot,\cdot,e)\in C_b^{1,1}(\mathbb{R}\times \mathcal{P}_2(\mathbb{R}))$,
$ f(t,\cdot,\cdot,\cdot,\cdot,\cdot,u)\in C_b^{1,1}(\mathbb{R}^3\times L^2(G,\mathscr{B}(G),\lambda)\times \mathcal{P}_2(\mathbb{R}^2))$,
$\phi(\cdot,\cdot)\in C_b^{1,1}(\mathbb{R}\times \mathcal{P}(\mathbb{R}))$, i.e.,\\
\indent (i) for $(t,x,y,z,k,u,e)\in[0,T]\times \mathbb{R}^3\times L^2(G,\mathscr{B}(G),\lambda)\times U\times G$,
$(b,\sigma)(t,x,\cdot,u)\in C_b^{1,1}(\mathcal{P}_2(\mathbb{R})),$
$\beta(t,x,\cdot,e)\in C_b^{1,1}(\mathcal{P}_2(\mathbb{R}))$, $f(t,x,y,z,k,\cdot,u)\in C_b^{1,1}(\mathcal{P}_2(\mathbb{R}^2)),$
$\phi(x,\cdot)\in C_b^{1,1}(\mathcal{P}_2(\mathbb{R}))$;\\
\indent (ii) for $(t,\nu,\mu,u,e)\in[0,T]\times\mathcal{P}_2(\mathbb{R})\times\mathcal{P}_2(\mathbb{R}^2)\times U\times G$,
$(b,\sigma)(t,\cdot,\nu,u)\in C_b^1(\mathbb{R}),$ $\beta(t,\cdot,\nu,e)\in C_b^1(\mathbb{R})$,
$f(t,\cdot,\cdot,\cdot,\cdot,\mu,u)\in C_b^1(\mathbb{R}^3\times L^2(G,\mathscr{B}(G),\lambda))$, $\phi(\cdot,\nu)\in C_b^1(\mathbb{R})$;\\
\indent (iii)  all the first-order derivatives $\partial_\ell\psi$,\  $\psi=b,\sigma,f,\phi,\ \ell=x,y,z,\nu,\mu$
are bounded and Lipschitz continuous with the constant independent of $u\in U$; for each
$e\in G$, $\partial_x \beta,\ \partial_\nu\beta$ are bounded by $C(1\wedge|e|)$ and Lipschitz continuous with
the constant  $C$ independent of $u\in U$ and $e\in G$;\\
and, furthermore,  \\
$\underline{\text{Assumption}\ (\mathbf{A3.3})}$  let $b,\sigma,\beta, f,\phi$ satisfy Assumptions (A3.1)-(A3.2), and, meanwhile,
for $t\in[0,T],u\in U, e\in G$, $(b,\sigma)(t,\cdot,\cdot,u)\in C_b^{2,1}(\mathbb{R}\times \mathcal{P}_2(\mathbb{R}))$,
$\beta(t,\cdot,\cdot,e)\in C_b^{2,1}(\mathbb{R}\times \mathcal{P}_2(\mathbb{R}))$,
$ f(t,\cdot,\cdot,\cdot,\cdot,\cdot,u)\in C_b^{2,1}(\mathbb{R}^3\times L^2(G,\mathscr{B}(G),\lambda)\times \mathcal{P}_2(\mathbb{R}^2))$,
$\phi(\cdot,\cdot)\in C_b^{2,1}(\mathbb{R}\times \mathcal{P}(\mathbb{R}))$, i.e., the derivatives of $b,\sigma,\beta,f,\phi$
enjoy the following properties:\\
\indent(i) for  $(t,u,e)\in[0,T]\times U\times G$, $(\partial_x b, \partial_x \sigma)(t,\cdot,\cdot,u)\in C_b^{1,1}(\mathbb{R}\times \mathcal{P}_2(\mathbb{R})),$
$\partial_x \beta(t,\cdot,\cdot,e)\in C_b^{1,1}(\mathbb{R}\times \mathcal{P}_2(\mathbb{R}))$,
$\partial_\ell f(t,\cdot,\cdot,\cdot,\cdot,\cdot,u)\in C_b^{1,1}(\mathbb{R}^3\times L^2(G,\mathscr{B}(G),\lambda)\times \mathcal{P}_2(\mathbb{R}^2))$,
$\ell=x,y,z,k$, $\partial_x\phi(\cdot,\cdot)\in C_b^{1,1}(\mathbb{R}\times \mathcal{P}(\mathbb{R}))$;\\
\indent(ii) for  $(t,u,e)\in[0,T]\times U\times G$,
$(\partial_\nu b, \partial_\nu \sigma)(t,\cdot,\cdot,u;\cdot)\in C_b^{1,1}(\mathbb{R}\times \mathcal{P}_2(\mathbb{R})\times \mathbb{R}),$
$\partial_\nu \beta(t,\cdot,\cdot,e;\cdot)\in C_b^{1,1}(\mathbb{R}\times \mathcal{P}_2(\mathbb{R})\times\mathbb{R})$,
$(\partial_{\mu} f)_j(t,\cdot,\cdot,\cdot,\cdot,\cdot,u;\cdot,\cdot)\in C_b^{1,1}(\mathbb{R}^3\times L^2(G,\mathscr{B}(G),\lambda)\times \mathcal{P}_2(\mathbb{R}^2)\times\mathbb{R}^2),\ j=1,2$,
$\partial_\nu\phi(\cdot,\cdot;\cdot)\in C_b^{1,1}(\mathbb{R}\times \mathcal{P}(\mathbb{R})\times\mathbb{R})$;\\
\indent(iii)  all the second-order derivatives of $b,\sigma,f,\phi$ are bounded and Lipschitz continuous with
the Lipschitz constants independent of $u\in U$; for each $e\in G$, all the second-order derivatives of $\beta$
are bounded by $C(1\wedge|e|)$,  and Lipschitz continuous with the constant $C$ independent of $(e,u)\in G\times U$.\\

For $v(\cdot)\in \mathcal{U}_{ad}$,  under the Assumptions (A3.1)-(A3.2), the equation (\ref{equ 1.1}) possesses a unique solution
$(X^v,Y^v,Z^v,K^v).$

The target of the optimal control problem consists in minimizing $J(v(\cdot))=Y^{v}(0)$ over $\mathcal{U}_{ad}.$
In other words, whether there exists a $u^*(\cdot)\in\mathcal{U}_{ad}$ such that
\begin{equation}\label{equ 3.1-1}
J(u^*(\cdot))=\inf_{v\in \mathcal{U}_{ad}}J(v(\cdot)).
\end{equation}
The main purpose of this paper is to study the  necessary condition of the optimal control problem (\ref{equ 1.1}) and (\ref{equ 3.1-1}).

\begin{remark}
Throughout this paper, we  set $\rho:(0,+\infty)\rightarrow(0,+\infty)$ is  a function with the property
$\rho(\varepsilon)\rightarrow0,$   as  $\varepsilon\rightarrow0$, and $C$ is a positive constant, both of which
maybe change from one appearance to another.
\end{remark}

 \subsection{Variational equations}
This subsection is devoted to the introduction of the first- and  second-order variational equations,
as well as some estimates of their solutions.

Now let  $u^*(\cdot)\in \mathcal{U}_{ad}$ be an optimal control, and by $(X^*(\cdot),Y^*(\cdot),Z^*(\cdot),
K^*(\cdot))=(X^{u^*}(\cdot),Y^{u^*}(\cdot),
\\Z^{u^*}(\cdot),K^{u^*}(\cdot))$,
the solution of (\ref{equ 1.1}) with $u^*(\cdot)$ instead of $v(\cdot)$, we denote the optimal state process.
It is clear from the definition of a function with respect to a measure, that when studying the first- and second-order derivatives of
coefficients with respect to a measure, some auxiliary probability spaces are needed. Hence we would like to introduce first
an intermediate probability space and the stochastic processes defined on it as a representative. The other probability spaces and
corresponding stochastic processes can be understood in the same sense.  For this end,
let  $(\overline{\Omega} , \overline{\mathscr{F}},\overline{P})$ be
an intermediate complete probability space, which is independent of $(\Omega, \mathscr{F},P)$.
The pair $(\overline{W},\overline {N_\lambda})$ defined on space $(\overline{\Omega}, \overline{\mathscr{F}},\overline{P})$ is an independent copy of
$(W,N_\lambda)$, i.e., $(\overline{W},\overline{N_\lambda})$ under $\overline{P}$ has the same law as $(W,N_\lambda)$ under $P$.
By $\overline{X}^v(\cdot)$ we denote the corresponding state trajectory but driven by $(\overline{W},\overline{N_\lambda})$
instead of $(W,N_\lambda)$ in (\ref{equ 1.1}).
$\overline{\mathbb E}[\cdot]$ only acts on the random variables or/and the stochastic
processes with ``bar".
$(\widetilde{\Omega},
\widetilde{\mathscr{F}},
\widetilde{P},\widetilde{X}(t),
\widetilde{\mathbb E}[\cdot])$
and
$(\widehat{\Omega},\widehat{\mathscr{F}},
\widehat{P},\widehat{X}(t),\widehat{\mathbb E}[\cdot])$
can be understood in the same meaning. Note that $
(\Omega, \mathscr{F},P),
(\widehat{\Omega}, \widehat{\mathscr{F}},\widehat{P})$, $(\widetilde{\Omega},\widetilde{\mathscr{F}},\widetilde{P})$ and $(\overline{\Omega},\overline{\mathscr{F}},\overline{P})$
are also independent.

Let $v(\cdot)$ be any given
admissible control. For $\phi=b, \sigma, b_x, \sigma_x$  and $\psi=b, \sigma$,  define
$$
\begin{aligned}
&\delta\phi(t):=\phi(t,X^*(t),P_{X^*(t)},v(t))-\phi(t,X^*(t),P_{X^*(t)},u^*(t)),\\
&(\psi_x,\psi_{xx})(t):=(\frac{\partial\psi}{\partial x} ,\frac{\partial^2\psi}{\partial x^2})(t,X^*(t),P_{X^*(t)},u^*(t)),\\
&(\psi_\nu,\psi_{\nu a})(t;\widetilde{X}^*(t)):=(\frac{\partial\psi}{\partial \nu} ,\frac{\partial^2\psi}{\partial \nu \partial a})(t,X^*(t),P_{X^*(t)},u^*(t);\widetilde{X}^*(t)),\\
&(\widetilde{\psi}_\nu,\widetilde{\psi}_{\nu a})(t)=(\frac{\partial\psi}{\partial \nu} ,\frac{\partial^2\psi}{\partial \nu \partial a})(t,\widetilde{X}^*(t), P_{ X^*(t)},\widetilde{u}^*(t);X^*(t)),\\
&\psi_{\nu \nu}(t;\widehat{\widetilde{X}}^*(t)):=\frac{\partial^2\psi}{\partial \nu^2}(t,X^*(t),P_{X^*(t)},u^*(t);
\widetilde{X}^*(t), \widehat{X}^*(t)).\\
\end{aligned}
$$

Let $\varepsilon>0$, and $E_\varepsilon\subset[0,T]$ be a Borel set with Borel measure $|E_\varepsilon|=\varepsilon$.
For any $v(\cdot)\in\mathcal{U}_{ad}$, we consider the ``spike variation" of the optimal control $u^*(\cdot)$:
$u^\varepsilon(t):=u^*(t)\mathbbm{1}_{(E_\varepsilon)^c}+v(t)\mathbbm{1}_{E_\varepsilon}$, and
let $(X^\varepsilon, Y^\varepsilon,Z^\varepsilon,K^\varepsilon):
=(X^{u^\varepsilon},Y^{u^\varepsilon},
Z^{u^\varepsilon},K^{u^\varepsilon})$
 be the solution of (\ref{equ 1.1}) under the control $u^\varepsilon(\cdot).$
Inspired by Peng \cite{Peng1}, when the control is involved in the diffusion term and  the control domain is  not convex,
for each $\varepsilon>0$, one can find two processes $X^{1,\varepsilon}$ and $X^{2,\varepsilon}$, such that $X^\varepsilon-X^*-X^{1,\varepsilon}=O(\varepsilon)$,
and $X^\varepsilon-X^*-X^{1,\varepsilon}-X^{2,\varepsilon}=o(\varepsilon)$,
where  the convergence are both in $L^2(\Omega, C[0,T])$  sense.
In our case it is easy to check that the first- and second-order variational equations $X^{1,\varepsilon}$ and  $X^{2,\varepsilon}$ satisfy
\begin{equation}\label{equ 3.2}
  \left\{
 \begin{aligned}
dX^{1,\varepsilon}(t)&=\Big\{ b_x(t)X^{1,\varepsilon}(t)+\widetilde{\mathbb E}[b_\nu(t;\widetilde{X}^*(t))\widetilde{X}^{1,\varepsilon}(t)]+\delta b(t)\mathbbm{1}_{E_\varepsilon}(t)\Big\}dt\\
        &\quad+\Big\{ \sigma_x(t)X^{1,\varepsilon}(t)+\widetilde{\mathbb E}[\sigma_\nu(t;\widetilde{X}^*(t))\widetilde{X}^{1,\varepsilon}(t)]+\delta \sigma(t)\mathbbm{1}_{E_\varepsilon}(t)\Big\}dW(t)\\
        &\quad+\int_G\Big\{  \beta^-_x(t,e)X^{1,\varepsilon}(t-)+\widetilde{\mathbb E}[\beta^-_\nu(t,e;\widetilde{X}^*(t))\widetilde{X}^{1,\varepsilon}(t-)]\Big\}N_\lambda(de,dt),\ t\in[0,T],\\
X^{1,\varepsilon}(0)&=0,
\end{aligned}
\right.
\end{equation}
and
\begin{equation}\label{equ 3.3}
  \left\{
\begin{aligned}
dX^{2,\varepsilon}(t)&=\Big\{ b_x(t)X^{2,\varepsilon}(t)+\widetilde{\mathbb E}[b_\nu(t;\widetilde{X}^*(t))\widetilde{X}^{2,\varepsilon}(t)]
       +\frac{1}{2}\Big(b_{xx}(t)(X^{1,\varepsilon}(t))^2+\widetilde{\mathbb E}[b_{\nu a}(t;\widetilde{X}^*(t))\\
        &\qquad\ (\widetilde{X}^{1,\varepsilon}(t))^2] \Big)+\Big(\delta b_x(t)X^{1,\varepsilon}(t)+\widetilde{\mathbb E}[\delta b_\nu(t;\widetilde{X}^*(t))\widetilde{X}^{1,\varepsilon}(t)]\Big)\mathbbm{1}_{E_\varepsilon}(t)\Big\}dt\\
        &\quad+\Big\{ \sigma_x(t)X^{2,\varepsilon}(t)+\widetilde{\mathbb E}[\sigma_\nu(t;\widetilde{X}^*(t))\widetilde{X}^{2,\varepsilon}(t)]
         +\frac{1}{2}\Big(\sigma_{xx}(t)(X^{1,\varepsilon}(t))^2+\widetilde{\mathbb E}[\sigma_{\nu a}(t;\widetilde{X}^*(t)) \\
         &\qquad\ (\widetilde{X}^{1,\varepsilon}(t))^2] \Big)+\Big(\delta \sigma_x(t)X^{1,\varepsilon}(t)+\widetilde{\mathbb E}[\delta \sigma_\nu(t;\widetilde{X}^*(t))\widetilde{X}^{1,\varepsilon}(t)]\Big)\mathbbm{1}_{E_\varepsilon}(t)\Big\}dW(t)\\
         &\quad+\int_G\Big\{ \beta^-_x(t,e)X^{2,\varepsilon}(t-)+\widetilde{\mathbb E}[\beta^-_\nu(t,e;\widetilde{X}^*(t))\widetilde{X}^{2,\varepsilon}(t-)]\\
         &\quad+\frac{1}{2}\Big(\beta^-_{xx}(t,e)(X^{1,\varepsilon}(t-))^2+\widetilde{\mathbb E}[\beta_{\nu a}^-(t,e;\widetilde{X}^*(t))(\widetilde{X}^{1,\varepsilon}(t-))^2] \Big)\Big\}N_\lambda(de,dt),
         \quad t\in[0,T],\\
X^{2,\varepsilon}(0)&=0.
 \end{aligned}
 \right.
\end{equation}
Here\ \   $(\beta_x^-,\beta_{xx}^-)(t,e)=(\frac{\partial\beta}{\partial x},\frac{\partial^2\beta}{\partial x^2})(t,X^*(t-),P_{X^*(t)},e),$\ \
$(\beta_\nu^-,\beta_{\nu a}^-)(t,e;\widetilde{X}^*(t))=(\frac{\partial\beta}{\partial \nu},\frac{\partial^2\beta}{\partial \nu   \partial a}) (t,X^*(t-),$ $P_{X^*(t)},e;\widetilde{X}^*(t-)).
$

Obviously, under the Assumptions (A3.1)-(A3.3), the equation (\ref{equ 3.2}) and the equation (\ref{equ 3.3}) have unique solutions $\{X^{1,\varepsilon}(t)\}_{t\in[0,T]}$ and
$\{X^{2,\varepsilon}(t)\}_{t\in[0,T]}$. Moreover, their solutions satisfy the following estimates:
\begin{proposition}\label{pro 3.1}
Let the Assumptions (A3.1)-(A3.3)  hold true. For $\ell\geq1$, there exists a  constant $L_\ell>0$  depending only on $\ell$ such that
\begin{equation}\label{equ 3.4}
\begin{aligned}
&\mathrm{i)}\quad  \mathbb E\bigg[\sup_{t\in[0,T]}|X^{1,\varepsilon}
(t)|^{2\ell}\bigg]\leq L_\ell \varepsilon^\ell,
\quad  \mathbb E\bigg[\sup_{t\in[0,T]}
|X^{2,\varepsilon}(t)|^{2\ell}
\bigg]\leq L_\ell \varepsilon^{2\ell}; \\
&\mathrm{ii)}\quad  \mathbb E\bigg[\sup_{t\in[0,T]}|X^\varepsilon(t)-X^*(t)|^{2\ell}]\leq L_\ell \varepsilon^{\ell};\\
&\mathrm{iii)}\quad  \mathbb E\bigg[\sup_{t\in[0,T]}|X^\varepsilon(t)
-X^*(t)-X^{1,\varepsilon}(t)|^{2\ell}
\bigg]\leq L_\ell \varepsilon^{2\ell}.\\
\end{aligned}
\end{equation}
\end{proposition}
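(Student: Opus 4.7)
The plan is to attack the four bounds in the standard order (i) $X^{1,\varepsilon}$, (i) $X^{2,\varepsilon}$, (ii), (iii), with each estimate feeding into the next, and to apply throughout the same two-tool pattern: Itô's formula with jumps applied to $|\cdot|^{2\ell}$, then BDG on the martingale pieces, boundedness of the coefficient derivatives on the linear ``self'' terms, and Gronwall.

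For the bound $\mathbb{E}[\sup_t |X^{1,\varepsilon}(t)|^{2\ell}]\le L_\ell\varepsilon^\ell$ I would apply Itô's formula to $|X^{1,\varepsilon}|^{2\ell}$. The contributions from $b_x X^{1,\varepsilon}$, $\widetilde{\mathbb{E}}[b_\nu\widetilde X^{1,\varepsilon}]$, $\sigma_x X^{1,\varepsilon}$, $\widetilde{\mathbb{E}}[\sigma_\nu\widetilde X^{1,\varepsilon}]$, $\beta_x^- X^{1,\varepsilon}(t-)$ and $\widetilde{\mathbb{E}}[\beta_\nu^-\widetilde X^{1,\varepsilon}(t-)]$ produce, via the boundedness in (A3.2), the boundedness of $\beta_x^-,\beta_\nu^-$ by $C(1\wedge|e|)$, and Fubini on the auxiliary space, a $\int_0^t \mathbb{E}[\sup_{s\le r}|X^{1,\varepsilon}(s)|^{2\ell}]dr$ term suitable for Gronwall. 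The crucial forcing is only in the drift and diffusion, and the bounds
\[
\mathbb{E}\Big[\Big(\int_0^T|\delta b(t)|\mathbbm{1}_{E_\varepsilon}(t)dt\Big)^{2\ell}\Big]\le C\varepsilon^{2\ell},\qquad \mathbb{E}\Big[\Big(\int_0^T|\delta\sigma(t)|^2\mathbbm{1}_{E_\varepsilon}(t)dt\Big)^{\ell}\Big]\le C\varepsilon^{\ell}
\]
give the exact $\varepsilon^\ell$ scaling. The point singled out in the paper is essential here: because $\beta$ does not depend on control there is no $\mathbbm{1}_{E_\varepsilon}$-forcing in the jump term, so the jump contribution is purely linear and passes into the Gronwall loop rather than providing an $O(\varepsilon^{1/2})$-type source that would cap the decay at $\varepsilon$ as in (\ref{equ 1.3}).

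The bound on $X^{2,\varepsilon}$ follows by exactly the same Itô/BDG/Gronwall scheme, but now with inhomogeneous forcing built from $(X^{1,\varepsilon})^2$, $\widetilde{\mathbb{E}}[(\widetilde X^{1,\varepsilon})^2]$ and the products $\mathbbm{1}_{E_\varepsilon}(\delta b_x X^{1,\varepsilon}+\widetilde{\mathbb{E}}[\delta b_\nu \widetilde X^{1,\varepsilon}])$ (and the analogous diffusion and jump terms). By the first half of (i) the square terms have $2\ell$-moments of order $\varepsilon^{2\ell}$, and the spike-$\times$-$X^{1,\varepsilon}$ terms give, after Hölder on $\mathbbm{1}_{E_\varepsilon}$, $\varepsilon^{2\ell-1}\cdot\varepsilon\cdot\varepsilon^{\ell}=\varepsilon^{3\ell}$, which is lower order. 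Hence the forcing is $O(\varepsilon^{2\ell})$ and Gronwall yields $\mathbb{E}[\sup_t |X^{2,\varepsilon}|^{2\ell}]\le L_\ell\varepsilon^{2\ell}$. Part (ii) is the usual a priori estimate for the difference $X^\varepsilon-X^*$: the Lipschitz bounds on $b,\sigma,\beta$ in $(x,\nu)$ (uniform in $u$) give a linear equation for the difference with forcing $\delta b\mathbbm{1}_{E_\varepsilon}$, $\delta\sigma\mathbbm{1}_{E_\varepsilon}$, so the same BDG+Gronwall argument gives $\varepsilon^\ell$.

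For (iii) I would introduce $R^\varepsilon:=X^\varepsilon-X^*-X^{1,\varepsilon}$ and derive its dynamics by writing each coefficient difference as a first-order Taylor expansion around $(X^*,P_{X^*},u^*)$ plus a remainder, using the standard identity $\psi(X^\varepsilon,P_{X^\varepsilon})-\psi(X^*,P_{X^*})=\int_0^1\{\psi_x(X^*+\theta(X^\varepsilon-X^*),\ldots)(X^\varepsilon-X^*)+\widetilde{\mathbb{E}}[\psi_\nu(\ldots;\widetilde X^*+\theta(\widetilde X^\varepsilon-\widetilde X^*))(\widetilde X^\varepsilon-\widetilde X^*)]\}d\theta$ and then using the $C^{1,1}$ regularity of $\psi_x,\psi_\nu$ to bound the deviations $\psi_x(\ldots)-\psi_x(t)$, $\psi_\nu(\ldots;\cdot)-\psi_\nu(t;\cdot)$ by $C(|X^\varepsilon-X^*|+W_2(P_{X^\varepsilon},P_{X^*}))$. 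After separating the linear part $\psi_x R^\varepsilon+\widetilde{\mathbb{E}}[\psi_\nu\widetilde R^\varepsilon]$, the residual forcing decomposes into (a) quadratic remainder terms of size $|X^\varepsilon-X^*|^2$ (and the corresponding $W_2^2$ measure piece), whose $L^{2\ell}$-time-integral norm is $\le\mathbb{E}[\sup|X^\varepsilon-X^*|^{4\ell}]\le L\varepsilon^{2\ell}$ by (ii), and (b) spike-control cross-terms $\mathbbm{1}_{E_\varepsilon}(\delta\psi_x(X^\varepsilon-X^*)+\widetilde{\mathbb{E}}[\delta\psi_\nu(\widetilde X^\varepsilon-\widetilde X^*)])$, which by Hölder on $\mathbbm{1}_{E_\varepsilon}$ contribute $O(\varepsilon^{3\ell})$. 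The jump-term remainder is controlled analogously, exploiting that $\partial_x\beta,\partial_\nu\beta$ (and their second derivatives in (A3.3)) are bounded by $C(1\wedge|e|)$ so the $\lambda$-integrability is automatic. Feeding this into the same Itô/BDG/Gronwall machine yields $\mathbb{E}[\sup_t|R^\varepsilon|^{2\ell}]\le L_\ell\varepsilon^{2\ell}$. The step I expect to be most delicate is the measure-Taylor expansion in (iii): establishing the quadratic-remainder bound $|h(P_{X^\varepsilon})-h(P_{X^*})-\widetilde{\mathbb{E}}[h_\nu(P_{X^*};\widetilde X^*)(\widetilde X^\varepsilon-\widetilde X^*)]|\le CW_2(P_{X^\varepsilon},P_{X^*})^2$ for $C^{1,1}_b$ functions requires passing through the lifted function on $L^2(\overline{\mathscr F};\mathbb{R})$ and carefully handling the fact that the lift of $X^\varepsilon$ and $X^*$ must live on the same auxiliary space; this is exactly the technical point the authors highlight as adapting Proposition~4.3 of \cite{BLM} in the presence of jumps.
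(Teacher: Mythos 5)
Your proposal is correct and follows exactly the standard Itô--BDG--Gronwall scheme that the paper itself omits, deferring to Proposition~4.2 of \cite{BLM}; in particular you correctly identify both the source of the $\varepsilon^{\ell}$ scaling (the $\delta\sigma\mathbbm{1}_{E_\varepsilon}$ forcing in the diffusion) and the reason the control-independence of $\beta$ is essential (a spike-forced Poisson integral would only decay like $\varepsilon$ by the jump BDG inequality, as in (\ref{equ 1.3})). The only slip is attributive, not mathematical: the ``adaptation of Proposition~4.3 of \cite{BLM}'' that the authors highlight concerns Lemma~\ref{le 3.2}, not this proposition, whose analogue in \cite{BLM} is Proposition~4.2.
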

The proof is similar to Proposition 4.2 in \cite{BLM}. Hence, we omit it.

An extra assumption is the need to prove the following lemma.\\
\noindent $\underline{\text{Assumption}\ (\mathbf{A3.4})}$ Let
 $1+\beta_x(t,e)\geq\delta $,  $(t,e)\in[0,T]\times G,$ where $\delta $
 is some given positive constant.

\begin{lemma}\label{le 3.2}
Let the Assumptions (A3.1), (A3.2) and (A3.4) hold true and
let $(\overline{\Omega},\overline{\mathcal{F}},\overline{P})$
be an intermediate probability space and independent of space of space $(\Omega, \mathcal{F},P)$, and
 let $(\overline{\psi}_3(t,e))_{(t,e)\in[0,T]\times G}$,\\ $(\overline{\psi}_1(t))_{t\in[0,T]}$ be two progressively measurable
  stochastic
 processes defined on the product space $(\Omega\times\overline{\Omega},\mathcal{F}\times\overline{\mathcal{F}},P\otimes \overline{P})$
and $(\overline{\psi}_2(t))_{t\in[0,T]}$ be a progressively measurable stochastic process defined on
the space $(\overline{\Omega},\overline{\mathcal{F}},\overline{P})$. Moreover, assume
$(\overline{\psi}_i(t))_{t\in[0,T]},\ i=1,2,3$
satisfies the
following properties:

$$
\begin{aligned}
&\mathrm{a)}\ \text{for}\ e\in G,\ t\in[0,T],\ \    |\overline{\psi}_1(t)|\leq C, \ \ |\overline{\psi}_3(t,e)|\leq C(1\wedge|e|),\ P\otimes \overline{P}\text{-a.s.},\\
&\mathrm{b)}\ \text{for}\ \ell\geq1, \ \ \overline{\mathbb E}\Big[\sup_{t\in[0,T]}|\overline{\psi}_2(t)|^
{2\ell}\Big]\leq C_\ell.
\end{aligned}
$$
Then
\begin{equation}\label{equ 3.4-11}
\begin{aligned}
&\mathrm{i)}\  \mathbb E\bigg[\int_0^T\Big|\overline{
\mathbb E} [\overline{\psi}_1(t)
\overline{\psi}_2(t)\overline{X}^{1,
\varepsilon}(t) ]\Big|^4dt\bigg]\leq \varepsilon^2\rho(\varepsilon),\\
&\mathrm{ii)}\ \mathbb E\bigg[\int_0^T\int_G\Big|\overline{\mathbb E} [\overline{\psi}_3(t,e)
\overline{\psi}_2(t)\overline{X}^{1,\varepsilon}(t) ]\Big|^4\lambda(de)dt\bigg]\leq \varepsilon^2\rho(\varepsilon).
\end{aligned}
\end{equation}
\end{lemma}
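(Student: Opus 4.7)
The plan is to follow the scheme of Proposition 4.3 in \cite{BLM}, while inserting an additional It\^o-isometry step to handle the jump contributions to (\ref{equ 3.2}). Two ingredients drive the argument: Proposition \ref{pro 3.1}, which gives the sharp $\varepsilon^\ell$ moment bound on $\overline{X}^{1,\varepsilon}$, and the absolute continuity of the Lebesgue integral of an $L^1([0,T])$-density against $\mathbbm 1_{E_\varepsilon}$, from which the modulus $\rho(\varepsilon)$ will arise. Assumption (A3.4) is used to guarantee that the fundamental solution $\overline{\Phi}(t,s)$ of the homogeneous linear SDE underlying (\ref{equ 3.2}) is well-defined with all moments bounded.

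First, I would decompose $\overline{X}^{1,\varepsilon}(t)=\overline{X}^{\varepsilon}_{\rm dr}(t)+\overline{M}^\varepsilon(t)+\overline{R}^\varepsilon(t)$, where $\overline{X}^\varepsilon_{\rm dr}$ collects the Lebesgue integrals supported on $E_\varepsilon$ (i.e.\ the contributions of $\delta\overline{b}\mathbbm 1_{E_\varepsilon}$), $\overline{M}^\varepsilon$ is the stochastic integral $\int_0^t\overline{\Phi}(t,s)\delta\overline\sigma(s)\mathbbm 1_{E_\varepsilon}(s)d\overline W(s)$, and $\overline{R}^\varepsilon$ gathers the terms linear in $\overline{X}^{1,\varepsilon}$ together with the mean-field couplings $\widetilde{\mathbb E}[b_\nu\widetilde{X}^{1,\varepsilon}],\widetilde{\mathbb E}[\sigma_\nu\widetilde{X}^{1,\varepsilon}]$ and the (control-independent) jump integral against $\overline{N}_\lambda$. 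The drift piece obeys $\sup_t|\overline{X}^\varepsilon_{\rm dr}|\le C\varepsilon$ pathwise, so its contribution to both (i) and (ii) is trivially $O(\varepsilon^4)$; the remainder $\overline{R}^\varepsilon$ inherits from Proposition \ref{pro 3.1} and a Gronwall closure the bound $\overline{\mathbb E}[\sup_t|\overline{R}^\varepsilon|^{2\ell}]\le L_\ell\varepsilon^\ell$, so in the end its contribution is absorbed by self-improvement (bootstrap) of the very estimate we are proving.

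The delicate step is the diffusion term $\overline{M}^\varepsilon$. For fixed $(t,\omega)$, I would apply It\^o's martingale representation theorem to the random variable $\overline{\psi}_1(t,\omega,\cdot)\overline{\psi}_2(t,\cdot)\in L^2(\overline{\mathcal F}_t,\overline P)$ on the filtration generated by $(\overline W,\overline N_\lambda)$, yielding predictable integrands $h(\cdot;t,\omega)$ and $k(\cdot,\cdot;t,\omega)$ with $\overline{\mathbb E}[\int_0^T h^2(s;t,\omega)ds+\int_0^T\int_G k^2(s,e;t,\omega)\lambda(de)ds]\le C$ uniformly in $(t,\omega)$. It\^o isometry collapses $\overline{\mathbb E}[\overline{\psi}_1\overline{\psi}_2\overline{M}^\varepsilon(t)]$ to a Lebesgue integral of $h\,\overline\Phi\,\delta\overline\sigma$ against $\mathbbm 1_{E_\varepsilon}$, which Cauchy--Schwarz bounds by $(C\varepsilon)^{1/2}\rho_{t,\omega}(\varepsilon)^{1/2}$ with $\rho_{t,\omega}(\varepsilon):=\overline{\mathbb E}[\int_0^T h^2\mathbbm 1_{E_\varepsilon}ds]\to 0$ as $\varepsilon\to 0$ by the absolute continuity of the Lebesgue integral of the $L^1$ density $s\mapsto\overline{\mathbb E}[h^2(s;t,\omega)]$. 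Raising to the fourth power, integrating in $t$, taking $\mathbb E$, and applying dominated convergence to the uniformly bounded $\rho_{t,\omega}$ produces the claimed $\varepsilon^2\rho(\varepsilon)$ bound in (i).

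Part (ii) is proved along the same lines with $\overline{\psi}_3(t,e)$ in place of $\overline{\psi}_1(t)$; the extra $\int_G\lambda(de)$ on the outside is controlled by the weight $(1\wedge|e|)^2$ together with $\lambda(G)<\infty$. The main obstacle, which is where the proof is genuinely more technical than the purely Brownian case of \cite{BLM}, is the simultaneous handling of the $\overline N_\lambda$-integral appearing inside $\overline{R}^\varepsilon$ (whose coefficients $\partial_x\beta,\partial_\nu\beta$ only satisfy the $(1\wedge|e|)$ bound from (A3.3), so the reduction must use the $k$-part of the martingale representation and a second It\^o isometry on $\overline N_\lambda$) and the Gronwall closure required to eliminate $\overline{X}^{1,\varepsilon}$ from $\overline{R}^\varepsilon$ uniformly in $\omega$ and $e$, ensuring that a single deterministic modulus $\rho(\varepsilon)$ comes out at the end.
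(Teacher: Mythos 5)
Your proposal follows essentially the same route as the paper's proof: a variation-of-constants representation of $X^{1,\varepsilon}$ (the paper's $m(t)=e^{S(t)}$ and $n(t)=m(t)^{-1}$ play the role of your fundamental solution, with Assumption (A3.4) guaranteeing invertibility and all moments), martingale representation of the test random variable followed by It\^o isometry to turn $\overline{\mathbb E}\big[\overline{\psi}_1\overline{\psi}_2\cdot(\text{stochastic integral})\big]$ into a Lebesgue integral against $\mathbbm{1}_{E_\varepsilon}$, absolute continuity to produce the modulus $\rho(\varepsilon)$, and a Gronwall closure for the self-referential jump term. Two steps, however, need repair before the argument is complete.

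First, $\overline{\Phi}(t,s)$ is $\overline{\mathscr F}_t$-measurable rather than $\overline{\mathscr F}_s$-measurable, so $\int_0^t\overline{\Phi}(t,s)\delta\overline{\sigma}(s)\mathbbm{1}_{E_\varepsilon}(s)\,d\overline{W}(s)$ is not a stochastic integral of a predictable integrand and It\^o isometry cannot be applied to it as written. One must use the scalar factorization $\overline{\Phi}(t,s)=\overline{n}(t)\overline{m}(s)$ and fold $\overline{n}(t)$ into the random variable being represented --- the paper represents $\overline{\psi}_3(t,e')\overline{\psi}_2(t)\overline{n}(t)$, not $\overline{\psi}_3\overline{\psi}_2$ --- after checking that this product still has all moments bounded uniformly in $(t,e')$, which is the content of (\ref{equ 3.12}) and (\ref{equ 3.14}). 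Second, the claim that the remainder $\overline{R}^\varepsilon$ is absorbed because $\overline{\mathbb E}[\sup_t|\overline{R}^\varepsilon|^{2\ell}]\le L_\ell\varepsilon^\ell$ cannot work as stated: that moment bound only yields $\big(\overline{\mathbb E}[|\overline{\psi}_1\overline{\psi}_2\overline{R}^\varepsilon|]\big)^4\le C\varepsilon^2$, i.e. $O(\varepsilon^2)$ rather than the required $\varepsilon^2\rho(\varepsilon)$. The extra factor $\rho(\varepsilon)$ for the mean-field coupling terms $\widetilde{\mathbb E}[b_\nu\widetilde{X}^{1,\varepsilon}]$, $\widetilde{\mathbb E}[\sigma_\nu\widetilde{X}^{1,\varepsilon}]$, $\widetilde{\mathbb E}[\beta_\nu\widetilde{X}^{1,\varepsilon}]$ must come from re-applying the very statement being proved with $\widetilde{\psi}_2\equiv1$: part i) (available from the argument of \cite{BLM}) disposes of the $b_\nu,\sigma_\nu$ instances, while the $\beta_\nu$ instance of part ii) is exactly the self-referential term that the Gronwall inequality (\ref{equ 3.21}) closes. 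Your final paragraph gestures at this Gronwall closure, but the middle paragraph's bootstrap from the crude moment bound is the wrong mechanism and, taken literally, leaves the proof short of the stated estimate.
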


\begin{proof}
Under the Assumptions (A3.1), (A3.2) and (A3.4), the proof of i) follows that of Proposition 4.3 \cite{BLM}. Hence,
we mainly estimate ii). The proof of ii) is an adaptation of that for Proposition 4.3 \cite{BLM}. Let us state it in detail.
Denote
\begin{equation}\label{equ 3.5}
\begin{aligned}
S(t)&=\int_0^t\Big(-b_x(s)+\frac{1}{2}|\sigma_x(s)|^2+\int_G(\beta_x(s,e)-\mathrm{ln}(1+\beta_x(s,e)))\lambda(de)\Big) ds\\
    &\quad-\int_0^t\sigma_x(s)dW(s)-\int_0^t\int_G \mathrm{ln}(1+\beta_x(s,e))N_\lambda(de,ds),
\end{aligned}
\end{equation}
and consider $m(t)=e^{S(t)}$.  Obviously,  $(m(s))_{s\in[0,T]}$ satisfies\\
\begin{equation}\label{equ 3.6}
\left\{
\begin{aligned}
dm(t)&=m(t)\Big(-b_x(t)+|\sigma_x(t)|^2
+\int_G\frac{|\beta_x(t,e)|^2}{1+\beta_x(t,e)}\lambda(de)\Big)dt-m(t)\sigma_x(t)dW(t)\\
     &\quad-
     \int_Gm(t-)\frac{\beta^-_x(t,e)}
     {1+\beta^-_x(t,e)}N_\lambda(de,dt),\ t\in[0,T],\\
m(0)&=1.
\end{aligned}
\right.
\end{equation}

Let $n(t)=m(t)^{-1}=e^{-S(t)}$.  Due to $\beta_x(t,e)\geq\delta-1>-1$,  $(t,e)\in[0,T]\times G$, the boundness of $b_x, \sigma_x,\beta_x$ implies that,
for $\ell\geq1$,
\begin{equation}\label{equ 3.7}
\mathbb E\Big[\sup_{t\in[0,T]}\big(|n(t)|^\ell+|m(t)|^\ell\big)\Big]\leq C_\ell,
\end{equation}
where $C_\ell$ is a positive constant only depending on $\ell$.

On the other hand,  it follows from It\^{o}'s formula for semi-martingale with jumps (see Theorem 93 \cite{Situ}) that
\begin{equation}\label{equ 3.8}
\begin{aligned}
dX^{1,\varepsilon}(t)m(t)&=m(t)\Big(\widetilde{\mathbb E}[\sigma_\nu(t,\widetilde{X}^*(t))\widetilde{X}^{1,\varepsilon}(t)]+\delta \sigma(t)\mathbbm{1}_{E_\varepsilon}(t)\Big)dW(t)\\
&\quad+\int_Gm(t-)\frac{1}{1+\beta^-_x(t,e)}\widetilde{\mathbb E}[\beta_\nu^-(t,e;\widetilde{X}^*(t))\widetilde{X}^{1,\varepsilon}(t-)]N_\lambda(dt,de)\\
&\quad+\Big\{
m(t)\big(\widetilde{\mathbb E}[b_\nu(t;\widetilde{X}^*(t))\widetilde{X}^{1,\varepsilon}(t)]+\delta b(t)\mathbbm{1}_{E_\varepsilon}(t)\big)\\
&\quad-m(t)\sigma_x(t)\big(
\widetilde{\mathbb E}[\sigma_\nu(t;\widetilde{X}^*(t))\widetilde{X}^{1,\varepsilon}(t)]+\delta \sigma(t)\mathbbm{1}_{E_\varepsilon}(t)
\big) \\
&\quad-\int_Gm(t)\frac{\beta_x(t,e)}{1+\beta_x(t,e)}
\widetilde{\mathbb E}[\beta_\nu(t,e;\widetilde{X}^*(t))\widetilde{X}^{1,\varepsilon}(t)]\lambda(de)
\Big\}dt.
\end{aligned}
\end{equation}
Hence,
\begin{equation}\label{equ 3.9}
\begin{aligned}
X^{1,\varepsilon}(t)=n(t)\Theta_1^\varepsilon(t)+n(t)\Theta_2^\varepsilon(t)+\Theta_3^\varepsilon(t),
\end{aligned}
\end{equation}
where
\begin{equation}\label{equ 3.10}
\begin{aligned}
\Theta_1^\varepsilon(t)&:=\int_0^tm(s)\Big(\widetilde{\mathbb E}[\sigma_\nu(s;\widetilde{X}^*(s))\widetilde{X}^{1,\varepsilon}(s)]+\delta \sigma(s)\mathbbm{1}_{E_\varepsilon}(s)\Big)dW(s),\\
\Theta_2^\varepsilon(t)&:=\int_0^t\int_Gm(s-)\frac{1}{1+\beta^-_x(s,e)}\widetilde{\mathbb E}[\beta_\nu^-(s,e;\widetilde{X}^*(s))
\widetilde{X}^{1,\varepsilon}(s-)]N_\lambda(ds,de),\\
\Theta_3^\varepsilon(t)&:=n(t)\int_0^t\Big\{
m(s)\big(\widetilde{\mathbb E}[b_\nu(s;\widetilde{X}^*(s))\widetilde{X}^{1,\varepsilon}(s)]+\delta b(s)\mathbbm{1}_{E_\varepsilon}(s)\big)\\
&\qquad\qquad\quad-m(s)\sigma_x(s)\big(
\widetilde{\mathbb E}[\sigma_\nu(s;\widetilde{X}^*(s))\widetilde{X}^{1,\varepsilon}(s)]+\delta \sigma(s)\mathbbm{1}_{E_\varepsilon}(s)
\big) \\
&\qquad\qquad\quad-\int_Gm(s)\frac{\beta_x(s,e)}{1+\beta_x(s,e)}
\widetilde{\mathbb E}[\beta_\nu(s,e;\widetilde{X}^*(s))\widetilde{X}^{1,\varepsilon}(s)]\lambda(de)
\Big\}ds.
\end{aligned}
\end{equation}
We are now ready to calculate $\overline{\mathbb E}[\overline{\psi}_3(t,e')\overline{\psi}_2(t)\overline{X}^{1,\varepsilon}(t)]$ with the help of
(\ref{equ 3.9}) and (\ref{equ 3.10}).\\
For each given $e'\in G$,
\begin{equation}\label{equ 3.11}
\begin{aligned}
\overline{\mathbb E}[\overline{\psi}_3(t,e')\overline{\psi}_2(t)\overline{X}^{1,\varepsilon}(t)]&=
\overline{\mathbb E}[\overline{\psi}_3(t,e')
\overline{\psi}_2(t)\overline{n}(t)\overline{\Theta}_1^\varepsilon(t)]
+\overline{\mathbb E}[\overline{\psi}_3(t,e')
\overline{\psi}_2(t)\overline{n}(t)
\overline{\Theta}_2^\varepsilon(t)]\\
&\quad+\overline{\mathbb E}[\overline{\psi}_3(t,e')
\overline{\psi}_2(t)\overline{\Theta}_3^
\varepsilon(t)]
:=\Xi_1^\varepsilon(t,e')+\Xi_2^\varepsilon(t,e')
+\Xi_3^\varepsilon(t,e').
\end{aligned}
\end{equation}
Since, for $\ell\geq1,$
\begin{equation}\label{equ 3.12}
\begin{aligned}
&\overline{\mathbb {E}}\bigg[\sup_{t\in[0,T]}|\overline{\psi}_3(t,e')\overline{\psi}_2(t)\overline{n}(t)|^\ell]\leq C\mathbb \mathbb E\bigg[\sup_{t\in[0,T]}|\overline{\psi}_2(t)\overline{n}(t)|^\ell\bigg]\\
&\leq C\Big\{\mathbb E[\sup_{t\in[0,T]}|\overline{\psi}_2(t)|^{2\ell}]\Big\}^{\frac{1}{2}}\cdot
\Big\{\mathbb E[\sup_{t\in[0,T]}|\overline{n}(t)|^{2\ell}]\Big\}^{\frac{1}{2}}\leq C_\ell,
\end{aligned}
\end{equation}
where $C_\ell$ does not depends on $e'$ because of $|\psi_3(t,e')|\leq C(1\wedge|e'|)\leq C,$
and observe that $\mathbb{F}=\mathbb{F}^W\bigvee \mathbb{F}^N$, according to the martingale
representation theorem for jump process (see \cite{TL}), we have, for each  $t\in[0,T],\ e'\in G$, there
exists a unique pair
$(\overline{\theta}_{\cdot,t,e'},\overline{\gamma}_{\cdot,t,e'})\in \mathcal{H}^2_\mathbb{\overline{F}}(0,t)\times \overline{K}_\lambda^2(0,t),$
such that $\overline{P}$-a.s.,
\begin{equation}\label{equ 3.13}
\begin{aligned}
\overline{\psi}_3(t,e')\overline{\psi}_2(t)\overline{n}(t)=
\mathbb E[\overline{\psi}_3(t,e')\overline{\psi}_2(t)\overline{n}(t)]
+\int_0^t\overline{\theta}_{s,t,e'}d\overline{W}(s)
+\int_0^t\overline{\gamma}_{s,t,e'}(e)\overline{N}_\lambda(de,ds).
\end{aligned}
\end{equation}

We argue that, for $\ell\geq1$ and for each $e'\in G$,
there exists a constant $C_\ell>0$ depending on $\ell$, but independent of $e'$, such that
\begin{equation}\label{equ 3.14}
\begin{aligned}
\overline{\mathbb{E}}\Bigg[\bigg(\int_0^t|\overline{\theta}_{s,t,e'}|^2ds
\bigg)^{\frac{\ell}{2}}+
\bigg(\int_0^t\int_G|\overline{\gamma}_{s,t,e'}(e)|^2
\lambda(de)ds\bigg)^{\frac{\ell}{2}}\Bigg]\leq C_\ell.
\end{aligned}
\end{equation}
Indeed, for $\ell\geq2$, from Burkholder-Davis-Gundy, Doob's maximal inequality and H\"{o}lder inequality, we have,
for $t\in[0,T],$
\begin{equation}\label{equ 3.15}
\begin{aligned}
&\overline{\mathbb{E}}\Big[\Big(\int_0^t|\overline{\theta}_{s,t,e'}|^2ds
+\int_0^t\int_G|\overline{\gamma}_{s,t,e'}(e)|^2\overline{N}(de,ds)\Big)^{\frac{\ell}{2}}\Big]\\
&\leq C_\ell \overline{\mathbb E}\Big[\sup_{s\in[0,t]}\Big| \int_0^s\overline{\theta}_{\tau,s,e'} d\overline{W}(\tau)+\int_0^s\int_G\overline{\gamma}_{\tau,s,e'}(e)\overline{N}_\lambda(de,d\tau)\Big|^\ell\Big]\\
&\leq C_\ell(\frac{\ell}{\ell-1})^\ell
\overline{\mathbb{E}}\Big[|\int_0^t\overline{\theta}_{s,t,e'}d\overline{W}(s)
+\int_0^t\int_G\overline{\gamma}_{s,t,e'} (e)\overline{N}_\lambda(de,ds)|^\ell \Big]\\
&\leq C_\ell \Big\{
\overline{\mathbb{E}}\Big[|\overline{\psi}_3(t,e')\overline{\psi}_2(t)\overline{n}(t)|^\ell\Big]
+|\mathbb{E}[\overline{\psi}_3(t,e')\overline{\psi}_2(t)\overline{n}(t)]|^\ell\Big\}\leq C_\ell,
\end{aligned}
\end{equation}
where $ C_\ell $  is independent of $e'$ because of (\ref{equ 3.12}).\\
Clearly, (\ref{equ 3.15}) implies, for each $e'\in G$,
$$ \overline{\mathbb{E}}\Big[\Big(\int_0^t\int_G|\overline{\gamma}_{s,t,e'}(e)|^2\overline{N}(de,ds)\Big)^{\frac{\ell}{2}}\Big]\leq C_\ell.$$
Recall Lemma 3.1 \cite {LW2}, it follows
\begin{equation}\label{equ 3.16}
\overline{\mathbb E}\Big[\Big(\int_0^t\int_G|\overline{\gamma}_{s,t,e'}(e)|^2\lambda(de)ds\Big)^{\frac{\ell}{2}}\Big]
\leq(\frac{\ell}{2})^{\frac{\ell}{2}} \overline{\mathbb {E}}\Big[\Big(\int_0^t\int_G|\overline{\gamma}_{s,t,e'}|^2\overline{N}(de,ds)\Big)^{\frac{\ell}{2}}\Big]\leq C_\ell.
\end{equation}
Hence, for $\ell\geq2,$ (\ref{equ 3.14}) holds true.\\
If $1\leq \ell<2$,  for each $e'\in G$, the fact $(\overline{\theta}_{\cdot,t,e'},\overline{\gamma}_{\cdot,t,e'})\in \mathcal{H}^2_\mathbb{\overline {F}}(0,t)\times \overline{K}_\lambda^2(0,t)$
and H\"{o}lder inequality allow to show (\ref{equ 3.14}).

We now estimate $\Xi_1^\varepsilon(t,e'), \Xi_2^\varepsilon(t,e'), \Xi_3^\varepsilon(t,e')$ one after another.

First, as for $\Xi_1^\varepsilon(t,e')$, following (\ref{equ 3.13}) we have
$$
\begin{aligned}
\Xi_1^\varepsilon(t,e')=\overline{\mathbb E}\Big[
\int_0^t\overline{\theta}_{s,t,e'}\Big(\overline{m}(s)\widetilde{\mathbb E}[\overline{\sigma}_\nu(s;\widetilde{X}^*(s))\widetilde{X}^{1,\varepsilon}(s)]
+\overline{m}(s)\delta\overline{\sigma}(s)\mathbbm{1}_{E_\varepsilon}(s)\Big)ds\Big].
\end{aligned}
$$
From this, the boundness of $\delta \sigma$  and H\"{o}lder inequality, it yields\\
$$
\begin{aligned}
&|\Xi_1^\varepsilon(t,e')|^2\\
&\leq 2\overline{\mathbb E}\Big[\Big(\int_0^t|\overline{\theta}_{s,t,e'}\overline{m}(s)
\widetilde{\mathbb E}[\overline{\sigma}_\nu(s;\widetilde{X}^*(s))\widetilde{X}^{1,\varepsilon}(s)]|ds\Big)^2\Big]
+2\overline{\mathbb E}\Big[\Big(\int_0^t|\overline{\theta}_{s,t,e'}\overline{m}(s)
\delta\overline{\sigma}(s)\mathbbm{1}_{E_\varepsilon}(s)|ds\Big)^2\Big]\\
&\leq 2\Big\{\overline{\mathbb E}[(\int_0^t|\overline{\theta}_{s,t,e'}|^2ds)^3]\Big\}^\frac{1}{3}\cdot
\Big\{\overline{\mathbb E}\Big[\sup_{s\in[0,T]}|\overline{m}(s)|^{12}\Big]\Big\}^\frac{1}{6}\cdot
\Big\{\overline{\mathbb E}[(\int_0^t|\widetilde{\mathbb E}
[\overline{\sigma}_\nu(s;\widetilde{X}^*(s))\widetilde{X}^{1,\varepsilon}(s)]|^3ds)^\frac{4}{3}]\Big\}^\frac{1}{2}\\
&\quad +2\varepsilon \Big\{\overline{\mathbb E}\Big[\sup_{s\in[0,T]}|\overline{m}(s)|^{4}\Big]\Big\}^\frac{1}{2}\cdot
\Big\{\overline{\mathbb E}[(\int_0^t|\mathbbm{1}_{E_\varepsilon}(s)\overline{\theta}_{s,t,e'}|^2ds)^2]\Big\}^\frac{1}{2}.
\end{aligned}
$$
Hence, thanks to (\ref{equ 3.7}), (\ref{equ 3.14}), there exists a constant $C>0$ independent of $e'$ such that
$$
\begin{aligned}
|\Xi_1^\varepsilon(t,e')|^4&\leq C \overline{\mathbb E}[\int_0^t|\widetilde{\mathbb E}
[\overline{\sigma}_\nu(s;\widetilde{X}^*(s))\widetilde{X}^{1,\varepsilon}(s)]|^4ds]+C\varepsilon^2\rho^*(\varepsilon),
\end{aligned}
$$
where $\rho^*(\varepsilon):=\overline{\mathbb E}[(\int_0^t|\mathbbm{1}_{E_\varepsilon}(s)\overline{\theta}_{s,t,e'}|^2ds)^2]$.
Obviously, the Dominated Convergence Theorem implies $\rho^*(\varepsilon)\rightarrow0$ as $\varepsilon\rightarrow0$.
Then it follows $\lambda(G)<\infty$ and (\ref{equ 3.4-11})-i)
that
\begin{equation}\label{equ 3.17}
\begin{aligned}
\int_0^r\int_G\mathbb{E}|\Xi_1^\varepsilon(t,e')|^4\lambda(de')dt
&\leq C\int_0^r
\Big(\int_0^t\overline{\mathbb E}[|\widetilde{\mathbb E}[\overline{\sigma}_\nu(s;\widetilde{X}^*(s))\widetilde{X}^{1,\varepsilon}(s)]|^4]ds\Big)dt
+C\varepsilon^2\rho^*(\varepsilon)
\leq \varepsilon^2\rho(\varepsilon).
\end{aligned}
\end{equation}

Second, we now pay attention to $\Xi_2^\varepsilon(t,e')$. Due to
$$
\Xi_2^\varepsilon(t,e')=\overline{\mathbb E}\bigg[\int_0^t\int_G\overline{\gamma}_{s,t,e'}(e)\overline{m}(s)
\widetilde{\mathbb E}[\overline{\beta}_\nu(s,e;\widetilde{X}^*(s))\widetilde{X}^{1,\varepsilon}(s)]\frac{1}{1+\overline{\beta}_x(s,e)}\lambda(de)ds
\bigg],
$$
and  from (\ref{equ 3.7}), (\ref{equ 3.14})  we get
$$
\begin{aligned}
|\Xi_2^\varepsilon(t,e')|^2&\leq C
\Big\{\overline{\mathbb E}[\int_0^t\int_G|\widetilde{\mathbb E}[\overline{\beta}_\nu(s,e;\widetilde{X}^*(s))
\widetilde{X}^{1,\varepsilon}(s)]|^4\lambda(de)ds]\Big\}^\frac{1}{2}\cdot\\
&\qquad\qquad\qquad\ \Big\{\overline{\mathbb E}[(\int_0^t\int_G|\overline{\gamma}_{s,t,e'}(e)|^2\lambda(de)ds)^4]\Big\}^\frac{1}{4}\cdot
\Big\{\overline{\mathbb E}\Big[\sup_{s\in[0,T]}|\overline{m}(s)|^{8}\Big]\Big\}^\frac{1}{4}.
\end{aligned}
$$
Hence,
\begin{equation}\label{equ 3.18}
\begin{aligned}
\int_0^r\int_G\mathbb E[|\Xi_2^\varepsilon(t,e')|^4]\lambda(de')dt
\leq C\int_0^r\overline{\mathbb E}[\int_0^t\int_G|\widetilde{\mathbb E}[\overline{\beta}_\nu(s,e;\widetilde{X}^*(s))
\widetilde{X}^{1,\varepsilon}(s)]|^4\lambda(de)ds]dt.
\end{aligned}
\end{equation}

Third, as for $\Xi_3^\varepsilon(t,e')$, since $|\psi_3(t,e')|\leq C(1\wedge|e'|)\leq C$, we have
\begin{equation}\label{equ 3.19}
\begin{aligned}
|\Xi_3^\varepsilon(t,e')|&\leq C\overline{\mathbb E}\bigg[|\overline{\psi}_2(t)\overline{\Theta}_3
^\varepsilon(t)|\bigg]\\
&\leq C \bigg\{ \overline{\mathbb E}[|\overline{\psi}_2(t)\overline{n}(t)\int_0^t(\overline{m}(s)\widetilde{\mathbb E}[\overline{b}_\nu(s;\widetilde{X}^*(s))\widetilde{X}^{1,\varepsilon}(s)]
+\overline{m}(s)\delta \overline{b}(s)\mathbbm{1}_{E_\varepsilon}(s))ds| ]\\
&+\overline{\mathbb E} [|\overline{\psi}_2(t)\overline{n}(t)
\int_0^t(\overline{m}(s)\overline{\sigma}_x(s)
\widetilde{\mathbb E} [\overline{\sigma}_\nu(s;\widetilde{X}^*(s)
)\widetilde{X}^{1,\varepsilon}(s) ]
+\overline{m}(s)\overline{\sigma}_x(s)\delta \overline{\sigma}(s)\mathbbm{1}_{E_\varepsilon}(s))ds| ]\\
&+\overline{\mathbb E} [|\overline{\psi}_2(t)
\overline{n}(t)\int_0^t\int_G\overline{m}(s)
\frac{\overline{\beta}_x(s,e)}{1+\overline{\beta}_x(s,e)}
\widetilde{\mathbb E}[\overline{\beta}_\nu(s,e;
\widetilde{X}^*(s))\widetilde{X}^{1,\varepsilon}
(s)]\lambda(de)ds| ]\bigg\}.
\end{aligned}
\end{equation}

The boundness of $b, \sigma, \sigma_x$, (\ref{equ 3.7}) and the assumption $\overline{\mathbb E}[\sup_{t\in[0,T]}|\overline{\psi}_2(t)|^{2\ell}]\leq C_\ell,\
\ell\geq1$
allow to show
\begin{equation}\label{equ 3.19-1}
\begin{aligned}
\overline{\mathbb E}\bigg[|\overline{\psi}_2(t)\overline{n}(t)
\int_0^t\overline{m}(s)(\delta \overline{b}(s)+ \overline{\sigma}_x(s)\delta \overline{\sigma}(s))
\mathbbm{1}_{E_\varepsilon}(s)ds|\bigg]\leq C\varepsilon.
\end{aligned}
\end{equation}
On the other hand, notice that
$$
\begin{aligned}
&\mathrm{i)}\ \overline{\mathbb E}\bigg[|\overline{\psi}_2(t)\overline{n}(t)
\int_0^t\overline{m}(s)
\widetilde{\mathbb E}[\overline{b}_\nu(s;\widetilde{X}^*(s))
\widetilde{X}^{1,\varepsilon}(s)]ds|\bigg]\\
&\quad\leq  \bigg\{
\overline{\mathbb E}\big[\int_0^t|\widetilde{\mathbb E}[\overline{b}_\nu(s;\widetilde{X}^*(s))
\widetilde{X}^{1,\varepsilon}(s)]|^2ds\big]
\bigg\}^{\frac{1}{2}}\cdot
\Big\{\overline{\mathbb E}\big[\sup_{t\in[0,T]}|\overline{\psi}_2(t)
\overline{n}(t)|^4\big]\Big\}^{\frac{1}{4}}\cdot
\Big\{\overline{\mathbb E}\big[\sup_{t\in[0,T]}|\overline{m}(t)|^4\big]\Big\}^{\frac{1}{4}}\\
&\quad\leq  C \Big\{
\overline{\mathbb E}\big[\int_0^t|\widetilde{\mathbb E}\big[\overline{b}_\nu(s;\widetilde{X}^*(s))
\widetilde{X}^{1,\varepsilon}(s)]|^2ds\big]\Big\}^{
\frac{1}{2}};\\
&\mathrm{ii)}\ \overline{\mathbb E}\Big[|\overline{\psi}_2(t)\overline{n}(t)
\int_0^t\overline{m}(s)\overline{\sigma}_x(s)
\widetilde{\mathbb E}[\overline{\sigma}_\nu
(s;\widetilde{X}^*(s))\widetilde{X}^{1,\varepsilon}(s)]ds|\Big]
\leq  C \Big\{
\overline{\mathbb E} [\int_0^t|\widetilde{\mathbb E} [\overline{\sigma}_
\nu(s;\widetilde{X}^*(s))\widetilde{X}^{1,
\varepsilon}(s) ]|^2ds\big]\Big\}^{\frac{1}{2}};\\
&\mathrm{iii)}\ \overline{\mathbb E}\Big[|\overline{\psi}_2(t)\overline{n}(t)
\int_0^t\int_G\overline{m}(s)
\frac{\overline{\beta}_x(s,e)}
{1+\overline{\beta}_x(s,e)}
\widetilde{\mathbb E}\big[\overline{\beta}_\nu(s,e;
\widetilde{X}^*(s))\widetilde{X}^{1,\varepsilon}(s)]\lambda(de)ds|\Big]\\
&\quad\leq C \Big\{
\overline{\mathbb E}[\int_0^t\int_G|\widetilde{\mathbb E}[\overline{\beta}_\nu(s,e;\widetilde{X}^*(s))
\widetilde{X}^{1,\varepsilon}(s)]|^2\lambda(de)ds\big]\Big\}^{\frac{1}{2}}.\\
\end{aligned}
$$
Combining (\ref{equ 3.19}), (\ref{equ 3.19-1}) and the above i), ii), iii), we know that there exists a constant $C>0$ independent of $e'$, such that
\begin{equation}\label{equ 3.20}
\begin{aligned}
|\Xi_3^\varepsilon(t,e')|^4&\leq C \varepsilon^4+C\overline{\mathbb E}[\int_0^t|\widetilde{\mathbb E}[\overline{b}_\nu(s;\widetilde{X}^*(s))\widetilde{X}^{1,\varepsilon}(s)]|^4ds]
+C\overline{\mathbb E}[\int_0^t|\widetilde{\mathbb E}[\overline{\sigma}_\nu(s;\widetilde{X}^*(s))\widetilde{X}^{1,\varepsilon}(s)]|^4ds]\\
&\quad+C\overline{\mathbb E}[\int_0^t\int_G|\widetilde{\mathbb E}[\overline{\beta}_\nu(s,e;\widetilde{X}^*(s))\widetilde{X}^{1,\varepsilon}(s)]|^4\lambda(de)ds].
\end{aligned}
\end{equation}
Hence, from (\ref{equ 3.4-11})-i)
\begin{equation}\label{equ 3.20-1}
\begin{aligned}
&\int_0^r\int_G\mathbb{E}|\Xi_3^\varepsilon(t,e')|^4\lambda(de')dt\\
&\quad\leq \varepsilon^2\rho(\varepsilon)+C\int_0^r\int_G
\mathbb{\overline{E}}\Big[\int_0^t\int_G|\widetilde{\mathbb E}[\overline{\beta}_\nu(s,e;\widetilde{X}^*(s))\widetilde{X}^{1,\varepsilon}(s)]|^4
\lambda(de)ds\Big]\lambda(de')dt\\
&\quad\leq\varepsilon^2\rho(\varepsilon)+C\lambda(G)\int_0^r\mathbb{\overline{E}}\Big[\int_0^t\int_G|\widetilde{\mathbb E}[\overline{\beta}_\nu(s,e;\widetilde{X}^*(s))\widetilde{X}^{1,\varepsilon}(s)]|^4
\lambda(de)ds\Big]dt.
\end{aligned}
\end{equation}

\noindent Thanks to (\ref{equ 3.11}), (\ref{equ 3.17}), (\ref{equ 3.18}), (\ref{equ 3.20-1}), we have
\begin{equation}\label{equ 3.21}
\begin{aligned}
&\mathbb E\Big[\int_0^r\int_G|\overline{\mathbb E}[\overline{\psi}_3(s,e')\overline{\psi}_2(s)\overline{X}^{1,\varepsilon}(s)]|^4\lambda(de')ds\Big]\\
&\leq\int_0^r\int_G \mathbb{E}|\Xi_1^\varepsilon(s,e')+\Xi_2^\varepsilon(s,e')+\Xi_3^\varepsilon(s,e') |^4\lambda(de')ds\\
&\leq \varepsilon^2\rho(\varepsilon)+C\int_0^r
\mathbb E\bigg[\int_0^t\int_G|\widetilde{\mathbb E}[\beta_\nu(s,e;\widetilde{X}^*(s))
\widetilde{X}^{1,\varepsilon}(s)]|^4\lambda(de)ds
\bigg]dt.
\end{aligned}
\end{equation}
Notice that $(\overline{\Omega},\overline{\mathcal{F}},\overline{P})$ is an intermediate probability space. So if we take
 $ \widetilde{\psi}_3(s,e)=\beta_\nu(s,e;\widetilde{X}^*(s)),\ \widetilde{\psi}_2(s)=1$, the Gronwall inequality can show,
 for $t\in[0,T]$,
 $$
 \mathbb E\bigg[\int_0^t\int_G|\widetilde{\mathbb E}[\beta_\nu(s,e;\widetilde{X}^*(s))
\widetilde{X}^{1,\varepsilon}(s)]
|^4\lambda(de)ds\bigg]\leq \varepsilon^2\rho(\varepsilon),
 $$
which and (\ref{equ 3.21}) imply the desired result, i.e., ii) of (\ref{equ 3.4-11}).
\end{proof}

\begin{corollary}\label{cor 3.3}
In (\ref{equ 3.4-11}), if taking $\widetilde{\psi}_2(t)=1$,
$ \widetilde{\psi}_1(t)=b_\nu(t;\widetilde{X}^*(t)), \sigma_\nu(t;\widetilde{X}^*(t))$ and
$\widetilde{\psi}_3(t,e)=\beta_\nu(t,e;\widetilde{X}^*(t))$, separately, one has
\begin{equation}\label{equ 3.23}
\begin{aligned}
 & \mathrm{i)}\quad \mathbb E\bigg[\int_0^T |\widetilde{\mathbb E}[b_\nu(s;\widetilde{X}^*(s))
\widetilde{X}^{1,\varepsilon}(s)]|^4ds\bigg]
+ \mathbb E\bigg[\int_0^T |\widetilde{\mathbb E}[\sigma_\nu(s;\widetilde{X}^*(s))
\widetilde{X}^{1,\varepsilon}(s)]|^4ds\bigg]
\leq \varepsilon^2\rho(\varepsilon);\\
 &\mathrm{ ii)}\quad  \mathbb E\bigg[\int_0^T\int_G|\widetilde{\mathbb E}[\beta_\nu(s,e;\widetilde{X}^*(s))
\widetilde{X}^{1,\varepsilon}(s)]|^4\lambda(de)ds
\bigg]\leq \varepsilon^2\rho(\varepsilon).
\end{aligned}
\end{equation}
\end{corollary}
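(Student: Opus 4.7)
The plan is to deduce Corollary \ref{cor 3.3} as a direct specialization of Lemma \ref{le 3.2}: the entire task reduces to verifying that the proposed triples $(\widetilde{\psi}_1, \widetilde{\psi}_2, \widetilde{\psi}_3)$ satisfy the hypotheses a)--b) of that lemma (read on the $(\Omega \times \widetilde{\Omega}, \mathcal{F} \times \widetilde{\mathcal{F}}, P \otimes \widetilde{P})$ product rather than on the $(\Omega \times \overline{\Omega}, \mathcal{F} \times \overline{\mathcal{F}}, P \otimes \overline{P})$ product) and then invoking (\ref{equ 3.4-11}). No new analytical work is needed once the interchangeability of the two intermediate spaces is acknowledged.

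First I would verify the bounds one by one. The constant choice $\widetilde{\psi}_2(t) \equiv 1$ makes condition b) of Lemma \ref{le 3.2} trivial with $C_\ell = 1$ for every $\ell \geq 1$. For $\widetilde{\psi}_1(t) = b_\nu(t; \widetilde{X}^*(t))$ or $\widetilde{\psi}_1(t) = \sigma_\nu(t; \widetilde{X}^*(t))$, Assumption (A3.2)(iii) bounds $b_\nu$ and $\sigma_\nu$ uniformly in $(x,\nu,u)$, so $|\widetilde{\psi}_1(t)| \leq C$ almost surely, which is exactly condition a). For $\widetilde{\psi}_3(t,e) = \beta_\nu(t,e; \widetilde{X}^*(t))$, the same assumption gives $|\partial_\nu \beta(t,\cdot,\cdot,e)| \leq C(1 \wedge |e|)$ with $C$ independent of $(e,u)$, again matching a).

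With the hypotheses in place, i) of (\ref{equ 3.4-11}) applied first with $\widetilde{\psi}_1 = b_\nu(\cdot; \widetilde{X}^*(\cdot))$ and then with $\widetilde{\psi}_1 = \sigma_\nu(\cdot; \widetilde{X}^*(\cdot))$ produces the two summands of (i), while ii) of (\ref{equ 3.4-11}) with $\widetilde{\psi}_3 = \beta_\nu(\cdot,\cdot; \widetilde{X}^*(\cdot))$ produces (ii). The single point to check --- less an obstacle than a bookkeeping verification --- is that Lemma \ref{le 3.2}, formulated on the product with $\overline{\Omega}$, transfers verbatim to the product with $\widetilde{\Omega}$. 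This is immediate from the symmetric construction of the two intermediate spaces: each carries an independent copy of $(W, N_\lambda)$ and of the associated state trajectory, and every ingredient of the proof of Lemma \ref{le 3.2} --- the factorization $X^{1,\varepsilon} = n\Theta_1^\varepsilon + n\Theta_2^\varepsilon + \Theta_3^\varepsilon$ in (\ref{equ 3.9}), the martingale representation on the auxiliary space, and the BDG and H\"older estimates --- is stable under renaming of that space. Thus the corollary follows by direct substitution, with the same $\rho(\varepsilon)$ controlling the right-hand sides.
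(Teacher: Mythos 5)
Your proposal is correct and coincides with the paper's treatment: the paper offers no separate proof of Corollary \ref{cor 3.3}, presenting it precisely as the specialization of Lemma \ref{le 3.2} to $\widetilde{\psi}_2\equiv 1$, $\widetilde{\psi}_1=b_\nu,\sigma_\nu$ and $\widetilde{\psi}_3=\beta_\nu$, whose hypotheses hold by Assumption (A3.2)(iii) exactly as you verify. Your remark on the interchangeability of the intermediate spaces is the only point worth making explicit, and the paper itself relies on it tacitly (indeed the closing Gronwall step in the proof of Lemma \ref{le 3.2} already uses the choice $\widetilde{\psi}_3=\beta_\nu$, $\widetilde{\psi}_2=1$).
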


\begin{proposition}\label{pro 3.4}
Let the Assumptions (A3.1)-(A3.4) hold true, then
\begin{equation}\label{equ 3.24}
\mathbb E[\sup_{t\in[0,T]}|X^\varepsilon(t)
-X^*(t)-X^{1,\varepsilon}(t)
-X^{2,\varepsilon}(t)|^{2}]
\leq\varepsilon^{2}\rho(\varepsilon).
\end{equation}
\end{proposition}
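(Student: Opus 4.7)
The plan is to set $\xi^\varepsilon(t) := X^\varepsilon(t) - X^*(t) - X^{1,\varepsilon}(t) - X^{2,\varepsilon}(t)$ and to derive an SDE satisfied by $\xi^\varepsilon$ whose linear part is Lipschitz in $\xi^\varepsilon$ and whose forcing terms are of order $\varepsilon^2\rho(\varepsilon)$ in the appropriate $L^2$-norms. Applying It\^{o}'s formula to $|\xi^\varepsilon|^2$, together with BDG for both the Brownian and compensated Poisson martingales, will then close the argument by Gronwall's inequality. To derive that SDE, I would Taylor-expand each coefficient $\psi\in\{b,\sigma,\beta\}$ at $(X^*(t),P_{X^*(t)})$ to second order, simultaneously in the state and in the measure variable, using the formula for derivatives on $\mathcal{P}_2(\mathbb{R})$ recalled in Section 2 and the regularity in (A3.2)-(A3.3). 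Writing $\Delta^\varepsilon := X^\varepsilon - X^* = X^{1,\varepsilon} + X^{2,\varepsilon} + \xi^\varepsilon$ and splitting the control as $u^\varepsilon = u^*\mathbbm{1}_{E_\varepsilon^c}+v\mathbbm{1}_{E_\varepsilon}$ produces a zeroth-order term $\psi(t,X^*,P_{X^*},u^*)+\delta\psi(t)\mathbbm{1}_{E_\varepsilon}$, a linear part in $\Delta^\varepsilon$, quadratic terms in $\Delta^\varepsilon$, plus a third-order Taylor remainder and Lipschitz residuals coming from evaluating second derivatives along an intermediate point.

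Substituting $\Delta^\varepsilon = X^{1,\varepsilon}+X^{2,\varepsilon}+\xi^\varepsilon$ into the expansion and subtracting the equations \eqref{equ 3.2}, \eqref{equ 3.3} for $X^{1,\varepsilon}$ and $X^{2,\varepsilon}$, the quadratic-in-$X^{1,\varepsilon}$ terms and the spike$\times X^{1,\varepsilon}$ terms cancel, and $\xi^\varepsilon$ satisfies an equation of the schematic form
\begin{equation*}
d\xi^\varepsilon(t)=\bigl[b_x(t)\xi^\varepsilon(t)+\widetilde{\mathbb E}[b_\nu(t;\widetilde{X}^*(t))\widetilde{\xi}^\varepsilon(t)]+\mathcal R_b^\varepsilon(t)\bigr]dt+\bigl[\sigma_x(t)\xi^\varepsilon(t)+\widetilde{\mathbb E}[\sigma_\nu(t;\widetilde{X}^*(t))\widetilde{\xi}^\varepsilon(t)]+\mathcal R_\sigma^\varepsilon(t)\bigr]dW(t)+\int_G\!\bigl[\beta_x^-(t,e)\xi^\varepsilon(t-)+\widetilde{\mathbb E}[\beta_\nu^-(t,e;\widetilde{X}^*(t))\widetilde{\xi}^\varepsilon(t-)]+\mathcal R_\beta^\varepsilon(t,e)\bigr]N_\lambda(de,dt),
\end{equation*}
with $\xi^\varepsilon(0)=0$. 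The residuals $\mathcal R_b^\varepsilon,\mathcal R_\sigma^\varepsilon,\mathcal R_\beta^\varepsilon$ group into four families: (a) third-order Taylor remainders controlled by $C|\Delta^\varepsilon|^3$; (b) Lipschitz residuals of the form $\bigl(\psi_{xx}(t,X^*+\theta\Delta^\varepsilon,\cdot)-\psi_{xx}(t)\bigr)(X^{1,\varepsilon})^2$ and their mean-field analogues; (c) spike$\times X^{2,\varepsilon}$ and mixed second-derivative $\mathbbm{1}_{E_\varepsilon}$ contributions; and (d) residuals containing $\widetilde{\mathbb E}[\psi_\nu(t;\widetilde{X}^*)\widetilde{X}^{2,\varepsilon}]$ together with mixed- and pure-measure second-derivative terms that were absent from \eqref{equ 3.3}.

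The standard squaring-plus-BDG procedure, combined with Jensen's inequality on the $\widetilde{\mathbb E}$ factors and the uniform Lipschitz and boundedness assumptions, gives
\begin{equation*}
\mathbb E\!\bigl[\sup_{s\leq t}|\xi^\varepsilon(s)|^2\bigr]\leq C\!\int_0^t\!\mathbb E[|\xi^\varepsilon(s)|^2]\,ds + \mathbb E\!\int_0^T\!\bigl(|\mathcal R_b^\varepsilon|^2+|\mathcal R_\sigma^\varepsilon|^2\bigr)ds + \mathbb E\!\int_0^T\!\!\int_G\!|\mathcal R_\beta^\varepsilon(s,e)|^2\lambda(de)\,ds,
\end{equation*}
so Gronwall reduces the claim to bounding every forcing contribution by $\varepsilon^2\rho(\varepsilon)$. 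For (a), Proposition~\ref{pro 3.1}(ii) yields $\mathbb E\int_0^T|\Delta^\varepsilon|^6\,ds\leq C\varepsilon^3$. For (b), Lipschitz continuity of the second derivatives together with $|\Delta^\varepsilon|=o_P(1)$ and Proposition~\ref{pro 3.1}(i) give $\mathbb E\int_0^T|\mathrm{osc}|^2|X^{1,\varepsilon}|^4ds\leq\varepsilon^2\rho(\varepsilon)$ by dominated convergence. For (c), the factor $\mathbbm{1}_{E_\varepsilon}$ contributes $|E_\varepsilon|=\varepsilon$ while $X^{2,\varepsilon}$ gives an extra $\varepsilon$ in $L^2$.

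The delicate family is (d), because the naive Proposition~\ref{pro 3.1} estimate $\mathbb E|\widetilde{X}^{2,\varepsilon}|^2\leq C\varepsilon^2$ combined with boundedness of $\psi_\nu$ only yields $O(\varepsilon^2)$ and misses the required $\rho(\varepsilon)$ factor; worse, the cross and pure-measure second-derivative residuals involve products $\widetilde{X}^{1,\varepsilon}\widehat X^{1,\varepsilon}$ that contribute $O(\varepsilon)$ pointwise. This is exactly what Lemma~\ref{le 3.2} and Corollary~\ref{cor 3.3} are designed for: applying them with suitably chosen $\overline\psi_1,\overline\psi_2,\overline\psi_3$ (for instance $\overline\psi_2=\widetilde{X}^{1,\varepsilon}$ or an exponential-martingale transform of $\widetilde{X}^{2,\varepsilon}$, which lies in every $L^p$ and is adapted independently of $e'\in G$) upgrades these $O(\varepsilon^2)$ bounds to $\varepsilon^2\rho(\varepsilon)$. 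For the jump integrals one uses in addition $|\beta_{xx}^-|,|\beta_{\nu a}^-|,|\beta_{\nu\nu}^-|\leq C(1\wedge|e|)$ from (A3.3) to keep the $\lambda$-integrals finite, and the $L^p$-bound on $n(t)=e^{-S(t)}$ from Assumption (A3.4) when invoking Lemma~\ref{le 3.2} on its own forward variational piece. Putting everything together and invoking Gronwall then delivers \eqref{equ 3.24}.
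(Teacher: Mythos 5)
Your plan is essentially the argument the paper relies on: the paper omits the proof of Proposition~\ref{pro 3.4} and refers to the analogous estimate in Buckdahn, Li and Ma \cite{BLM}, which proceeds exactly as you describe --- set $\xi^\varepsilon=X^\varepsilon-X^*-X^{1,\varepsilon}-X^{2,\varepsilon}$, Taylor-expand the coefficients jointly in the state and the measure, subtract \eqref{equ 3.2}--\eqref{equ 3.3}, and close with Gronwall after bounding the forcing terms via Proposition~\ref{pro 3.1} and the key mean-field estimates of Lemma~\ref{le 3.2} and Corollary~\ref{cor 3.3} (which are indeed what rescues the cross- and second-order-measure-derivative terms from being merely $O(\varepsilon^2)$). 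One small correction: the terms $\widetilde{\mathbb E}[\psi_\nu(t;\widetilde X^*(t))\widetilde X^{2,\varepsilon}(t)]$ are already present in \eqref{equ 3.3} and cancel exactly, so they do not belong to your ``delicate family (d)'' and need no upgraded estimate; the genuinely delicate residuals are only the mixed $\partial_a\partial_\nu$-, $\partial_x\partial_\nu$- and pure $\partial_\nu^2$-contributions, which your use of Lemma~\ref{le 3.2} handles correctly.
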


\begin{proposition}\label{pro 3.5}
Let us define
$$
\begin{aligned}
&(\phi_x,\phi_{xx})(T):=(\frac{\partial\phi}{\partial x},\frac{\partial^2\phi}{\partial x^2})(X^*(T),P_{X^*(T)}),
(\phi_\nu,\phi_{\nu a})(T;\widetilde{X}^*(T)):=(\frac{\partial\phi}{\partial \nu},\frac{\partial^2\phi}{\partial \nu \partial a})(X^*(T),P_{X^*(T)};\widetilde{X}^*(T)),
\end{aligned}
$$
and
\begin{equation}\label{equ 3.4-1}
\begin{aligned}
&\Lambda_T^\varepsilon(\phi):=\phi(X^\varepsilon(T),P_{X^\varepsilon(T)})-\phi(X^*(T),P_{X^*(T)})
 -\Big\{ \phi_x(T)(X^{1,\varepsilon}(T)+X^{2,\varepsilon}(T))\\
&+\widetilde{\mathbb E}[\phi_\nu(T;\widetilde{X}^*(T))(\widetilde{X}^{1,\varepsilon}(T)
+\widetilde{X}^{2,\varepsilon}(T))]
+\frac{1}{2}\phi_{xx}(T)(X^{1,\varepsilon}(T))^2+\frac{1}{2}\widetilde{\mathbb E}[\phi_{\nu a}(T;\widetilde{X}^*(T))(\widetilde{X}^{1,\varepsilon}(T))^2]\Big\},
\end{aligned}
\end{equation}
then\begin{equation}\label{equ 3.5}
\mathbb E[|\Lambda_T^\varepsilon(\phi)|^2]\leq \varepsilon^2\rho(\varepsilon).
\end{equation}
\end{proposition}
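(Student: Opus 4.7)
The plan is a joint second-order Taylor expansion of $\phi$ around $(X^*(T),P_{X^*(T)})$ followed by conversion of $\Delta X:=X^\varepsilon(T)-X^*(T)$ to $X^{1,\varepsilon}(T)+X^{2,\varepsilon}(T)$ via Propositions \ref{pro 3.1} and \ref{pro 3.4}. First, I split
\[
\phi(X^\varepsilon(T),P_{X^\varepsilon(T)})-\phi(X^*(T),P_{X^*(T)}) = I_1 + I_2,
\]
where $I_2:=\phi(X^\varepsilon(T),P_{X^*(T)})-\phi(X^*(T),P_{X^*(T)})$ is a pure $x$-variation at frozen measure, and $I_1$ is the complementary measure-variation at frozen first argument $X^\varepsilon(T)$. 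For $I_2$ a classical second-order Taylor in $x$ leaves a cubic remainder which, by Proposition \ref{pro 3.1}(ii), has $L^2$-squared size at most $C\,\mathbb E|\Delta X|^6\leq C\varepsilon^3$. For $I_1$ the Fr\'echet lifted expansion on $\mathcal P_2(\mathbb R)$ produces the two native measure terms $\widetilde{\mathbb E}[\partial_\nu\phi\,\widetilde{\Delta X}]$ and $\tfrac12\widetilde{\mathbb E}[\partial_a\partial_\nu\phi\,(\widetilde{\Delta X})^2]$, plus an extra cross term $\tfrac12\widetilde{\mathbb E}\widehat{\mathbb E}[\partial_\nu^2\phi\,\widetilde{\Delta X}\,\widehat{\Delta X}]$, together with a third-order remainder bounded deterministically by $C(\widetilde{\mathbb E}|\widetilde{\Delta X}|^2)^{3/2}\leq C\varepsilon^{3/2}$, i.e.\ with $L^2$-squared size $\varepsilon^3$.

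Next I would freeze the first-argument $X^\varepsilon(T)$ in each measure derivative at $X^*(T)$ and substitute $\Delta X$ by $X^{1,\varepsilon}(T)+X^{2,\varepsilon}(T)$ in first-order Taylor terms and $(\Delta X)^2$ by $(X^{1,\varepsilon}(T))^2$ in quadratic ones. The $\Delta X$-substitution in the first-order terms has $L^2$-squared cost at most $C\,\mathbb E|\Delta X-X^{1,\varepsilon}(T)-X^{2,\varepsilon}(T)|^2\leq\varepsilon^2\rho(\varepsilon)$ by Proposition \ref{pro 3.4}. The $(\Delta X)^2$-substitution is handled by the factorisation $(\Delta X)^2-(X^{1,\varepsilon}(T))^2=(\Delta X-X^{1,\varepsilon}(T))(\Delta X+X^{1,\varepsilon}(T))$, Cauchy--Schwarz, and Proposition \ref{pro 3.1}(i),(iii), yielding $L^2$-squared size $\varepsilon^3$. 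The coefficient freezing in $\partial_\nu\phi$ introduces a first-order Lipschitz correction of the shape $\partial_x\partial_\nu\phi(X^*(T),P_{X^*(T)};\widetilde X^*(T))\,\Delta X+O(|\Delta X|^2)$, which after averaging against $\widetilde{\Delta X}$ leaves the delicate bilinear remainder $\Delta X\cdot\widetilde{\mathbb E}[\partial_x\partial_\nu\phi\,\widetilde X^{1,\varepsilon}(T)]$; the analogous freezing for $\partial_a\partial_\nu\phi$ is easier since it already multiplies the quadratic factor $(\widetilde{\Delta X})^2$.

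The main obstacle is therefore the control of two bilinear cross objects: $\Delta X\cdot\widetilde{\mathbb E}[\psi\,\widetilde X^{1,\varepsilon}(T)]$ for bounded $\psi$, coming from coefficient freezing, and the pure term $\widetilde{\mathbb E}\widehat{\mathbb E}[\partial_\nu^2\phi\,\widetilde X^{1,\varepsilon}(T)\,\widehat X^{1,\varepsilon}(T)]$ which has no counterpart in $\Lambda_T^\varepsilon(\phi)$. A naive Cauchy--Schwarz bound produces only $L^2$-squared size $\varepsilon^2$, which is insufficient. The decisive tool is the Lemma \ref{le 3.2}/Corollary \ref{cor 3.3} estimate at the terminal time $t=T$ (obtainable by inspecting their proofs at a single fixed time rather than integrating): for bounded $\psi$, $\mathbb E|\widetilde{\mathbb E}[\psi\,\widetilde X^{1,\varepsilon}(T)]|^4\leq C\|\psi\|_\infty^4\,\varepsilon^2\rho(\varepsilon)$. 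Combined with $\{\mathbb E|\Delta X|^4\}^{1/2}\leq C\varepsilon$ from Proposition \ref{pro 3.1}(ii), this upgrades the first bilinear term to $L^2$-squared size $\varepsilon^2\sqrt{\rho(\varepsilon)}$. For the $\partial_\nu^2$ cross term I would apply $\widehat{\mathbb E}$ first to rewrite it as $\widetilde{\mathbb E}[G(X^*(T),\widetilde X^*(T))\,\widetilde X^{1,\varepsilon}(T)]$ with $\|G\|_\infty\leq C\varepsilon^{1/2}$ (since $\widehat{\mathbb E}|\widehat X^{1,\varepsilon}(T)|\leq C\varepsilon^{1/2}$ and $\partial_\nu^2\phi$ is bounded), then reapply the same corollary to reach size $\varepsilon^2\sqrt{\rho(\varepsilon)}$. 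Assembling every contribution produces $\mathbb E|\Lambda_T^\varepsilon(\phi)|^2\leq\varepsilon^2\rho(\varepsilon)$.
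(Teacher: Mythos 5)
Your argument is correct and is essentially the proof the paper intends: the paper itself omits the argument, deferring to the corresponding result of Buckdahn, Li and Ma \cite{BLM}, whose proof is exactly this joint second-order Taylor expansion (in $x$ and along the lift in $\mathcal{P}_2(\mathbb{R})$) combined with the fourth-moment estimate of Lemma \ref{le 3.2}/Corollary \ref{cor 3.3} to show that the $\partial_x\partial_\nu\phi$ cross term and the $\partial_\nu^2\phi$ term, which are absent from $\Lambda_T^\varepsilon(\phi)$, are $o(\varepsilon)$ in $L^2$. The only delicate point --- that the Corollary \ref{cor 3.3}-type bound is needed at the single time $t=T$ rather than in integrated form --- is one you correctly flag, and it does follow since the proof of Lemma \ref{le 3.2} establishes the pointwise-in-$t$ bound on each $\Xi_i^\varepsilon(t,\cdot)$ before integrating.
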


The similar proofs of the above  two propositions can be found in Buckdahn, Li and Ma \cite{BLM}.

\subsection{Adjoint equations}

In order to apply duality method to investigate our stochastic maximum principle, two adjoint equations are brought in. Compared
with the classical case, see Hu \cite{Hu}, a remarkable difference is that the first-order adjoint equation is a mean-field BSDE with
jumps. But the second-order adjoint equation is a classical linear BSDE with jumps, but not mean-field type.

\indent Let us first introduce some notations, which are used time and time again, for $\ell=x,y,z,k,$ $\theta=x,y,z,k,a_i$, $i,j=1,2$,
\begin{equation}\label{equ 3.27}
\begin{aligned}
&\Pi^*(s)=(X^*(s), Y^*(s), Z^*(s), K^*(s,\cdot)),\quad \Lambda^*(s)=(X^*(s),Y^*(s)),\\
&f_\ell(s)=\frac{\partial f}{\partial \ell}(s,\Pi^*(s), P_{ \Lambda^*(s)},u^*(s)),\\\
&(f_{\mu_i},f_{\mu_i \theta})(s;\widetilde{\Lambda}^*(s))=((\frac{\partial f}{\partial \mu})_i,
\frac{\partial}{\partial \theta}((\frac{\partial f}{\partial \mu})_i))(s,\Pi^*(s),
P_{ \Lambda^*(s)},u^*(s);\widetilde{\Lambda}^*(s)),\\
&(\widetilde{f}_{\mu_i},\widetilde{f}_{\mu_i \theta})(s)=((\frac{\partial f}{\partial \mu})_i,
\frac{\partial}{\partial \theta}((\frac{\partial f}{\partial \mu})_i))(s,\widetilde{\Pi}^*(s),
P_{ \Lambda^*(s)},\widetilde{u}^*(s);\Lambda^*(s)),\\
&f_{\mu_i \mu_j}(s;\widehat{\widetilde{\Lambda}}^*(s))=
(\frac{\partial}{\partial \mu}((\frac{\partial f}{\partial \mu})_i))_j
(s,\Pi^*(s), P_{ \Lambda^*(s)},u^*(s);\widetilde{\Lambda}^*(s), \widehat{\Lambda}^*(s)).\\
\end{aligned}
\end{equation}
With these concise notations in hand, the first-order adjoint equation can read as
\begin{equation}\label{equ 3.28}
  \left\{
   \begin{aligned}
-dY^1(s)&=F(s)ds-Z^1(s)dW(s)-\int_GR^1(s,e)N_\lambda(de,ds),\ s\in[0,T],\\
Y^1(T)&=\phi_x(T)+\widetilde{\mathbb E}[\widetilde{\phi}_\nu(T)],
\end{aligned}
   \right.
\end{equation}
where
$$
 \begin{aligned}
F(s)&=Y^1(s)\Big(f_y(s)+\widetilde{\mathbb E}[\widetilde{f}_{\mu_2}(s)]+f_z(s)\sigma_x(s)+\int_Gf_k(s)\beta_x(s,e)\lambda(de)+b_x(s)\Big)\\
&\quad+\widetilde{\mathbb E}\Big[\widetilde{Y}^1(s)\Big(\widetilde{f}_z(s)\widetilde{\sigma}_\nu(s)
+\int_G\widetilde{f}_k(s)\widetilde{\beta}_\nu(s,e)\lambda(de)+\widetilde{b}_\nu(s)\Big)\Big]\\
&\quad+Z^1(s)\Big(f_z(s)+\sigma_x(s)\Big)+\widetilde{\mathbb E}[\widetilde{Z}^1(s)\widetilde{\sigma}_\nu(s)]+f_x(s)+\widetilde{\mathbb E}[\widetilde{f}_{\mu_1}(s)]   \\
&\quad+\int_G\Big(R^1(s,e)\big(f_k(s)+\beta_x(s,e)\big)
+\widetilde{\mathbb E}[\widetilde{R}^1(s,e)\widetilde{\beta}_\nu(s,e)]\Big)\lambda(de),\\
\end{aligned}
$$
Under the Assumptions (A3.1)-(A3.2)  the unique solution of
 equation (\ref{equ 3.28}), $(Y^1,Z^1,R^1)$, satisfies, for $\ell\geq2$,
\begin{equation}\label{equ 3.29}
\mathbb E\bigg[\sup_{t\in[0,T]}|Y^1(t)|^\ell
+\bigg(\int_0^T|Z^1(t)|^2dt\bigg)^\frac{\ell}{2}
+\bigg(\int_0^T\int_G|R^1(t,e)|^2\lambda(de)dt\bigg)
^\frac{\ell}{2}\bigg]\leq C_\ell
\end{equation}
(see Proposition 4.1, Li \cite{Li}).
\quad\\
\indent
Once obtaining the solution $(Y^1,Z^1,R^1)$ of the equation (\ref{equ 3.28}), we
can consider the following  second-order adjoint equation
\begin{equation}\label{equ 3.30}
  \left\{
   \begin{aligned}
-dY^2(s)&=G(s)ds-Z^2(s)dW(s)-\int_GR^2(s,e)N_\lambda(de,ds),\ s\in[0,T],\\
Y^2(T)&=\phi_{xx}(T)+\widetilde{\mathbb E}[\widetilde{\phi}_{\nu y}(T)],
\end{aligned}
   \right.
\end{equation}
where
$$
\begin{aligned}
G(s)&=Y^2(s)\Big(f_y(s)+\widetilde{\mathbb E}[\widetilde{f}_{\mu_2}(s)]+2f_z(s)\sigma_x(s)+\int_Gf_k(s)\big(2\beta_x(s,e)+(\beta_x(s,e))^2\big)\lambda(de)\\
&\qquad\qquad\qquad\qquad\qquad\qquad\qquad\qquad\qquad  +2b_x(s)+(\sigma_x(s))^2+\int_G(\beta_x(s,e))^2\lambda(de)\Big)\\
\end{aligned}
$$
$$
\begin{aligned}
&\quad +Z^2(s)\Big(f_z(s)+2\sigma_x(s)\Big)+\int_GR^2(s,e)\Big(f_k(s)+2\beta_x(s,e)+(\beta_x(s,e))^2\Big)\lambda(de)\\
&\quad +Y^1(s)\Big(f_z(s)\sigma_{xx}(s)+\int_Gf_k(s)\beta_{xx}(s,e)\lambda(de)+b_{xx}(s)\Big)
+\widetilde{\mathbb E}\Big[\widetilde{Y}^1(s)\Big(\widetilde{f}_z(s)\widetilde{\sigma}_{\nu a}(s)\\
&\quad
+\int_G \widetilde{f}_k(s)\widetilde{\beta}_{\nu a}(s,e)\lambda(de)+\widetilde{b}_{\nu a}(s)\Big)\Big]
+Z^1(s)\sigma_{xx}(s)+\widetilde{\mathbb E}[\widetilde{Z}^1(s)\widetilde{\sigma}_{\nu a}(s)]\\
&\quad +\int_G\Big( R^1(s,e)\beta_{xx}(s,e)+\widetilde{\mathbb E}[\widetilde{R}^1(s,e)
\widetilde{\beta}_{\nu a}(s,e)]\Big)\lambda(de)+O(s)D^2f(s)O^\intercal(s)\\
&\quad+ \widetilde{\mathbb E}[\widetilde{f}_{\mu_1 a_1}(s)] +(Y^1(s))^2\widetilde{\mathbb E}[\widetilde{f}_{\mu_2 a_2}(s)],
\end{aligned}
$$
and  $O(s)=(1,Y^1(s), Y^1(s)\sigma_x(s)+Z^1(s),\int_G(Y^1(s)\beta_x(s,e)+R^1(s,e))\lambda(de))$,
$D^2f(s)$ denotes the Hessian matrix of $f$ with respect to $(x,y,z,k)$, i.e.,
$D^2f=\begin{pmatrix}
f_{xx}(t)&f_{xy}(t)&f_{xz}(t)& f_{xk}(t)\\
f_{yx}(t)&f_{yy}(t)&f_{yz}(t)&f_{yk}(t)\\
f_{zx}(t)&f_{zy}(t)&f_{zz}(t)&f_{zk}(t)\\
f_{kx}(t)&f_{ky}(t)&f_{kz}(t)&f_{kk}(t)\\
\end{pmatrix}.$
Since we have known $(Y^1,Z^1,R^1)$, the equation (\ref{equ 3.30}) is a classical linear BSDE with jumps. From the
well-known existence and uniqueness theorem of BSDEs with jumps,
under the Assumptions (A3.1)-(A3.3) the equation (\ref{equ 3.30})
admits a unique solution
$(Y^2,Z^2,R^2)$ and, moreover, for $\ell\geq2$,
\begin{equation}\label{equ 3.31}
\mathbb E\bigg[\sup_{t\in[0,T]}|Y^2(t)|^\ell
+
\bigg(\int_0^T|Z^2(t)|^2dt\bigg)^\frac{\ell}{2}
+\bigg(\int_0^T\int_G|R^2(t,e)|^2\lambda(de)dt
\bigg)^\frac{\ell}{2}\Bigg]\leq C_\ell.
\end{equation}

From Lemma \ref{le 3.2}, the following estimates hold true.
\begin{corollary}\label{cor 3.6}
Let the Assumptions (A3.1)-(A3.4) hold true, and set for $\ell=x,y,z,$
$$
\begin{aligned}
\widetilde{M}_1(s)&=(f_{\mu_1},f_{\mu_2},f_{\mu_1 \ell},f_{\mu_2 \ell})(s;\widetilde{\Lambda}^*(s)), \ \ \
\widetilde{M}_2(s)=(f_{k\mu_1},f_{k\mu_2})(s;\widetilde{\Lambda}^*(s)),\\
\widehat{\widetilde{M}}_3(s)&=(f_{\mu_1 \mu_1},f_{\mu_1 \mu_2},f_{\mu_2 \mu_2})(s;\widehat{\widetilde{\Lambda}}^*(s)).
\end{aligned}
$$
Moreover, let $X^{1,\varepsilon}$ and $Y^1$ be the solutions of (\ref{equ 3.2}) and  (\ref{equ 3.28}), respectively, then
\begin{equation}\label{equ 3.32}
\begin{aligned}
&\mathrm{i)}\ \mathbb E\Big[\int_0^T |\widetilde {\mathbb E}[\widetilde{M}_1(s)\widetilde{Y}^1(s)\widetilde{X}^{1,\varepsilon}(s)]|^4ds
+\int_0^T\int_G |\widetilde{{\mathbb E}}[\widetilde{M}_2(s)\widetilde{Y}^1(s)\widetilde{X}^{1,\varepsilon}(s)|^4 \lambda(de)ds\Big]\leq\varepsilon^2\rho(\varepsilon);\\
&\mathrm{ii)}\ \mathbb E\widehat{\mathbb E}\Big[\int_0^T |\widetilde{\mathbb E}[\widehat{\widetilde{M}}_3(s)\widetilde{Y}^1(s)\widetilde{X}^{1,\varepsilon}(s)]|^4ds\Big]\leq\varepsilon^2\rho(\varepsilon).\\
\end{aligned}
\end{equation}

\end{corollary}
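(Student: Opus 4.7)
The plan is to reduce every term in (\ref{equ 3.32}) to a direct application of Lemma \ref{le 3.2} by a uniform choice $\overline{\psi}_2(s):=\widetilde{Y}^1(s)$. Condition (b) of Lemma \ref{le 3.2} then holds by the moment estimate (\ref{equ 3.29}), which gives $\widetilde{\mathbb E}[\sup_{s\in[0,T]}|\widetilde Y^1(s)|^{2\ell}]\leq C_\ell$ for all $\ell\geq 1$. Moreover, under Assumption (A3.3), every component of $\widetilde M_1$, $\widetilde M_2$ and $\widehat{\widetilde M}_3$ is bounded by a universal constant $C$, independent of the time variable and of the auxiliary arguments $e\in G$ and $\widehat\omega\in\widehat\Omega$, so condition (a) of Lemma \ref{le 3.2} will be automatic in every instance.

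For the first integral in i), I would invoke Lemma \ref{le 3.2}(i) with $\overline{\psi}_1(s):=\widetilde M_1(s)$, which immediately yields the required $\varepsilon^2\rho(\varepsilon)$ bound. For the second integral in i), note that $\widetilde M_2$ is in fact an $e$-indexed family: since $k$ takes values in $L^2(G,\mathscr B(G),\lambda)$, the Fr\'echet derivative $\partial_k f$ admits an $e$-dependent density, and the same applies to $f_{k\mu_i}$. The bound $|\widetilde M_2(s,e)|\leq C$ is weaker than the $C(1\wedge|e|)$ hypothesis imposed on $\overline{\psi}_3$ in Lemma \ref{le 3.2}; however, tracing its proof shows that the finer bound is only used as $|\overline{\psi}_3|\leq C$ combined with $\lambda(G)<\infty$, so the conclusion persists with $\overline{\psi}_3(s,e):=\widetilde M_2(s,e)$.

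For ii), the extra intermediate space $\widehat\Omega$ is handled by Fubini. Fixing $\widehat\omega\in\widehat\Omega$, the process $s\mapsto\widehat{\widetilde M}_3(s)(\,\cdot\,,\widehat\omega)$ lives on $\widetilde\Omega$ and is bounded by $C$ uniformly in $\widehat\omega$. Applying Lemma \ref{le 3.2}(i) with this choice of $\overline{\psi}_1$ and $\overline{\psi}_2=\widetilde Y^1$ yields, for every $\widehat\omega$,
$$\mathbb E\Big[\int_0^T\bigl|\widetilde{\mathbb E}[\widehat{\widetilde M}_3(s)\widetilde Y^1(s)\widetilde X^{1,\varepsilon}(s)]\bigr|^4\,ds\Big]\leq \varepsilon^2\rho(\varepsilon),$$
with $\rho$ independent of $\widehat\omega$. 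Integrating against $\widehat P$ and exchanging the order of integration (Fubini, the integrand being nonnegative) delivers ii).

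The only genuinely delicate point is the uniformity of $\rho(\varepsilon)$ in the auxiliary parameters: in $e$ for the second estimate of i), and in $\widehat\omega$ for ii). Both reduce to the observation that the quantitative constants in the proof of Lemma \ref{le 3.2} depend only on the uniform bound $C$ of the multipliers, on the moment estimate (\ref{equ 3.29}) for $Y^1$, and on $\lambda(G)<\infty$; all of these are universal under Assumption (A3.3), so that the auxiliary parameters do not enter the final constants.
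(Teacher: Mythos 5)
Your proposal is correct and is exactly the argument the paper intends: the paper offers no written proof of Corollary \ref{cor 3.6} beyond the remark that it follows from Lemma \ref{le 3.2}, and your choices $\overline{\psi}_2=\widetilde{Y}^1$ (condition (b) via (\ref{equ 3.29})), $\overline{\psi}_1=\widetilde{M}_1$, $\overline{\psi}_3=\widetilde{M}_2$, together with the fix for $\widehat{\widetilde{M}}_3$ by conditioning on $\widehat{\omega}$ and integrating, are the natural instantiations. Your two side observations are also sound — the proof of Lemma \ref{le 3.2}(ii) indeed only uses $|\overline{\psi}_3|\leq C$ together with $\lambda(G)<\infty$, and the $\widehat{\omega}$-integration can in any case be closed by dominated convergence since the pointwise ratio is uniformly bounded by Proposition \ref{pro 3.1} and (\ref{equ 3.29}).
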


\section{The second-order expansion of cost functional $Y^\varepsilon$}

 The second-order expansion of cost functional $Y^\varepsilon$ is stated in this section, which plays an important role in  proving our stochastic maximum principle. More precisely,
 we  prove that there exists a stochastic process $\breve{P}=(\breve{P}(t))_{t\in[0,T]}$ with $\breve{P}(T)=0$, such that, for all $t\in[0,T]$,
\begin{equation}\label{equ 4.1}
Y^\varepsilon(t)=Y^*(t)+Y^1(t)(X^{1,\varepsilon}(t)+X^{2,\varepsilon}(t))+\frac{1}{2}Y^2(t)(X^{1,\varepsilon}(t))^2
+\breve{P}(t)+o(\varepsilon),
\end{equation}
where the convergence is in $L^2(\Omega, C[0,T])$  sense.

For this purpose, let us first introduce the following linear mean-field BSDE with jumps:
\begin{equation}\label{equ 4.2}
  \left\{
\begin{aligned}
-d\breve{P}(t)&=\Big(f_y(t)\breve{P}(t)+f_z(t)\breve{Q}(t)+\int_Gf_k(t)\breve{K}(t,e)\lambda(de)
+\widetilde{\mathbb E}[f_{\mu_2}(t;\widetilde{\Lambda}^*(t))\widetilde{\breve{P}}(t)]\\
&\quad+\big(A_1(t)+\Delta f(t)\big)\mathbbm{1}_{E_\varepsilon}(t)\Big)dt
-\breve{Q}(t)dW(t)-\int_G\breve{K}(t,e)N_\lambda(de,dt), \  t\in[0,T],\\
\breve{P}(T)&=0,
\end{aligned}
\right.
\end{equation}
where
$$
\begin{aligned}
A_1(t)&=Y^1(t)\delta b(t)+Z^1(t)\delta \sigma(t)+\frac{1}{2}Y^2(t)(\delta\sigma(t))^2,\\
\Delta f(t)&=f(t,X^*(t),Y^*(t),Z^*(t)+Y^1(t)\delta\sigma(t),K^*(t,\cdot),P_{(X^*(t),Y^*(t))},v(t))\\
           &\quad-f(t,X^*(t),Y^*(t),Z^*(t),K^*(t,\cdot),P_{(X^*(t),Y^*(t))},u^*(t)).\\
\end{aligned}
$$
Obviously, under the Assumptions (A3.1)-(A3.2) the equation (\ref{equ 4.2}) possesses a unique solution
$(\breve{P},\breve{Q},\breve{K})\in \mathcal{S}^2_{\mathbb{F}}(0,T)\times \mathcal{H}^2_{\mathbb{F}}(0,T)\times K_\lambda^2(0,T)$
(see \cite{Li}). Moreover,
\begin{proposition}\label{pro 4.1}
Let the Assumptions (A3.1)-(A3.2) hold true, then for $\ell\geq2$,
\begin{equation}\label{equ 4.3}
E\Big[\sup_{t\in[0,T]}|\breve{P}(t)|^\ell+\Big(\int_0^T|\breve{Q}(t)|^2dt\Big)^{\frac{\ell}{2}}
+\Big(\int_0^T\int_G|\breve{K}(t,e)|^2\lambda(de)dt\Big)^{\frac{\ell}{2}}\Big]\leq \varepsilon^{\frac{\ell}{2}} \rho_\ell(\varepsilon),
\end{equation}
\end{proposition}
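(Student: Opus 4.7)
Equation (\ref{equ 4.2}) is a linear mean-field BSDE with jumps whose driver has, under Assumptions (A3.1)--(A3.2), bounded coefficients $f_y,f_z,f_k$ in front of $\breve{P},\breve{Q},\breve{K}$ and a bounded mean-field loading $f_{\mu_2}$ in front of $\widetilde{\breve{P}}$. Its terminal condition is zero, so the only inhomogeneity is the source term $g(t):=(A_1(t)+\Delta f(t))\mathbbm{1}_{E_\varepsilon}(t)$, which is supported on the set $E_\varepsilon$ of Lebesgue measure $\varepsilon$. The plan is to extract a factor $\varepsilon^{\ell/2}$ from this support, and an additional vanishing factor $\rho_\ell(\varepsilon)$ from a dominated-convergence argument applied to the $L^2$-norm in time of $A_1+\Delta f$ restricted to $E_\varepsilon$.

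First I would establish the standard $L^\ell$ a priori estimate
$$\mathbb E\Big[\sup_{t\in[0,T]}|\breve{P}(t)|^\ell+\Big(\int_0^T|\breve{Q}(t)|^2 dt\Big)^{\ell/2}+\Big(\int_0^T\!\!\int_G|\breve{K}(t,e)|^2\lambda(de)dt\Big)^{\ell/2}\Big]\leq C_\ell\,\mathbb E\Big[\Big(\int_0^T|g(t)|\,dt\Big)^\ell\Big]$$
by applying It\^o's formula for semi-martingales with jumps to $|\breve{P}(t)|^\ell$, combined with the BDG inequality on the Brownian stochastic integral and the compensated Poisson integral, the boundedness of $f_y,f_z,f_k$, and the treatment of the mean-field term via $|\widetilde{\mathbb E}[f_{\mu_2}\widetilde{\breve{P}}]|\leq C(\mathbb E|\breve{P}(t)|^\ell)^{1/\ell}\leq C(\mathbb E\sup_{s\leq t}|\breve{P}(s)|^\ell)^{1/\ell}$. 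Young's inequality absorbs the mean-field contribution into the left-hand side and Gronwall's lemma closes the estimate; such bounds for linear mean-field BSDEs with jumps are essentially contained in \cite{Li}.

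Next I exploit that $g$ vanishes off $E_\varepsilon$. Cauchy--Schwarz in time gives $\int_0^T|g(t)|\,dt\leq \varepsilon^{1/2}\bigl(\int_0^T|A_1+\Delta f|^2\mathbbm{1}_{E_\varepsilon}\,dt\bigr)^{1/2}$, so raising to the power $\ell$ and taking expectation yields (\ref{equ 4.3}) with
$$\rho_\ell(\varepsilon):=C_\ell\,\mathbb E\Big[\Big(\int_0^T|A_1(t)+\Delta f(t)|^2\mathbbm{1}_{E_\varepsilon}(t)\,dt\Big)^{\ell/2}\Big].$$
By (A3.1) the functions $\delta b,\delta\sigma,\Delta f$ are bounded, while (\ref{equ 3.29}) and (\ref{equ 3.31}) give moments of every order for $Y^1,Z^1,Y^2$; consequently $\bigl(\int_0^T|A_1+\Delta f|^2 dt\bigr)^{\ell/2}$ is an integrable dominating function. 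For a.e.\ $\omega$ the map $t\mapsto|A_1+\Delta f|^2(\omega,t)$ lies in $L^1([0,T])$, so absolute continuity of the Lebesgue integral gives $\int_0^T|A_1+\Delta f|^2\mathbbm{1}_{E_\varepsilon}\,dt\to 0$ pointwise as $\varepsilon\to 0$, and dominated convergence forces $\rho_\ell(\varepsilon)\to 0$.

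The technically most delicate step, in my view, is the $L^\ell$ a priori estimate itself: both the jump term $\breve{K}$ and the mean-field loading $\widetilde{\mathbb E}[f_{\mu_2}\widetilde{\breve{P}}]$ are present, so the It\^o expansion of $|\breve{P}|^\ell$ produces a quadratic Brownian correction in $\breve{Q}$ and a non-linear jump correction involving $\breve{K}$, and the mean-field contribution must be dominated in a way that still allows it to be absorbed on the left-hand side after Young's inequality before Gronwall can be closed. Once this estimate is in place, the Cauchy--Schwarz extraction of $\varepsilon^{\ell/2}$ and the absolute-continuity argument producing $\rho_\ell(\varepsilon)\to 0$ are routine.
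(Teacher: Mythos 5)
Your proposal is correct and follows essentially the same route as the paper: a standard $L^\ell$ a priori estimate for the linear mean-field BSDE with jumps reducing everything to $\mathbb E[(\int_0^T|A_1+\Delta f|\mathbbm{1}_{E_\varepsilon}\,dt)^\ell]$, followed by Cauchy--Schwarz in time to extract $\varepsilon^{\ell/2}$ and a dominated-convergence argument for the vanishing factor. The only cosmetic difference is that the paper splits the integrand into its bounded part (which yields an extra explicit $\varepsilon^{\ell/2}$) and the $Z^1$ part (handled by dominated convergence), whereas you apply dominated convergence to the whole $L^2$-in-time factor; both are valid.
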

\noindent where $\rho_\ell:(0,+\infty)\rightarrow(0,+\infty)$ depending only on $\ell$ with $\rho_\ell(\varepsilon)\rightarrow$ as $\varepsilon\rightarrow0$.

\begin{proof}
From the standard argument for the solutions of classical BSDEs with jumps, we have, for $\ell\geq2$,
\begin{equation}\label{equ 4.4}
\begin{aligned}
&\mathbb E\Big[\sup_{t\in[0,T]}|\breve{P}(t)|^\ell+\Big(\int_0^T|\breve{Q}(t)|^2dt\Big)^{\frac{\ell}{2}}
+\Big(\int_0^T\int_G|\breve{K}(t,e)|^2\lambda(de)dt\Big)^{\frac{\ell}{2}}\Big]\\
&\leq C_\ell \mathbb E\bigg[(\int_0^T|A_1(t)+\Delta f(t)|\mathbbm{1}_{E_\varepsilon}(t)dt)^\ell
\bigg].
\end{aligned}
\end{equation}
The reader can refer to \cite{LW1}, \cite{LW2} for more details.\\
On the other hand, thanks to the boundness of $b,\ \sigma$ and the Lipschitz
property of $f$, H\"{o}lder inequality implies that
$$
\begin{aligned}
&\mathbb E\bigg[(\int_0^T|A(t)+\Delta f(t)|\mathbbm{1}_{E_\varepsilon}(t)dt)^\ell
\bigg]\\
&\leq C\mathbb E\Big[\Big(\int_0^T(|Y^1(s)\delta b(s)+Z^1(s)\delta\sigma(s)+\frac{1}{2}Y^2(s)(\delta\sigma(s))^2+Y^1(s)+1|\mathbbm{1}_{E_\varepsilon}(s))
ds\Big)^\ell\Big]\\
&\leq \varepsilon^\frac{\ell}{2}\rho_\ell(\varepsilon),
\end{aligned}
$$
where
$\rho_\ell(\varepsilon):=\varepsilon^{\frac{\ell}{2}}(E[\sup_{s\in[0,T]}|Y^1(s)|^\ell+\sup_{s\in[0,T]}|Y^2(s)|^\ell]+1)
+ \mathbb E\bigg[(\int_0^T|Z^1(s)|^2\mathbbm{1}
_{\mathbb E_\varepsilon}(s)ds)^\frac{\ell}{2}\bigg].$
Clearly, $\rho_\ell(\varepsilon)\rightarrow0$ as $\varepsilon\rightarrow0$. The proof is completed.
\end{proof}

The following theorem shows the second-order expansion of cost functional  $Y^\varepsilon$.
\begin{theorem}\label{th 4.2}
Let the Assumptions (A3.1)-(A3.4) hold true, then there exists a stochastic process over $[0,T]$,
$P=(\breve{P}(t))_{t\in[0,T]}$ with $\breve{P}(T)=0$, such that
\begin{equation}\label{equ 4.11}
\begin{aligned}
\mathbb E\Big[\sup_{t\in[0,T]}|Y^\varepsilon(t)-Y^*(t)-\breve{P}(t)-Y^1(t)(X^{1,\varepsilon}(t)+X^{2,\varepsilon}(t))
-\frac{1}{2}Y^2(t)(X^{1,\varepsilon}(t))^2|^2 \Big]\leq \varepsilon^2 \rho(\varepsilon).
\end{aligned}
\end{equation}
\end{theorem}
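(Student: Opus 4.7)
The plan is to introduce the residual
$$\Gamma^\varepsilon(t) := Y^\varepsilon(t) - Y^*(t) - \breve{P}(t) - Y^1(t)\bigl(X^{1,\varepsilon}(t) + X^{2,\varepsilon}(t)\bigr) - \tfrac{1}{2}Y^2(t)\bigl(X^{1,\varepsilon}(t)\bigr)^2$$
together with the natural candidates $(\Theta^\varepsilon,\Psi^\varepsilon)$ for its martingale parts, and to show that $(\Gamma^\varepsilon,\Theta^\varepsilon,\Psi^\varepsilon)$ solves a linear mean-field BSDE with jumps whose terminal datum and driver are both small enough in $L^2$ to conclude via the standard a priori estimate for such BSDEs.

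First, using $\breve{P}(T)=0$ and the prescribed terminal data of $Y^1$ and $Y^2$, together with Proposition \ref{pro 3.5} and the Fubini/symmetry identity for the measure derivative aligning $\widetilde{\mathbb E}[\widetilde\phi_\nu(T)]X^{1,\varepsilon}(T)$ with $\widetilde{\mathbb E}[\phi_\nu(T;\widetilde X^*(T))\widetilde X^{1,\varepsilon}(T)]$ (and similarly for the quadratic $\phi_{\nu a}$-term), I obtain $\mathbb{E}[|\Gamma^\varepsilon(T)|^2]\leq\varepsilon^2\rho(\varepsilon)$.

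Next, applying Itô's formula for semi-martingales with jumps to $\Gamma^\varepsilon$ using (\ref{equ 1.1}), (\ref{equ 3.2}), (\ref{equ 3.3}), (\ref{equ 3.28}), (\ref{equ 3.30}), (\ref{equ 4.2}), together with the product formulae for $Y^1 X^{i,\varepsilon}$ and $Y^2(X^{1,\varepsilon})^2$, yields
$$-d\Gamma^\varepsilon(t)=\bigl(L^\varepsilon(t)+R^\varepsilon(t)\bigr)dt-\Theta^\varepsilon(t)dW(t)-\int_G \Psi^\varepsilon(t,e)N_\lambda(de,dt),$$
where $L^\varepsilon(t)=f_y(t)\Gamma^\varepsilon(t)+f_z(t)\Theta^\varepsilon(t)+\int_G f_k(t)\Psi^\varepsilon(t,e)\lambda(de)+\widetilde{\mathbb E}[\widetilde f_{\mu_2}(t)\widetilde\Gamma^\varepsilon(t)]$ is the linear mean-field generator, and $R^\varepsilon$ is the remainder. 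The point is that the two adjoint equations and the generator of (\ref{equ 4.2}) are tailor-made so that, after second-order Taylor expansion of $b,\sigma,\beta$ around $(X^*,P_{X^*},u^*)$ and of $f$ around $(X^*,Y^*,Z^*+Y^1\delta\sigma\mathbbm{1}_{E_\varepsilon},K^*,P_{\Lambda^*},u^*)$ — the centering of the $z$-slot being precisely the device absorbed into (\ref{equ 4.2}) — every first-order term, every spike contribution, and every bilinear/quadratic cross-term in the variational processes cancel, leaving $R^\varepsilon$ as a genuine higher-order residual.

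Then the key estimate $\mathbb{E}\bigl[\bigl(\int_0^T|R^\varepsilon(t)|\,dt\bigr)^2\bigr]\leq\varepsilon^2\rho(\varepsilon)$ follows from the moment bounds on $X^{1,\varepsilon},X^{2,\varepsilon}$ and on $X^\varepsilon-X^*-X^{1,\varepsilon}-X^{2,\varepsilon}$ provided by Propositions \ref{pro 3.1} and \ref{pro 3.4}; the moment bounds (\ref{equ 3.29}), (\ref{equ 3.31}), (\ref{equ 4.3}) on $(Y^1,Z^1,R^1),(Y^2,Z^2,R^2),(\breve P,\breve Q,\breve K)$; the uniform Lipschitz continuity of the second-order derivatives of the coefficients; and, most importantly, the subtle $L^4$-in-time estimates (\ref{equ 3.23}) and (\ref{equ 3.32}) of Corollaries \ref{cor 3.3} and \ref{cor 3.6}, which control the mean-field terms $\widetilde{\mathbb E}[\widetilde\psi\,\widetilde X^{1,\varepsilon}]$ and $\widetilde{\mathbb E}[\widetilde M\,\widetilde Y^1\,\widetilde X^{1,\varepsilon}]$ appearing in $R^\varepsilon$ at the $\varepsilon^{1/2}$-level that is required. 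Combining the terminal and driver bounds with the standard $L^2$-estimate for linear mean-field BSDEs with jumps then yields (\ref{equ 4.11}).

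The principal obstacle is the algebraic bookkeeping in the Itô step: one has to verify explicitly that the bilinear interactions of $Y^1$ with the second-order measure derivatives $f_{\mu_i\mu_j},f_{\mu_i a_j}$ (acting on independent copies of $X^{1,\varepsilon}$), as well as the second-order classical derivative terms, recombine exactly into the $O(s)D^2f(s)O^\intercal(s)+\widetilde{\mathbb E}[\widetilde f_{\mu_1 a_1}(s)]+(Y^1(s))^2\widetilde{\mathbb E}[\widetilde f_{\mu_2 a_2}(s)]$ structure built into the driver of (\ref{equ 3.30}); likewise the spike-variation contributions carried by $A_1+\Delta f$ in (\ref{equ 4.2}) must match those produced by the expansion of $f$ in its classical arguments at $u^\varepsilon$. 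Once this algebraic cancellation has been carried out, the remaining probabilistic estimation is a routine application of the tools prepared in Section 3.
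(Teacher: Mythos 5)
Your proposal is correct and follows essentially the same route as the paper: the paper likewise applies It\^o's formula to $Y^1(X^{1,\varepsilon}+X^{2,\varepsilon})+\frac12 Y^2(X^{1,\varepsilon})^2$, forms the residual $\Delta Y=Y^\varepsilon-Y^*-\breve P-M$, identifies a linear(ized) BSDE for it whose terminal datum is controlled by Proposition \ref{pro 3.5} and whose driver splits into a Lipschitz part in $(\Delta X,\Delta Y,\Delta Z,\Delta K)$ plus remainders $(I_4-I_3)\mathbbm{1}_{E_\varepsilon}+I_5$ and the terms of Lemma \ref{le 4.3}, all shown to be $o(\varepsilon)$ in $L^2$ via Propositions \ref{pro 3.1}, \ref{pro 3.4}, \ref{pro 4.1} and Corollaries \ref{cor 3.3}, \ref{cor 3.6}, before concluding with Gronwall. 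The only cosmetic difference is that the paper bounds the linearized generator by its Lipschitz constant rather than writing it as an exact mean-field linear generator, which changes nothing in the estimate.
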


\begin{proof}
Like investigating classical Pontryagin Maximum Principle, an important element of proving Theorem \ref{th 4.2}
is to apply It\^{o}'s formula to
\begin{equation}\label{equ 4.7}
M(t):=Y^1(t)(X^{1,\varepsilon}(t)+X^{2,\varepsilon}(t))+\frac{1}{2}Y^2(t)(X^{1,\varepsilon}(t))^2.
\end{equation}
For this,  we have
\begin{equation}\label{equ 4.8}
\begin{aligned}
&Y^1(t)(X^{1,\varepsilon}(t)+X^{2,\varepsilon}(t))+\frac{1}{2}Y^2(t)(X^{1,\varepsilon}(t))^2
=Y^1(T)(X^{1,\varepsilon}(T)+X^{2,\varepsilon}(T))+\frac{1}{2}Y^2(T)(X^{1,\varepsilon}(T))^2\\
&-\int_t^T\Big(A(s)+A_4(s)\mathbbm{1}_{E_\varepsilon}(s)+A_5(s)\Big)ds
-\int_t^T\Big(B(s)+B_4(s)\mathbbm{1}_{E_\varepsilon}(s)+B_5(s)\Big)dW(s)\\
&-\int_t^T\int_G\Big(C^-(s,e)+C_4^-(s,e)\Big)N_\lambda(de,ds),\\
\end{aligned}
\end{equation}
where
$A,B,C, A_4,\cdot\cdot\cdot, C_4$ are given in Appendix.

Let us first admit the following lemma for a moment.
 Lemma   \ref{le 4.3}  argues  the powers of $\int_0^T|A_4(t)\mathbbm{1}_{E_\varepsilon}(t)|dt,$ $\int_0^T|A_5(t)|dt,$ $\int_0^T|B_4(t)
\mathbbm{1}_{E_\varepsilon}(t)|dt,$ $\int_0^T|B_5(t)|dt,$ $\int_0^T\int_G|C_4(t,e)|\lambda(de)dt$, as the
elements of $L^2(\Omega)$,
are  all $o(\varepsilon)$. Note that due to the structures of $A_4,A_5, B_4,B_5,C_4$ involving the first- and
second-order derivatives of the coefficients with respect to a measure, hence, the proof is not trivial and far from the classical case.
In the proof of Lemma \ref{le 4.3} we mainly borrow the new estimates given in Corollary {\ref{cor 3.3}} and Corollary {\ref{cor 3.6}}.
We place the proof of Lemma \ref{le 4.3} in Appendix.

\begin{lemma}\label{le 4.3}
We make the same Assumptions as in Theorem \ref{th 4.2}, then the following estimates hold true:
\begin{equation}\label{equ 4.9}
\begin{aligned}
&\mathrm{i)}\quad \mathbb  E\Big[\Big(\int_0^T|A_4(t)\mathbbm{1}_{E_\varepsilon}(t)|dt\Big)^2\Big]+
\mathbb E\Big[\Big(\int_0^T|A_5(t)|dt\Big)^2\Big]
\leq   \varepsilon^2\rho(\varepsilon);\\
&\mathrm{ii)}\quad \mathbb E\Big[\Big(\int_0^T|B_4(t)\mathbbm{1}_{E_\varepsilon}(t)|dt\Big)^2\Big]+
\mathbb E\Big[\Big(\int_0^T|B_5(t)|dt\Big)^2\Big]
\leq \varepsilon^2\rho(\varepsilon);\\
&\mathrm{iii)}\quad \mathbb E\Big[\Big(\int_0^T\int_G|C_4(t,e)|\lambda(de)dt\Big)^2\Big]
\leq  \varepsilon^2\rho(\varepsilon);\\
&\mathrm{iv)}\quad E\Big[
\int_0^T\Big(|M(t)|^2+|B(t)|^2+\int_G|C(t,e)|^2\lambda(de)\Big)\mathbbm{1}_{E_\varepsilon}(t)dt\Big]
\leq \varepsilon\rho(\varepsilon).\\
\end{aligned}
\end{equation}
\end{lemma}

With the help of Lemma \ref{le 4.3}, (\ref{equ 4.8}) can be written as
\begin{equation}\label{equ 4.10}
\begin{aligned}
&Y^1(t)(X^{1,\varepsilon}(t)+X^{2,\varepsilon}(t))+\frac{1}{2}Y^2(t)(X^{1,\varepsilon}(t))^2
=Y^1(T)(X^{1,\varepsilon}(T)+X^{2,\varepsilon}(T))+\frac{1}{2}Y^2(T)(X^{1,\varepsilon}(T))^2\\
&-\int_t^T A(s)ds-\int_t^TB(s)dW(s)-\int_t^T\int_GC^-(s,e)N_\lambda(de,ds)+o(\varepsilon).\\
\end{aligned}
\end{equation}

\noindent For convenience, let us set
\begin{equation}\label{equ 4.10-1}
\begin{aligned}
&\Delta X(t)=X^\varepsilon(t)-X^*(t)-X^{1,\varepsilon}(t)-X^{2,\varepsilon}(t),\quad
\Delta Y(t)=Y^\varepsilon(t)-Y^*(t)-\breve{P}(t)-M(t),\\
&\Delta Z(t)=Z^\varepsilon(t)-Z^*(t)-\breve{Q}(t)-B(t), \quad
\Delta K(t,e)=K^\varepsilon(t,e)-K^*(t,e)-\breve{K}(t,e)-C(t,e).
\end{aligned}
\end{equation}

\noindent By (\ref{equ 4.2}),   (\ref{equ 4.10}), (\ref{equ 4.10-1}) and the definition of $A(s)$, see Appendix, we have
we have
\begin{equation}\label{equ 4.12}
\begin{aligned}
\Delta Y(t)&=\int_t^T\Big\{f(s,X^\varepsilon(s),Y^\varepsilon(s),Z^\varepsilon(s),K^\varepsilon(s,\cdot),P_{(X^\varepsilon(s),Y^\varepsilon(s))},u^\varepsilon(s))\\
           &-f(s,X^*(s),Y^*(s),Z^*(s),K^*(s,\cdot),P_{(X^*(s),Y^*(s))},u^*(s))+A_2(s)+\frac{1}{2}A_3(s)\\
           &-\Big(f_y(s)\breve{P}(s)+f_z(s)\breve{Q}(s)+\int_Gf_k(s)\breve{K}(s,e)\lambda(de)
           +\widetilde{\mathbb E}[f_{\mu_2}(s;\widetilde{\Lambda}^*(s))\widetilde{\breve{P}}(s)]
           +\Delta f(s)\mathbbm{1}_{E_\varepsilon}(s)\Big)\Big\}ds\\
           &-\int_t^T\Delta Z(s)dW(s)-\int_t^T\int_G\Delta K(s,e)N_\lambda(ds,de)+o(\varepsilon),\quad t\in[0,T].
\end{aligned}
\end{equation}

We now analyse
$f(s,X^\varepsilon(s),Y^\varepsilon(s),Z^\varepsilon(s),K^\varepsilon(s,\cdot),P_{(X^\varepsilon(s),Y^\varepsilon(s))},u^\varepsilon(s))-
f(s,X^*(s),Y^*(s),Z^*(s),$  $K^*(s,\cdot),P_{(X^*(s),Y^*(s))},u^*(s))$.
To facilitate the presentation, let $\Lambda^\varepsilon(s)=(X^\varepsilon(s),Y^\varepsilon(s))$.
First, inspired by (\ref{equ 3.24}) and the definitions of $\Delta Y, \Delta Z, \Delta K$, we write\\
\begin{equation}\label{equ 3.29}
\begin{aligned}
&f(s,X^\varepsilon(s),Y^\varepsilon(s),Z^\varepsilon(s),K^\varepsilon(s,\cdot),P_{\Lambda^\varepsilon(s)},u^\varepsilon(s))-
f(s,X^*(s),Y^*(s),Z^*(s),K^*(s,\cdot),P_{\Lambda^*(s)},u^*(s))\\
&=\Delta f(s)\mathbbm{1}_{E_\varepsilon}(s)+I_1(s)+I_2(s),
\end{aligned}
\end{equation}
where
$$
\begin{aligned}
I_1(s)&=f(s,X^\varepsilon(s),Y^\varepsilon(s),Z^\varepsilon(s),K^\varepsilon(s,\cdot),P_{\Lambda^\varepsilon(s)},u^\varepsilon(s))
      -f(s,X^*(s)+X^{1,\varepsilon}(s)+X^{2,\varepsilon}(s),\\
      &\qquad Y^*(s)+\breve{P}(s)+M(s),Z^*(s)+\breve{Q}(s)+B(s), K^*(s,\cdot)+\breve{K}(s,\cdot)+C(s,\cdot), \\
      &\qquad P_{(X^*(s)+X^{1,\varepsilon}(s)+X^{2,\varepsilon}(s),Y^*(s)+\breve{P}(s)+M(s))},u^\varepsilon(s)),\\
 I_2(s)&= f(s,X^*(s)+X^{1,\varepsilon}(s)+X^{2,\varepsilon}(s),
     Y^*(s)+\breve{P}(s)+M(s),Z^*(s)+\breve{Q}(s)+B(s), \\
      &\qquad K^*(s,\cdot)+\breve{K}(s,\cdot)+C(s,\cdot),
      P_{(X^*(s)+X^{1,\varepsilon}(s)+X^{2,\varepsilon}(s),Y^*(s)+\breve{P}(s)+M(s))},u^\varepsilon(s))\\
      &-f(s,X^*(s),Y^*(s),Z^*(s)+Y^1(s)\delta\sigma(s)\mathbbm{1}_{E_\varepsilon}(s),K^*(s,\cdot),P_{\Lambda^*(s)},u^\varepsilon(s)).
      \end{aligned}
$$
Thanks to  the Lipschitz assumption on $f$ and the definitions of $\Delta Y, \Delta Z,\Delta K$ one  can obtain
\begin{equation}\label{equ 4.14}
\begin{aligned}
|I_1(s)|&\leq C\Big(|\Delta X(s)|+|\Delta Y(s)|+|\Delta Z(s)|+(\int_G|\Delta K(s,e)|^2\lambda(de))^\frac{1}{2}\\
&\quad+\big(\mathbb E[|\Delta X(s)|^2]\big)^{\frac{1}{2}}
+\big(\mathbb E[|\Delta Y(s)|^2]\big)^{\frac{1}{2}}\Big).
\end{aligned}
\end{equation}
Now focusing on $I_2(s)$. Obviously,
from the definition of $B(s)$, see Appendix,
$I_2(s)$ can be written as
$$
\begin{aligned}
I_2(s)&=I_3(s)+(I_4(s)-I_3(s))\mathbbm{1}_{E_\varepsilon}(s),
\end{aligned}
$$
here
$$
\begin{aligned}
I_3(s)&=f(s,X^*(s)+X^{1,\varepsilon}(s)+X^{2,\varepsilon}(s), Y^*(s)+\breve{P}(s)+M(s), Z^*(s)+\breve{Q}(s)+B_2(s)+\frac{1}{2}B_3(s),\\
      &\qquad K^*(s,\cdot)+\breve{K}(s,\cdot)+C(s,\cdot), P_{(X^*(s)+X^{1,\varepsilon}(s)+X^{2,\varepsilon}(s), Y^*(s)+\breve{P}(s)+M(s))},u^*(s))\\
      &\quad-f(s,X^*(s),Y^*(s),Z^*(s),K^*(s,\cdot),P_{\Lambda^*(s)},u^*(s)),\\
I_4(s)&=f(s,X^*(s)+X^{1,\varepsilon}(s)+X^{2,\varepsilon}(s), Y^*(s)+\breve{P}(s)+M(s), Z^*(s)+\breve{Q}(s)+B(s), K^*(s,\cdot)\\
           &\qquad\qquad\qquad\qquad\qquad\quad+\breve{K}(s,\cdot)+C(s,\cdot), P_{(X^*(s)+X^{1,\varepsilon}(s)+X^{2,\varepsilon}(s), Y^*(s)+\breve{P}(s)+M(s))},v(s))\\
           &\quad-f(s,X^*(s),Y^*(s),Z^*(s)+Y^1(s)\delta\sigma(s),K^*(s,\cdot),P_{(X^*(s),Y^*(s))},v(s)).
\end{aligned}
$$

\noindent According to Proposition \ref{pro 3.1}, Proposition \ref{pro 4.1}, Lemma \ref{le 4.3}-iv), the definition of $M, B_2, B_3, C$,
see Appendix, as well as the fact $W_2(P_\xi,P_\eta)\leq(E|\xi-\eta|^2)^{\frac{1}{2}},\ \xi,\eta\in L^2(\Omega,\mathcal{F}_T,P)$, we have
\begin{equation}\label{equ 4.15}
\begin{aligned}
&\mathbb E\bigg[\bigg(\int_0^T|I_4(s)-I_3(s)
|\mathbbm{1}_{E_\varepsilon}(s)ds\bigg)^2\bigg]\\
&\leq C\varepsilon \mathbb E\Big[\int_0^T
\bigg(|\breve{P}(s)+M(s)|^2+|\breve{Q}(s)
+B_2(s)+\frac{1}{2}B_3(s)|^2
+\int_G|\breve{K}(s,e)+C(s,e)|^2\lambda(de)\\
&\quad+|X^{1,\varepsilon}(s)
+X^{2,\varepsilon}(s)|^2
+\mathbb E[|X^{1,\varepsilon}(s)
+X^{2,\varepsilon}(s)|^2]
+\mathbb E[|\breve{P}(s)+M(s)|^2]\bigg)\mathbbm{1}_
{\mathbb E_\varepsilon}(s)ds\Big]\\
&\leq C\varepsilon^2\rho(\varepsilon)+C\varepsilon \mathbb E\bigg[\int_0^T(|M(s)|^2+|B_2(s)+
\frac{1}{2}B_3(s)|^2+\int_G|C(s,e)|^2
\lambda(de))\mathbbm{1}_{E_\varepsilon}(s)ds\bigg]\\
&\leq C\varepsilon^2\rho(\varepsilon).
\end{aligned}
\end{equation}

So we now slide to analyse $I_3(s)$. Applying the second-order expansion to $I_3(s)$, see Appendix for further details, we get\\
\begin{equation}\label{equ 4.16}
\begin{aligned}
I_3(s)&=f_y(s)\breve{P}(s)+f_z(s)\breve{Q}(s)+\int_Gf_k(s)\breve{K}(s,e)\lambda(de)+\widetilde{\mathbb E}[f_{\mu_2}(s;\widetilde{\Lambda}^*(s))\widetilde{\breve{P}}(s)] \\
&\quad+(X^{1,\varepsilon}(s)+X^{2,\varepsilon}(s))\Big(f_x(s)+f_y(s)Y^1(s)+f_z(s)(Y^1(s)\sigma_x(s)+Z^1(s))\\
&\quad+\int_Gf_k(s)(Y^1(s)\beta_x(s,e)+R^1(s,e))\lambda(de)\Big)
      +f_z(s)Y^1(s)\widetilde{\mathbb E}[\widetilde{\sigma}_\nu(s;\widetilde{X}^*(s))(\widetilde{X}^{1,\varepsilon}(s)+\widetilde{X}^{2,\varepsilon}(s))]\\
    &\quad+\int_Gf_k(s)Y^1(s)\widetilde{\mathbb E}[\widetilde{\beta}_\nu(s,e;\widetilde{X}^*(s))(\widetilde{X}^{1,\varepsilon}(s)
    +\widetilde{X}^{2,\varepsilon}(s))]\lambda(de)\\
    &\quad+\widetilde{\mathbb E}[f_{\mu_1}(s;\widetilde{\Lambda}^*(s))(\widetilde{X}^{1,\varepsilon}(s)+\widetilde{X}^{2,\varepsilon}(s))]
    +\widetilde{\mathbb E}[f_{\mu_2}(s;\widetilde{\Lambda}^*(s))\widetilde{Y^1}(s)(\widetilde{X}^{1,\varepsilon}(s)+\widetilde{X}^{2,\varepsilon}(s))]\\
    &\quad+\frac{1}{2}(X^{1,\varepsilon}(s))^2\bigg(f_y(s)Y^2(s)+f_z(s)\Big(Y^1(s)\sigma_{xx}(s)+2Y^2(s)\sigma_x(s)+Z^2(s)\Big)\\
   &\qquad\qquad\quad  +\int_G f_k(s)\big(Y^1(s)\beta_{xx}(s,e)+Y^2(s)(2\beta_x(s,e)+(\beta_x(s,e))^2)+R^2(s,e)\big)\lambda(de)\bigg)\\
    &\quad+\frac{1}{2}\Big(f_z(s)Y^1(s)\widetilde{\mathbb E}[\sigma_{\nu a}(s;\widetilde{X}^*(s))(\widetilde{X}^{1,\varepsilon}(s))^2]
    +\int_Gf_k(s)Y^1(s)\widetilde{\mathbb E}[\beta_{\nu a}(s,e;\widetilde{X}^*(s))(\widetilde{X}^{1,\varepsilon}(s))^2]\lambda(de)\\
   &\qquad\qquad\qquad\qquad\qquad\qquad\qquad\qquad\qquad\qquad\qquad\qquad\qquad
   +\widetilde{\mathbb E}[f_{\mu_2}(s;\widetilde{\Lambda}^*(s))\widetilde{Y}^1(s)(\widetilde{X}^{1,\varepsilon}(s))^2]\Big)\\
   &\quad+O(s)D^2f(s)O^\intercal(s)(X^{1,\varepsilon}(s))^2
   +\frac{1}{2}\widetilde{\mathbb E}\Big[\Big(f_{\mu_2a_2}(s;\widetilde{\Lambda}^*(s))(\widetilde{Y}^1(s))^2+f_{\mu_1a_1}(s;\widetilde{\Lambda}^*(s))\Big)
   (\widetilde{X}^{1,\varepsilon}(s))^2\Big]\\
   &\quad+I_{5}(s),
 \end{aligned}
\end{equation}
where $\mathbb E[(\int_0^TI_5(s)ds)^2]\leq \varepsilon^2 \rho(\varepsilon).$\\
Consequently, combing all the above analyses and recall the definitions of $A_2(s), A_3(s)$, see Appendix,
 the equation (\ref{equ 4.12}) can read as
\begin{equation}\label{equ 3.37}
\begin{aligned}
\Delta Y(t)&=\int_t^T\Big(I_1(s)+(I_4(s)-I_3(s))\mathbbm{1_{E_\varepsilon}}(s)+I_{5}(s)\Big)ds
-\int_t^T\Delta Z(s)dW(s)\\
&-\int_t^T\int_G\Delta K(s,e)N_\lambda(de,ds)+o(\varepsilon),\ t\in[0,T].
\end{aligned}
\end{equation}
It follows from (\ref{equ 4.14}), (\ref{equ 4.15}), (\ref{equ 4.16})
and Gronwall inequality that
$$
\begin{aligned}
&\mathbb E\Big[\sup_{s\in[0,T]}|\Delta Y(s)|^2+\int_0^T|\Delta Z(s)|^2ds+\int_0^T\int_G|\Delta K(s,e)|^2\lambda(de)ds\Big]\\
&\leq C\mathbb E\Big[(\int_0^T|(I_4(s)-I_3(s))\mathbbm{1_{E_\varepsilon}}(s)+I_5(s)|ds)^2\Big]+o(\varepsilon^2)\leq \varepsilon^2\rho(\varepsilon).
\end{aligned}
$$
The proof is completed.
\end{proof}

\begin{remark}\label{re 4.5}
If $f$ does not depend on $(y,z,k)$ and just depends on the law of $X^*(t)$, not on that of $Y^*(t)$, as well as
$\beta\equiv0$, then (\ref{equ 4.2}) is of the form
\begin{equation}\label{equ 6.1}
  \left\{
\begin{aligned}
-d\breve{P}(t)&=\Big(Y^1(t)\delta b(t)+Z^1(t)\delta \sigma(t)+\frac{1}{2}Y^2(t)(\delta\sigma(t))^2
+f(t,X^*(t),P_{X^*(t)},v(t))\\
&\quad-f(t,X^*(t),P_{X^*(t)},u^*(t))\Big)dt-\breve{Q}(t)dW(t),\ t\in[0,T],\\
\breve{Y}(T)&=0,
\end{aligned}
\right.
\end{equation}
which is just right  the case investigated by Buckdahn, Li and Ma \cite{BLM}, and, accordingly,
our stochastic maximum principle is consistent with theirs.

\end{remark}

\section{Stochastic maximum principle}

In this section, the second main result of this paper--SMP is proved.

\underline{\emph{Hamiltonian function}} We define
\begin{equation}\label{equ 3.38}
\begin{aligned}
&H(t,x,y,z,k,\nu,\mu,v;p,q,P)\\
&\quad=pb(t,x,\nu,v)+q\sigma(t,x,\nu,v)
+\frac{1}{2}P\Big(\sigma(t,x,\nu,v)-\sigma(t,X^*(t),P_{X^*(t)},u^*(t))\Big)^2\\
&\quad+f\Big(t,x,y,z+p\Big(\sigma(t,x,\nu,v)-\sigma(t,X^*(t),P_{X^*(t)},u^*(t))\Big), k,\mu, v\Big),
\end{aligned}
\end{equation}
$(t,x,y,z,k,\nu,\mu,v,p,q,P)\in[0,T]\times\mathbb{R}^3\times L^2(G,\mathscr{B}(G),\lambda)
\times \mathcal{P}_2(\mathbb{R})\times\mathcal{P}_2(\mathbb{R}^2)\times U \times\mathbb{R}^3.$\\

We now state the SMP.
\begin{theorem}
Let the Assumptions (A3.1)-(A3.4) hold true, and, furthermore, let
$$\widetilde{f}_{\mu_2}(t)=(\frac{\partial f}{\partial \mu})_2(t,\widetilde{X}^*(t),\widetilde{Y}^*(t),
\widetilde{Z}^*(t),\widetilde{K}^*(t,\cdot),P_{(X^*(t),Y^*(t))},\widetilde{u}^*(t);X^*(t),Y^*(t))>0,$$
$\ t\in[0,T],\ \widetilde{P}\otimes P$-a.s., and
$$f_k(t)=\frac{\partial f}{\partial k}(t,X^*(t), Y^*(t), Z^*(t), K^*(t,\cdot), P_{(X^*(t),Y^*(t))},u^*(t))>0,$$
$t\in[0,T], P$-a.s.
Suppose that $u^*(\cdot)$ is the
optimal control and $(X^*,Y^*,Z^*,K^*)$ is the corresponding solution of (\ref{equ 1.1}). Then there exist two pairs of stochastic
processes $(Y^1,Z^1,R^1)$ and $(Y^2,Z^2,R^2)$ satisfying (\ref{equ 3.28}) and (\ref{equ 3.30}), respectively, such that
\begin{equation}\label{equ 5.2}
\begin{aligned}
&H(t,X^*(t),Y^*(t),Z^*(t),K^*(t,\cdot),P_{X^*(t)}, P_{(X^*(t),Y^*(t))},v;Y^1(t),Z^1(t),Y^2(t))\\
&\geq H(t,X^*(t),Y^*(t),Z^*(t),K^*(t,\cdot),P_{X^*(t)}, P_{(X^*(t),Y^*(t))},u^*(t);Y^1(t),Z^1(t),Y^2(t)),
\end{aligned}
\end{equation}
\indent \qquad\qquad\qquad\qquad\qquad\qquad\qquad\qquad\qquad\qquad\qquad\qquad\qquad
 $\forall v\in U$,\  a.e.,\ a.s.
\end{theorem}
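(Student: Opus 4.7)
The strategy is a duality argument built on the second-order expansion of Theorem \ref{th 4.2}. Optimality of $u^*$ gives $J(u^\varepsilon)\geq J(u^*)$, and since $X^{1,\varepsilon}(0)=X^{2,\varepsilon}(0)=0$, evaluating (\ref{equ 4.11}) at $t=0$ (and, if needed, taking $\mathbb E$) yields
\begin{equation*}
\mathbb E[\breve P(0)] + o(\varepsilon) \;\geq\; 0.
\end{equation*}
Everything therefore reduces to representing $\mathbb E[\breve P(0)]$ as an integral against the spike and reading off a pointwise sign condition on $v$. All the heavy analytic work (Proposition \ref{pro 3.1}, Lemmas \ref{le 3.2}--\ref{le 4.3}, Proposition \ref{pro 4.1}, Theorem \ref{th 4.2}) has already isolated $\breve P$ as the only quantity carrying the $O(\varepsilon)$ first-order information of the spike variation.

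For the duality, I will introduce the mean-field linear dual SDE
\begin{equation*}
\begin{aligned}
dp(t) &= p(t)\Big(f_y(t)\,dt + f_z(t)\,dW(t) + \int_G f_k(t,e)\,N_\lambda(de,dt)\Big) + \widetilde{\mathbb E}\big[\tilde p(t)\widetilde f_{\mu_2}(t)\big]\,dt,\\
p(0)&=1,
\end{aligned}
\end{equation*}
whose multiplicative part is a Dol\'eans--Dade exponential with jump factor $1+f_k(t,e)>0$ (by hypothesis) and whose added McKean--Vlasov drift couples $p$ to a copy $\tilde p$ on $(\widetilde\Omega,\widetilde{\mathcal F},\widetilde P)$. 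Applying It\^o's formula for semimartingales with jumps to $p(t)\breve P(t)$, taking $\mathbb E$, and using $\breve P(T)=0$, the drift reduces to
\begin{equation*}
-p(t)\big(A_1(t)+\Delta f(t)\big)\mathbbm 1_{E_\varepsilon}(t) + \Big\{\breve P(t)\widetilde{\mathbb E}[\tilde p(t)\widetilde f_{\mu_2}(t)] - p(t)\widetilde{\mathbb E}\big[f_{\mu_2}(t;\widetilde\Lambda^*(t))\widetilde{\breve P}(t)\big]\Big\}.
\end{equation*}
The bracketed mean-field cross terms cancel after integrating against $\mathbb E\widetilde{\mathbb E}$, by a Fubini-type relabelling of the iid copies $(\Omega,\mathcal F,P)$ and $(\widetilde\Omega,\widetilde{\mathcal F},\widetilde P)$; this is precisely why the drift of the dual SDE contains $\widetilde f_{\mu_2}$ while (\ref{equ 4.2}) contains $f_{\mu_2}(\cdot;\widetilde\Lambda^*(\cdot))$. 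One concludes
\begin{equation*}
\mathbb E[\breve P(0)] \;=\; \mathbb E\Big[\int_0^T p(t)\big(A_1(t)+\Delta f(t)\big)\mathbbm 1_{E_\varepsilon}(t)\,dt\Big].
\end{equation*}

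Next I will prove the strict positivity $p(t)>0$ for a.e.\ $(t,\omega)$. Without the drift $\widetilde{\mathbb E}[\tilde p\,\widetilde f_{\mu_2}]$, $p$ would be a positive Dol\'eans--Dade exponential; with the mean-field coupling this is no longer automatic, but the sign hypothesis $\widetilde f_{\mu_2}>0$ permits a sign-propagation argument. The plan is to set $p_-(t)=(-p(t))\vee 0$ and derive a coupled linear Gronwall estimate relating $\mathbb E[p_-(t)]$ and $\widetilde{\mathbb E}[\tilde p_-(t)]$; combined with $p(0)=1$, this forces $p_-\equiv 0$. Once $p>0$ is in hand, the averaged inequality $\mathbb E\big[\int_{E_\varepsilon}p(t)(A_1(t)+\Delta f(t))\,dt\big]\geq o(\varepsilon)$ holds for every admissible $v(\cdot)$ and every Borel $E_\varepsilon\subset[0,T]$ with $|E_\varepsilon|=\varepsilon$; a standard Lebesgue-point/conditional-expectation argument (shrink $E_\varepsilon=[t_0,t_0+\varepsilon]$ and restrict the variation to $v\equiv u\in U$ on $E_\varepsilon\cap A$ for $A\in\mathcal F_{t_0}$) upgrades this to the pointwise bound $A_1(t)+\Delta f(t)\geq 0$, a.e., a.s. A one-line expansion from (\ref{equ 3.38}) identifies this with $H(\cdot\,;v)-H(\cdot\,;u^*)\geq 0$, which is (\ref{equ 5.2}).

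The main obstacle is the strict positivity of the mean-field dual $p$, flagged by the authors in Section 1 c) as the genuinely new difficulty absent from the classical Hu \cite{Hu} setting: there, $p$ is simply a Dol\'eans--Dade exponential, whereas here the McKean--Vlasov drift $\widetilde{\mathbb E}[\tilde p\,\widetilde f_{\mu_2}]$ breaks the multiplicative structure and one must invoke $\widetilde f_{\mu_2}>0$ essentially. A secondary but important subtlety is the exchange-symmetry cancellation of the two mean-field cross terms in the It\^o expansion; this is exactly the reason for introducing the tilde copies and for the asymmetric placement of $\widetilde f_{\mu_2}$ versus $f_{\mu_2}(\cdot;\widetilde\Lambda^*(\cdot))$ in the adjoint systems (\ref{equ 3.28}) and (\ref{equ 4.2}).
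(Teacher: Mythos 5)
Your proposal follows the paper's own proof almost step for step: the reduction to $\breve P(0)+o(\varepsilon)\geq 0$ via $J(u^\varepsilon)\geq J(u^*)$ and $X^{1,\varepsilon}(0)=X^{2,\varepsilon}(0)=0$; the mean-field dual SDE (your $p$ is exactly the paper's $\Upsilon$ in (\ref{equ 5.5})); the duality identity $\breve P(0)=\mathbb E\big[\int_0^T\Upsilon(s)(A_1(s)+\Delta f(s))\mathbbm 1_{E_\varepsilon}(s)\,ds\big]$, including the Fubini relabelling that cancels the two mean-field cross terms (which the paper uses but does not spell out); the identification of $A_1+\Delta f$ with $H(\cdot\,;v)-H(\cdot\,;u^*)$; and the Lebesgue-point localization, which the paper also leaves implicit.

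The one place where your argument does not close is the strict positivity of the dual, which you yourself single out as the main obstacle. Your negative-part Gronwall scheme (bound $-\widetilde{\mathbb E}[\tilde p\,\widetilde f_{\mu_2}]\leq \widetilde{\mathbb E}[\tilde p^{-}\widetilde f_{\mu_2}]\leq C\,\mathbb E[p^{-}]$ and conclude $p^{-}\equiv 0$) is sound, but it only yields $p\geq 0$: nothing in it prevents $p$ from vanishing on a set of positive $dt\otimes dP$-measure, and on such a set the averaged inequality $\mathbb E\big[\int_{E_\varepsilon}p\,(A_1+\Delta f)\,dt\big]\geq o(\varepsilon)$ carries no information about the sign of $A_1+\Delta f$, so (\ref{equ 5.2}) does not follow there. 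The step from ``$p_-\equiv 0$'' to ``once $p>0$ is in hand'' is therefore a genuine gap. The paper closes it by comparison: it introduces the auxiliary non-mean-field equation (\ref{equ 5.7}) for $\Upsilon^1$, a Dol\'eans--Dade exponential that is strictly positive because $f_k>0$, and proves $\Upsilon\geq\Upsilon^1>0$ by applying It\^o's formula to $((\Upsilon^1-\Upsilon)^+)^2$ and Gronwall, using $\widetilde f_{\mu_2}>0$ together with $\Upsilon^1>0$ to control the sign of the mean-field drift. Equivalently, after your nonnegativity step you could regard $g(t):=\widetilde{\mathbb E}[\tilde p(t)\widetilde f_{\mu_2}(t)]\geq 0$ as a known nonnegative source term and use variation of constants to conclude $p\geq\Upsilon^1>0$. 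Either way, a comparison against the strictly positive exponential is still required to finish the proof.
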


\begin{proof}
According to $J(v(\cdot))=Y^v(0)$, (\ref{equ 4.11}) and  $X^{1,\varepsilon}(0)=X^{2,\varepsilon}(0)=0$  we have
\begin{equation}\label{equ 5.3}
J(u^\varepsilon(\cdot))-J(u^*(\cdot))=Y^\varepsilon(0)-Y^*(0)=\breve{P}(0)+o(\varepsilon)\geq0.
\end{equation}
Recall that
\begin{equation}\label{equ 5.4}
  \left\{
   \begin{aligned}
-d\breve{P}(t)&=\Big\{f_y(t)\breve{P}(t)+f_z(t)\breve{Q}(t)+\int_Gf_k(t)\breve{K}(t,e)\lambda(de)
+\widetilde{\mathbb E}[f_{\mu_2}(t;\widetilde{\Lambda}^*(t))\widetilde{\breve{P}}(t)]\\
              &\quad+(A_1(t)+\Delta f(t))\mathbbm{1}_{E_\varepsilon}(t)\Big\}dt
              -\breve{Q}(t)dW(t)-\int_G\breve{K}(t,e)N_\lambda(de,dt),\ t\in[0,T],\\
\breve{P}(T)&=0,
\end{aligned}
   \right.
\end{equation}
which, however,  inspires us to consider the dual mean-field SDE with jumps:
\begin{equation}\label{equ 5.5}
  \left\{
   \begin{aligned}
d\Upsilon(t)&=\Big(f_y(t)\Upsilon(t)+\widetilde{\mathbb E}[\widetilde{f}_{\mu_2}(t)\widetilde{\Upsilon}(t)]\Big)dt
+f_z(t)\Upsilon(t)dW(t)+\int_Gf_k(t)\Upsilon(t)N_\lambda(de,dt),\ t\in[0,T],\\
\Upsilon(0)&=1.
\end{aligned}
   \right.
\end{equation}
Applying It\^{o}'s formula to $\Upsilon(t)\breve{P}(t),\ t\in[0,T]$, it follows
\begin{equation}\label{equ 5.6}
\breve{P}(0)=\mathbb E\Big[\int_0^T\Upsilon(s)(A_1(s)+\Delta f(s) )\mathbbm{1}_{E_\varepsilon}(s) ds\Big].
\end{equation}
But, $P$-a.s.
\begin{equation}\label{equ 5.6-1}
\Upsilon(s)>0,\ s\in[0,T].
\end{equation}
In fact,
consider the auxiliary BSDE with jumps
\begin{equation}\label{equ 5.7}
  \left\{
   \begin{aligned}
d\Upsilon^1(t)&=f_y(t)\Upsilon^1(t)dt+f_z(t)\Upsilon^1(t)dW(t)+\int_Gf_k(t)\Upsilon^1(t)N_\lambda(de,dt),\ t\in[0,T],\\
\Upsilon^1(0)&=1.
\end{aligned}
   \right.
\end{equation}
Denote  $\Delta\Upsilon(t)=\Upsilon^1(t)-\Upsilon(t),$ then from It\^{o}'s formula and the assumption $f_k(t)>0,\ t\in[0,T]$ we have
\begin{equation}\label{equ 5.8}
\begin{aligned}
 d((\Delta\Upsilon(t))^+)^2&=2\mathbbm{1}_{\{\Delta\Upsilon(t)>0\}} \Delta\Upsilon(t)\Big\{
 f_y(t)\Delta\Upsilon(t)-\widetilde{\mathbb E}[\widetilde{f}_{\mu_2}(t)\widetilde{\Upsilon}(t)]+f_z(t)\Delta \Upsilon(t)dW(t)\\
 &+\int_Gf_k(t)\Delta\Upsilon(t)N_\lambda(de,dt)\Big\}
 +\mathbbm{1}_{\{\Delta\Upsilon(t)>0\}}(f_z(t))^2(\Delta\Upsilon(t))^2dt\\
 &+\int_G\mathbbm{1}_{\{\Delta\Upsilon(t)>0\}}(f^-_k(t))^2(\Delta\Upsilon(t-))^2N(de,dt),
\end{aligned}
\end{equation}
where  $f^-_k(t)=\frac{\partial f}{\partial k}(s,\Pi^*(s-),P_{\Lambda^*(s)},u^*(s))$.\\
Moreover, notice $\widetilde{f}_{\mu_2}(s)>0,\ s\in[0,T],\ \widetilde{P}\otimes P$-a.s., and $\Upsilon^1(s)>0,\ s\in[0,T],\ P$-a.s., hence, for $s\in[0,T]$,
$\widetilde{\mathbb E}[\widetilde{f}_{\mu_2}(s)\widetilde{\Upsilon}^1(s)]>0$, and then
\begin{equation}\label{equ 5.9}
\begin{aligned}
\mathbb E[((\Delta\Upsilon(t))^+)^2]&\leq \mathbb E\Big[\int_0^t2\mathbbm{1}_{\{\Delta\Upsilon(s)>0\}}\Delta\Upsilon(s)
\{f_y(s)\Delta\Upsilon(s)+\widetilde{\mathbb E}[\widetilde{f}_{\mu_2}(s)\Delta\widetilde{\Upsilon}(s)]\}ds\Big]\\
 &\quad+\mathbb E\Big[\int_0^t\mathbbm{1}_{\{\Delta\Upsilon(s)>0\}}(f_z(s))^2(\Delta\Upsilon(s))^2ds\Big]\\
 &\quad+\mathbb E\Big[\int_0^t\int_G\mathbbm{1}_{\{\Delta\Upsilon(s)>0\}}(f_k(s))^2(\Delta\Upsilon(s))^2\lambda(de)ds\Big]\\
 &\leq C\mathbb E\Big[\int_0^t((\Delta\Upsilon(s))^+)^2ds\Big].
\end{aligned}
\end{equation}
Then the Gronwall lemma implies $\Delta\Upsilon(s)=\Upsilon^1(s)-\Upsilon(s)\leq0,\ s\in[0,T].$ On
the other hand, the assumption $f_k(t)>0, t\in[0,T]$ can show $\Upsilon^1(s)>0, s\in[0,T],\ P$-a.s. Hence,
$\Upsilon(s)\geq\Upsilon^1(s)>0,\ s\in[0,T],\ P$-a.s.
Combining (\ref{equ 5.3}), (\ref{equ 5.6}) and (\ref{equ 5.6-1}) we have the desired result.
\end{proof}

\section{Appendix}
\subsection{Some notations}

The aim of this subsection is to collect some notations used in this paper, in particular, in Section 4:
$$
\begin{aligned}
M(t)&=Y^1(t)(X^{1,\varepsilon}(t)+X^{2,\varepsilon}(t))+\frac{1}{2}Y^2(t)(X^{1,\varepsilon}(t))^2,\\
A_1(t)&=Y^1(t)\delta b(t)+Z^1(t)\delta\sigma(t)+\frac{1}{2}Y^2(t)(\delta\sigma(t))^2,\\
A_2(t) &=\Big(Y^1(t)b_x(t)+Z^1(t)\sigma_x(t)+\int_GR^1(t,e)\beta_x(t,e)\lambda(de)-F(t)\Big)(X^{1,\varepsilon}(t)+X^{2,\varepsilon}(t))\\
&\quad+\widetilde{\mathbb E}\Big[\Big(Y^1(t)b_{\nu}(t;\widetilde{X}^*(t))+Z^1(t)\sigma_{\nu}(t;\widetilde{X}^*(t))+\int_GR^1(t,e)
\beta_{\nu}(t,e;\widetilde{X}^*(t))\lambda(de)\Big)\\
&\qquad(\widetilde{X}^{1,\varepsilon}(t)+\widetilde{X}^{2,\varepsilon}(t))\Big],\\
A_3(t)
&=(X^{1,\varepsilon}(t))^2\Big(b_{xx}(t)Y^1(t)+\sigma_{xx}(t)Z^1(t)+\int_G\beta_{xx}(t,e)R^1(t,e)\lambda(de)+2Y^2(t)b_x(t)\\
&\quad+Y^2(t)(\sigma_x(t))^2+\int_GY^2(t)(\beta_x(t,e))^2\lambda(de)
+2Z^2(t)\sigma_x(t)+\int_GR^2(t,e)\big(2\beta_x(s,e)\\
&\quad+(\beta(t,e))^2\big)\lambda(de)-G(t)\Big)
+\widetilde{\mathbb E}\Big[\Big(Y^1(t)b_{\nu a}(t;\widetilde{X}^*(t))+Z^1(t) \sigma_{\nu a}(t;\widetilde{X}^*(t))\\
&\quad+\int_GR^1(t,e)\beta_{\nu a}(t,e;\widetilde{X}^*(t))\lambda(de)\Big) (\widetilde{X}^{1,\varepsilon}(t))^2 \Big],\\
A_4(t) &=X^{1,\varepsilon}(t)\Big\{Y^1(t)\delta b_x(t)+Z^1(t)\delta\sigma_x(t)+Y^2(t)\Big(\delta b(t)+\delta\sigma(t)\sigma_x(t)\Big)+Z^2(t)\delta\sigma(t)\Big\}\\
&\quad +\widetilde{\mathbb E}\Big[\widetilde{X}^{1,\varepsilon}(t)\Big\{Y^1(t)\delta b_\nu(t;\widetilde{X}^*(t))+Z^1(t)\delta\sigma_\nu(t;\widetilde{X}^*(t)) +Y^2(t)\delta\sigma(t)\sigma_\nu(t;\widetilde{X}^*(t))\Big\}\Big],\\
A_5(t)&=Y^2(t)X^{1,\varepsilon}(t)\widetilde{\mathbb E}[b_\nu(t;\widetilde{X}^*(t)) \widetilde{X}^{1,\varepsilon}(t)]
+\frac{1}{2}Y^2(t)\Big(\widetilde{\mathbb E}[\sigma_\nu(t;\widetilde{X}^*(t)) \widetilde{X}^{1,\varepsilon}(t)] \Big)^2\\
&\quad +Y^2(t)\sigma_x(t)X^{1,\varepsilon}(t)\widetilde{\mathbb E}[\sigma_\nu(t;\widetilde{X}^*(t))\widetilde{X}^{1,\varepsilon}(t)]
+Z^2(t)X^{1,\varepsilon}(t)\widetilde{\mathbb E}[\sigma_\nu(t;\widetilde{X}^*(t))\widetilde{X}^{1,\varepsilon}(t)]\\
&\quad  +\frac{1}{2}\int_G\Big(Y^2(t)\Big(\widetilde{\mathbb E}[\beta_\nu(t,e;\widetilde{X}^*(t))\widetilde{X}^{1,\varepsilon}(t)] \Big)^2
+Y^2(t)\beta_x(t,e)X^{1,\varepsilon}(t)\widetilde{\mathbb E}[\beta_\nu(t,e;\widetilde{X}^*(t))\\
&\qquad  \widetilde{X}^{1,\varepsilon}(t)]\Big)\lambda(de)+\int_G\Big(R^2(t,e)(\beta_x(t,e)+1)
X^{1,\varepsilon}(t)\widetilde{\mathbb E}[\beta_\nu(t,e;\widetilde{X}^*(t)) \widetilde{X}^{1,\varepsilon}(t)]\Big)\lambda(de),\\
B_1(t)&=Y^1(t)\delta \sigma(t),\\
B_2(t)&=(Y^1(t)\sigma_x(t)+Z^1(t))(X^{1,\varepsilon}(t)+X^{2,\varepsilon}(t))+Y^1(t)\widetilde{\mathbb E}
[\sigma_\nu(t;\widetilde{X}^*(t))(\widetilde{X}^{1,\varepsilon}(t)+\widetilde{X}^{2,\varepsilon}(t))],\\
B_3(t)&=(Y^1(t)\sigma_{xx}(t)+2Y^2(t)\sigma_x(t)+Z^2(t))(X^{1,\varepsilon}(t))^2+Y^1(t)\widetilde{\mathbb E}
[\sigma_{\nu a}(t,\widetilde{X}^*(t))(\widetilde{X}^{1,\varepsilon}(t))^2],\\
B_4(t)&=(Y^1(t)\delta\sigma_x(t)+Y^2(t)\delta\sigma(t))X^{1,\varepsilon}(t)
+\widetilde{\mathbb E}[Y^1(t)\delta\sigma_\nu(t;\widetilde{X}^*(t))\widetilde{X}^{1,\varepsilon}(t)],\\
B_5(t)&=Y^2(t)X^{1,\varepsilon}(t)\widetilde{\mathbb E}[\sigma_\nu(t,\widetilde{X}^*(t))\widetilde{X}^{1,\varepsilon}(t)],\\
C_2(t,e)&=\Big(Y^1(t)\beta_x(t,e)+R^1(t,e)\Big)(X^{1,\varepsilon}(t)+X^{2,\varepsilon}(t))
                   +Y^1(t)\widetilde{\mathbb E}[\beta_\nu(t,e;\widetilde{X}^*(t))(\widetilde{X}^{1,\varepsilon}(t)+\widetilde{X}^{2,\varepsilon}(t))],\\
\end{aligned}
$$
$$
\begin{aligned}
C_3(t,e)&=\Big(Y^1(t)\beta_{xx}(t,e)+Y^2(t)\big(2\beta_x(t,e)+(\beta_x(t,e))^2\big)+R^2(t,e)\Big)(X^{1,\varepsilon}(t))^2\\
         &\quad+Y^1(t)\widetilde{\mathbb E}[\beta_{\nu a}(t,e;\widetilde{X}^*(t))(\widetilde{X}^{1,\varepsilon}(t))^2],\\
  C_4(t,e)&=Y^2(t)X^{1,\varepsilon}(t)\widetilde{\mathbb E}[\beta_\nu(t,e;\widetilde{X}^*(t))\widetilde{X}^{1,\varepsilon}(t)]
+\frac{1}{2}Y^2(t)(\widetilde{\mathbb E}[\beta_\nu(t,e;\widetilde{X}^*(t))\widetilde{X}^{1,\varepsilon}(t)])^2\\
&\quad+Y^2(t)\beta_x(t,e)X^{1,\varepsilon}(t) \widetilde{\mathbb E}[\beta_\nu(t,e;\widetilde{X}^*(t))\widetilde{X}^{1,\varepsilon}(t)],
\end{aligned}
$$
and we denote
$$
\begin{aligned}
A(s)&=A_1(s)\mathbbm{1}_{E_\varepsilon}(s)+A_2(s)+\frac{1}{2}A_3(s),\ \
C(s,e)=C_2(s,e)+\frac{1}{2}C_3(s,e),\\
B(s)&=B_1(s)\mathbbm{1}_{E_\varepsilon}(s)+B_2(s)+\frac{1}{2}B_3(s).
\end{aligned}
$$
Here $F(t)$ and $G(t)$ are given in (\ref{equ 3.28}) and (\ref{equ 3.30}), respectively,
and  $C^-(s,e)$ denotes the time $s$ for the stochastic processes in $C(s,e)$ instead by $s-$.

\subsection{Proof of Lemma \ref{le 4.3}}

As for i) of (\ref{equ 4.9}), by observing the structures of $A_4(s)$ and $A_5(s)$, we mainly
work out the central ingredients, i.e.,  those terms involving the derivatives of the coefficients with respect to the measure.

a$_1$) From the boundness of $\sigma_\nu$, (\ref{equ 3.4}) and Dominated Convergence Theorem, it follows
\begin{equation}\label{equ 6.1}
\begin{aligned}
&\mathbb E\bigg[\bigg(\int_0^T\mathbbm{1}_{E_\varepsilon}(t)\widetilde{\mathbb E}[\widetilde{X}^{1,\varepsilon}
(t)Z^1(t)\delta\sigma_\nu
(t;\widetilde{X}^*(t))]dt\bigg)^2\bigg ]
\leq \varepsilon \mathbb E\bigg[\int_0^T\mathbbm{1}_{E_\varepsilon}(t)|Z^1(t)|^2\mathbb E[|X^{1,\varepsilon}(t)|^2]dt\bigg ]\\
&\leq \varepsilon \mathbb E\bigg[\sup_{t\in[0,T]}|X^{1,\varepsilon}(t)|^2\bigg ]\mathbb E\bigg[\int_0^T\mathbbm{1}_{E_\varepsilon}(t)|Z^1(t)|^2dt\bigg ]\leq\varepsilon^2\rho_1(\varepsilon),
\end{aligned}
\end{equation}
where $\rho_1(\varepsilon):=\mathbb E\bigg[\int_0^T\mathbbm{1}_{E_\varepsilon}(t)(Z^1(t))^2dt\bigg ]\rightarrow0,$ as $\varepsilon\rightarrow0.$

a$_2$) According to the boundness of $\sigma$, H\"{o}lder inequality and the estimates (\ref{equ 3.23}), (\ref{equ 3.31}), we obtain
\begin{equation}\label{equ 6.2}
\begin{aligned}
&\mathbb E\bigg[\bigg(\int_0^T\mathbbm{1}_
{E_\varepsilon}(t)\widetilde{\mathbb E}[\widetilde{X}^{1,\varepsilon}(t)Y^2(t)
\delta\sigma(t)\sigma_\nu(t;\widetilde{X}^*
(t)) ]dt\bigg)^2\bigg ]\\
&\leq C\varepsilon \mathbb E\bigg[\sup_{t\in[0,T]}|Y^2(t)|^2\int_0^T\mathbbm{1}_{E_\varepsilon}(t)|\widetilde{\mathbb E}[\widetilde{X}^{1,\varepsilon}(t)\sigma_\nu(t;\widetilde{X}^*(t))]|^2dt\bigg ]\\
&\leq C\varepsilon^{\frac{3}{2}} \Big\{\mathbb E[\sup_{t\in[0,T]}|Y^2(t)|^4 ]\Big\}^{\frac{1}{2}}
\Big\{\mathbb E\int_0^T|\widetilde{\mathbb E}[\widetilde{X}^{1,\varepsilon}(t)\sigma_\nu(t;\widetilde{X}^*(t))]|^4dt\Big\}^{\frac{1}{2}}\leq C\varepsilon^\frac{5}{2}\rho(\varepsilon).
\end{aligned}
\end{equation}

a$_3$) Thanks to the boundness of $\sigma_{\nu}$, (\ref{equ 3.4}) and (\ref{equ 3.23}), one can check
\begin{equation}\label{equ 6.3}
\begin{aligned}
&\mathbb E\bigg[\Big(\int_0^TY^2(t)
(\widetilde{\mathbb E}[\sigma_\nu(t;\widetilde{X}^*(t))
\widetilde{X}^{1,\varepsilon}(t)])^2dt
\Big)^2
\bigg]\\
&\leq CE\Big[(
\int_0^T|Y^2(t)|\mathbb E [|X^{1,\varepsilon}(t)| ]|
\widetilde{\mathbb E}[\sigma_\nu(t;\widetilde{X}^*(t))\widetilde{X}^{1,\varepsilon}(t)]|dt)^2\Big]\\
&\leq C \varepsilon \mathbb E\Big[\sup_{t\in[0,T]}|Y^2(t)|^2\cdot\int_0^T|\widetilde{\mathbb E}[\sigma_\nu(t;\widetilde{X}^*(t))\widetilde{X}^{1,\varepsilon}(t)]|^2dt\Big]\\
&\leq C \varepsilon \Big\{\mathbb E\Big[\sup_{t\in[0,T]}|Y^2(t)|^4\Big ]\Big\}^{\frac{1}{2}}
\Big\{\mathbb E [\int_0^T |\widetilde{\mathbb E}[\sigma_\nu(t;\widetilde{X}^*(t))
\widetilde{X}^{1,\varepsilon}(t) ] |^4dt ]\Big\}^{\frac{1}{2}}\\
&\leq C \varepsilon^2\rho(\varepsilon).
\end{aligned}
\end{equation}

a$_4$) The Assumptions (A3.1)-(A3.2), (\ref{equ 3.4})  and (\ref{equ 3.23})-ii) allow to show
\begin{equation}\label{equ 6.4}
\begin{aligned}
&\mathbb E\Big[ \Big(\int_0^T\int_GY^2(t)\beta_{x}(t,e)X^{1,\varepsilon}(t)\widetilde{\mathbb E}[\beta_{\nu}(t,e;\widetilde{X}^*(t)\widetilde{X}^{1,\varepsilon}(t))]\lambda(de)ds       \Big)^2\Big]\\
&\leq \mathbb E\Big[\sup_{t\in[0,T]}|X^{1,\varepsilon}(t)|^2\sup_{t\in[0,T]}|Y^2(t)|^2
\Big(\int_0^T\int_G(1\wedge|e|)|\widetilde{\mathbb E}[\beta_{\nu}(t,e;\widetilde{X}^*(t))\widetilde{X}^{1,\varepsilon}(t)]|\lambda(de)dt\Big)^2\Big]\\
&\leq  C \mathbb E\Big[\sup_{t\in[0,T]}|X^{1,\varepsilon}(t)|^2\sup_{t\in[0,T]}|Y^2(t)|^2
\Big(\int_0^T\int_G|\widetilde{\mathbb E}[\beta_{\nu}(t,e;\widetilde{X}^*(t)\widetilde{X}^{1,\varepsilon}(t))]|^4\lambda(de)dt\Big)^\frac{1}{2}\Big]\\
&\leq C  \Big\{\mathbb E[\sup_{t\in[0,T]}|X^{1,\varepsilon}(t)|^8]\Big\}^\frac{1}{4} \Big\{\mathbb E[\sup_{t\in[0,T]}|Y^2(t)|^8]\Big\}^\frac{1}{4}
\Big\{\mathbb E\int_0^T\int_G|\widetilde{\mathbb E}[\beta_{\nu}(t,e;\widetilde{X}^*(t))\widetilde{X}^{1,\varepsilon}(t)]|^4\lambda(de)ds\Big\}^\frac{1}{2}\\
&\leq\varepsilon^2\rho(\varepsilon).
\end{aligned}
\end{equation}

a$_5$) Notice that for each $e\in G,$ $|\beta_x(t,e)|\leq C(1\wedge|e|)\leq C$ and  $\lambda(G)<+\infty$,  (\ref{equ 3.4}) and (\ref{equ 3.23})
imply
\begin{equation}\label{equ 6.5}
\begin{aligned}
&\mathbb E\Big[\Big(\int_0^T\int_GR^2(t,e)(\beta_x(t,e)+1)X^{1,\varepsilon}(t)
\widetilde{\mathbb E}[\beta_\nu(t,e;\widetilde{X}^*(t))\widetilde{X}^{1,\varepsilon}(t)]\lambda(de)dt\Big)^2\Big]\\
&\leq C \mathbb E\Big[\sup_{t\in[0,T]}|X^{1,\varepsilon}(t)|^2
\Big(\int_0^T\int_G|R^2(t,e)\widetilde{\mathbb E}[\beta_\nu(t,e;\widetilde{X}^*(t))\widetilde{X}^{1,\varepsilon}(t)]|\lambda(de)dt\Big)^2
\Big]\\
&\leq C \Big\{\mathbb E\Big[\sup_{t\in[0,T]}|X^{1,\varepsilon}(t)|^8\Big]\Big\}^{\frac{1}{4}}
\Big\{\mathbb E\Big[\int_0^T\int_G|\widetilde{\mathbb E}[\beta_\nu(t,e;\widetilde{X}^*(t))\widetilde{X}^{1,\varepsilon}(t)]|^4\lambda(de)dt  \Big]\Big\}^\frac{1}{2}\\
&\leq C\varepsilon^2\rho(\varepsilon).
\end{aligned}
\end{equation}
Combining the above estimates a$_1$)-a$_5$), we have
\begin{equation}\label{equ 6.6}
\mathbb E\Big[\Big(\int_0^T|A_4(t)\mathbbm{1}_{E_\varepsilon}(t)|dt\Big)^2\Big]+
\mathbb E\Big[\Big(\int_0^T|A_5(t)|dt\Big)^2\Big]
\leq C \varepsilon^2\rho(\varepsilon).
\end{equation}

ii) of (\ref{equ 4.9}) can be calculated with the similar argument.

Let us now turn to  $C_4(s,e)$.  Through analysing the definition of $C_4(s,e)$,
in order to prove iii) in (\ref{equ 4.9})
we just need to estimate the following terms.

b$_1$)  By  (\ref{equ 3.4}), (\ref{equ 3.23}) and $\lambda(G)<+\infty$, one knows
\begin{equation}\label{equ 6.7}
\begin{aligned}
&\mathbb E\Big[\Big(
\int_0^T\int_GY^2(t)X^{1,\varepsilon}(t)\widetilde{\mathbb E}[\beta_\nu(t,e;\widetilde{X}^*(t))\widetilde{X}^{1,\varepsilon}(t)]\lambda(de)dt
\Big)^2\Big]\\
&\leq C \Big\{\mathbb E\Big[\sup_{t\in[0,T]}|Y^2(t)|^8\Big]\Big\}^{\frac{1}{4}}
\Big\{\mathbb E\Big[\sup_{t\in[0,T]}|X^{1,\varepsilon}(t)|^8\Big]\Big\}^{\frac{1}{4}}\cdot \\
&\qquad\qquad\qquad\qquad\qquad\qquad \qquad \Big\{\mathbb E\Big[\int_0^T\int_G|\widetilde{\mathbb E}[\beta_\nu(t,e;\widetilde{X}^*(t))\widetilde{X}^{1,\varepsilon}(t))]|^4\lambda(de)dt
\Big]\Big\}^{\frac{1}{2}}\\
&\leq C\varepsilon^2\rho(\varepsilon).
\end{aligned}
\end{equation}

b$_2$) On the other hand, from (\ref{equ 3.4}), (\ref{equ 3.23}) again, it also yields
\begin{equation}\label{equ 6.8}
\begin{aligned}
&\mathbb E\Big[\Big(\int_0^T\int_G
Y^2(t)\beta_x(t,e)X^{1,\varepsilon}(t)\widetilde{\mathbb E}[\beta_\nu(s,e;\widetilde{X}^*(t))\widetilde{X}^{1,\varepsilon}(t)]\lambda(de)dt
\Big)^2\Big]\\
\end{aligned}
\end{equation}
$$
\begin{aligned}
&\leq CE\Big[\sup_{t\in[0,T]}|Y^2(t)|^2\sup_{t\in[0,T]}|X^{1,\varepsilon}(t)|^2\Big(\int_0^T\int_G
(1\wedge|e|)|\widetilde{\mathbb E}[\beta_\nu(s,e;\widetilde{X}^*(t))\widetilde{X}^{1,\varepsilon}(t)]|\lambda(de)dt
\Big)^2  \Big]\\
&\leq C\varepsilon^2\rho(\varepsilon).
\end{aligned}
$$
According to the above estimates, iii) of (\ref{equ 4.9}) can be obtained.

We are now ready to investigate iv) of (\ref{equ 4.9}), i.e.,
$$
\mathbb E\Big[
\int_0^T(|M(s)|^2+|B(s)|^2+\int_G|C(s,e)|^2\lambda(de))\mathbbm{1}_{E_\varepsilon}(s)ds\Big]
\leq C \varepsilon\rho(\varepsilon).
$$

Recall the definitions of $M(s), B(s), C(s,e)$, it is feasible to consider some central estimates. Let us now show them one by one.

To begin with,
the following two estimates are the need for proving
$\mathbb E\Big[\int_0^T|B(s)|^2\mathbbm{1}_{E_\varepsilon}(s)ds\Big]\leq C \varepsilon\rho(\varepsilon)$.
From (\ref{equ 3.23}) and the boundness of $\sigma_{\nu a}$,     we have
\begin{equation}\label{equ 6.9}
\begin{aligned}
&\mathbb E\Big[\int_0^T\mathbbm{1}_{E_\varepsilon}(t)|Y^1(t)\widetilde{\mathbb E}
[\sigma_\nu(t;\widetilde{X}^*(t))\widetilde{X}^{1,\varepsilon}(t)]|^2dt\Big]\\
&\leq \mathbb E\Big[\sup_{t\in[0,T]}|Y^1(t)|^2\int_0^T\mathbbm{1}_{E_\varepsilon}(t)|\widetilde{\mathbb E}
[\sigma_\nu(t;\widetilde{X}^*(t))\widetilde{X}^{1,\varepsilon}(t)]|^2dt\Big]\\
&\leq \Big\{\mathbb E \big[\sup_{t\in[0,T]}|Y^1(t)|^4\big]\Big\}^{\frac{1}{2}}
\Big\{
\mathbb E\Big[(\int_0^T\mathbbm{1}_{E_\varepsilon}(t)|\widetilde{\mathbb E}
\big[\sigma_\nu(t;\widetilde{X}^*(t))\widetilde{X}^{1,\varepsilon}(t)\big]|^2dt)^2\Big]\Big\}^{\frac{1}{2}}\\
&\leq C \varepsilon^{\frac{1}{2}}
\Big\{\mathbb E\Big[\int_0^T|\widetilde{\mathbb E}
[\sigma_\nu(t;\widetilde{X}^*(t))\widetilde{X}^{1,\varepsilon}(t)]|^4dt\Big]\Big\}^{\frac{1}{2}}
\leq C \varepsilon^{\frac{3}{2}}\rho(\varepsilon),
\end{aligned}
\end{equation}
and
\begin{equation}\label{equ 6.10}
\begin{aligned}
&\mathbb E\Big[\int_0^T\mathbbm{1}_{E_\varepsilon}(t)|Y^1(t)|^2
|\widetilde{\mathbb E}[\sigma_{\nu a}(t;\widetilde{X}^*(t))(\widetilde{X}^{1,\varepsilon}(t))^2]|^2dt\Big]\\
&\leq\varepsilon \mathbb E\Big[ \sup_{t\in[0,T]}|Y^1(t)|^2\Big]\mathbb E\Big[ \sup_{t\in[0,T]}|X^{1,\varepsilon}(t)|^4\Big]\leq C\varepsilon^3.
\end{aligned}
\end{equation}

\indent What's more, let us show
$\mathbb E\Big[\int_0^T\int_G|C(t,e)|^2\mathbbm{1}_{E_\varepsilon}(t)\lambda(de)dt\Big]
\leq C \varepsilon\rho(\varepsilon).$
As in the preceding proof we are primarily concerned with expectation terms. As for
the expectation term in $C_2(t,e)$,  due to $\lambda(G)<+\infty$, (\ref{equ 3.23}) allows to show
\begin{equation}\label{equ 6.11}
\begin{aligned}
&\mathbb E\Big[ \int_0^T\int_G\mathbbm{1}_{E_\varepsilon}(t)|Y^1(t)\widetilde{\mathbb E}[\beta_\nu(t,e;\widetilde{X}^*(t))\widetilde{X}^{1,\varepsilon}]|^2\lambda(de)dt\Big]\\
&\leq \mathbb E\Big[\sup_{t\in[0,T]}|Y^1(t)|^2 \int_0^T\int_G\mathbbm{1}_{E_\varepsilon}(t)|\widetilde{\mathbb E}[\beta_\nu(t,e;\widetilde{X}^*(t))\widetilde{X}^{1,\varepsilon}]|^2\lambda(de)dt\Big]\\
&\leq\Big\{\mathbb E [\sup_{t\in[0,T]}|Y^1(t)|^4] \Big\}^{\frac{1}{2}}
\Big\{\mathbb E [ (\int_0^T\int_G\mathbbm{1}_{E_\varepsilon}(t)
|\widetilde{\mathbb E}[\beta_\nu(t,e;\widetilde{X}^*(t))\widetilde{X}^{1,\varepsilon}]|^2\lambda(de)dt)^2]\Big\}^{\frac{1}{2}}\\
&\leq   C\varepsilon^{\frac{1}{2}}  \Big\{\mathbb E[\int_0^T\int_G|\widetilde{\mathbb E}[\beta_\nu(t,e;\widetilde{X}^*(t))\widetilde{X}^{1,\varepsilon}]|^4\lambda(de)dt)]\Big\}^{\frac{1}{2}}\\
&\leq C\varepsilon^{\frac{3}{2}}\rho(\varepsilon).
\end{aligned}
\end{equation}
As regards the expectation term in $C_3(t,e)$, the boundness of $\beta_{\nu a}$ can imply

\begin{equation}\label{equ 6.12}
\begin{aligned}
&\mathbb E\Big[ \int_0^T\int_G\mathbbm{1}_{E_\varepsilon}(t)|Y^1(t)|^2
|\widetilde{\mathbb E}[\beta_{\nu a}(t,e;\widetilde{X}^*(t))(\widetilde{X}^{1,\varepsilon}(t))^2]|^2\lambda(de)dt\Big]\\
&\leq \varepsilon \Big\{\mathbb E[\sup_{t\in[0,T]}|Y^1(t)|^4]\Big\}^\frac{1}{2}\Big\{\mathbb E[\sup_{t\in[0,T]}|X^{1,\varepsilon}(t)|^8]\Big\}^\frac{1}{2}
\leq C\varepsilon^3.
\end{aligned}
\end{equation}

Finally,  the proof of  $\mathbb E\Big[\int_0^T|M(s)|^2\mathbbm{1}_{E_\varepsilon}(s)ds\Big]\leq  \varepsilon\rho(\varepsilon)$
is analogous to that of $\mathbb E\Big[\int_0^T|B(s)|^2\\
\mathbbm{1}_{E_\varepsilon}(s)ds\Big]\leq \varepsilon\rho(\varepsilon).$
So here we omit it. Combining all the above estimates, we can get iv) of (\ref{equ 4.9}). The proof is completed.

\subsection{The second order expansion of $I_3(s)$}

Making the first-order expansion of $f$ and according to  the definitions of $M(s), B_2(s), B_3(s), C(s,e)$, we obtain
\begin{equation}\label{equ 6.13}
\begin{aligned}
I_3(s)&=f_y(s)\breve{P}(s)+f_z(s)\breve{Q}(s)+\int_Gf_k(s)\breve{K}(s,e)\lambda(de)+\widetilde{\mathbb E}[f_{\mu_2}(s;\widetilde{\Lambda}^*(s))\widetilde{\breve{P}}(s)]\\
&+(X^{1,\varepsilon}(s)+X^{2,\varepsilon}(s))
\Big(f_x(s)+f_y(s)Y^1(s)+f_z(s)(Y^1(s)\sigma_x(s)+Z^1(s))\\
&+\int_Gf_k(s)(Y^1(s)\beta_x(s,e)+R^1(s,e))\lambda(de)\Big)
+f_z(s)Y^1(s)\widetilde{\mathbb E}[\sigma_{\nu}(s;\widetilde{X}^*(s))(\widetilde{X}^{1,\varepsilon}(s)+\widetilde{X}^{2,\varepsilon}(s))]\\
&+\widetilde{\mathbb E}[f_{\mu_1}(s;\widetilde{\Lambda}^*(s))(\widetilde{X}^{1,\varepsilon}(s)+\widetilde{X}^{2,\varepsilon}(s))]
+\widetilde{\mathbb E}[f_{\mu_2}(s;\widetilde{\Lambda}^*(s))\widetilde{Y}^1(s)(\widetilde{X}^{1,\varepsilon}(s)+\widetilde{X}^{2,\varepsilon}(s))]\\
&+\frac{1}{2}(X^{1,\varepsilon}(s))^2\Big(
f_y(s)Y^2(s)+f_z(s)(Y^1(s)\sigma_{xx}(s)+2Y^2(s)\sigma_x(s)+Z^2(s))\\
&+\int_Gf_k(s)(Y^1(s)\beta_{xx}(s,e)+Y^2(s)(2\beta_x(s,e)+(\beta_x(s,e))^2+R^2(s,e)))\lambda(de)\Big)\\
&+\frac{1}{2}f_z(s)Y^1(s)\widetilde{\mathbb E}[\sigma_{\nu a}(s;\widetilde{X}^*(s))(\widetilde{X}^{1,\varepsilon}(s))^2]
+\frac{1}{2}\int_Gf_k(s)Y^1(s)\widetilde{\mathbb E}[\beta_{\nu a}(s,e;\widetilde{X}^*(s))(\widetilde{X}^{1,\varepsilon}(s))^2]\lambda(de)\\
&+\frac{1}{2}\widetilde{\mathbb E}[f_{\mu_2}(s;\widetilde{\Lambda}^*(s))\widetilde{Y}^1(s)(\widetilde{X}^{1,\varepsilon}(s))^2]+\Delta(s),\\
\end{aligned}
\end{equation}
where
\begin{equation}\label{equ 6.14}
\begin{aligned}
\Delta(s)&=\int_0^1(f^\theta_x(s)-f_x(s))(X^{1,\varepsilon}(s)+X^{2,\varepsilon}(s))d\theta+\int_0^1(f^\theta_y(s)-f_y(s))(\breve{P}(s)+M(s))d\theta\\
&+\int_0^1(f^\theta_z(s)-f_z(s))(\breve{Q}(s)+B_2(s)+\frac{1}{2}B_3(s))d\theta
+\int_0^1\int_G(f^\theta_k(s)-f_k(s))(\breve{K}(s,e)\\
&+C(s,e))\lambda(de)d\theta
+\int_0^1\widetilde{\mathbb E}[(f^\theta_{\mu_1}(s;\widetilde{\Lambda}^*(s))-f_{\mu_1}(s;\widetilde{\Lambda}^*(s)))
(\widetilde{X}^{1,\varepsilon}(s)+\widetilde{X}^{2,\varepsilon}(s))]d\theta\\
&+\int_0^1\widetilde{\mathbb E}[(f^\theta_{\mu_2}(s;\widetilde{\Lambda}^*(s))-f_{\mu_2}(s;\widetilde{\Lambda}^*(s)))
(\widetilde{\breve{P}}(s)+\widetilde{M}(s))]d\theta,\\
\end{aligned}
\end{equation}
and  we define, for $0<\varrho<1,$ and $l=x,y,z,k$,
$$
\begin{aligned}
f^\varrho_l(s):&=\frac{\partial f}{\partial l}(s,X^*(s)+\varrho(X^{1,\varepsilon}(s)+X^{2,\varepsilon}(s)), Y^*(s)+\varrho(\breve{P}(s)+M(s)), \\
&\qquad Z^*(s)+\varrho(\breve{Q}(s)+B_2(s)+\frac{1}{2}B_3(s)),K^*(s,e)+\varrho(\breve{K}(s,e)+C(s,e)), \\
&\qquad P_{(X^*(s)+\varrho(X^{1,\varepsilon}(s)+X^{2,\varepsilon}(s)),
      Y^*(s)+\varrho(\breve{P}(s)+M(s)))},u^*(s)).
\end{aligned}
$$
$f^\theta_{\mu_1}(s;\widetilde{\Lambda}^*(s))$ and $f^\theta_{\mu_2}(s;\widetilde{\Lambda}^*(s))$ can be
understood in the same sense.

Let us now focus on $\Delta(s)$. To start with, we argue that
the first term on the right hand side can be written as
\begin{equation}\label{equ 6.15-1}
\begin{aligned}
&\bullet\quad \int_0^1(f_x^\theta(s)-f_x(s))(X^{1,\varepsilon}(s)+X^{2,\varepsilon}(s))d\theta
=\frac{1}{2}(X^{1,\varepsilon}(s))^2\Big(
f_{xx}(s)+f_{xy}(s)Y^1(s)\\
\end{aligned}
\end{equation}
$$
\begin{aligned}
&\qquad\qquad+f_{xz}(s)(Y^1(s)\sigma_x(s)+Z^1(s))+\int_Gf_{xk}(s)(Y^1(s)\beta_x(s,e)+R^1(s,e))\lambda(de)\Big)+I_1(s),
\end{aligned}
$$
where $\mathbb E\bigg[\int_0^T|I_1(s)|^2ds]\leq \varepsilon^2\rho(\varepsilon).$

In fact, according to Taylor expansion one has
\begin{equation}\label{equ 6.15}
\begin{aligned}
&\int_0^1(f_x^\theta(s)-f_x(s))(X^{1,\varepsilon}(s)+X^{2,\varepsilon}(s))d\theta
=\frac{1}{2}\Big\{
f_{xx}(s)(X^{1,\varepsilon}(s)+X^{2,\varepsilon}(s))^2+f_{xy}(s)(X^{1,\varepsilon}(s)
\\
&+X^{2,\varepsilon}(s))(\breve{P}(s)+M(s))
+f_{xz}(s)(X^{1,\varepsilon}(s)+X^{2,\varepsilon}(s))(\breve{Q}(s)+B_2(s)+\frac{1}{2}B_3(s))\\
&+\int_Gf_{xk}(s)(X^{1,\varepsilon}(s)+X^{2,\varepsilon}(s))(\breve{K}(s,e)+C(s,e))\lambda(de)\Big\}+\Theta_1(s)+\Theta_2(s),
\end{aligned}
\end{equation}
where
$$
\begin{aligned}
\Theta_1(s)&=\widetilde{\mathbb E}[f_{x\mu_1}(s;\widetilde{\Lambda}^*(s))(\widetilde{X}^{1,\varepsilon}(s)+\widetilde{X}^{2,\varepsilon}(s))]
(X^{1,\varepsilon}(s)+X^{2,\varepsilon}(s))\\
&\quad +\widetilde{\mathbb E}[f_{x\mu_2}(s;\widetilde{\Lambda}^*(s))(\widetilde{\breve{P}}(s)+\widetilde{M}(s))]
(X^{1,\varepsilon}(s)+X^{2,\varepsilon}(s)); \\
\Theta_2(s)&=\int_0^1\theta d\theta\int_0^1d\rho\Big\{
(f^{\rho\theta}_{xx}(s)-f_{xx}(s))(X^{1,\varepsilon}(s)+X^{2,\varepsilon}(s))^2\\
&\quad +(f_{xy}^{\rho\theta}(s)-f_{xy}(s))(X^{1,\varepsilon}(s)+X^{2,\varepsilon}(s))(\breve{P}(s)+M(s))\\
&\quad +(f_{xz}^{\rho\theta}(s)-f_{xz}(s))(X^{1,\varepsilon}(s)+X^{2,\varepsilon}(s))(\breve{Q}(s)+B_2(s)+\frac{1}{2}B_3(s))\\
&\quad +\int_G(f_{xk}^{\rho\theta}(s)-f_{xk}(s))(X^{1,\varepsilon}(s)+X^{2,\varepsilon}(s))(\breve{K}(s,e)+C(s,e))\lambda(de)
\Big\}.
\end{aligned}
$$

\noindent For proving (\ref{equ 6.15-1}), we will show six auxiliary estimates:
\begin{equation}\label{equ 6.16}
\begin{aligned}
&\mathrm{i)}\    \mathbb E[\int_0^T|f_{xx}(s)(X^{1,\varepsilon}(s)+X^{2,\varepsilon}(s))^2-f_{xx}(s)(X^{1,\varepsilon}(s))^2|^2ds]\leq\varepsilon^2\rho(\varepsilon);\\
&\mathrm{ii)}\   \mathbb E[\int_0^T|f_{xy}(s)(\breve{P}(s)+M(s))(X^{1,\varepsilon}(s)+X^{2,\varepsilon}(s))
-f_{xy}(s)Y^1(s)(X^{1,\varepsilon}(s))^2|^2ds]\leq\varepsilon^2\rho(\varepsilon);\\
&\mathrm{iii)}\   \mathbb E[\int_0^T|f_{xz}(s)(\breve{Q}(s)+B_2(s)+\frac{1}{2}B_3(s))(X^{1,\varepsilon}(s)+X^{2,\varepsilon}(s))\\
&\qquad\qquad\qquad\qquad\qquad\qquad\qquad\quad -f_{xz}(s)(Y^1(x)\sigma_x(s)+Z^1(s))(X^{1,\varepsilon}(s))^2|^2ds]\leq\varepsilon^2\rho(\varepsilon);\\
&\mathrm{iv)}\  \mathbb E[\int_0^T\int_G|f_{xk}(s)(\breve{K}(s,e)+C(s,e))(X^{1,\varepsilon}(s)+X^{2,\varepsilon}(s))\\
&\qquad\qquad\qquad\qquad\qquad\qquad\qquad -f_{xk}(s)(Y^1(x)\beta_x(s,e)+R^1(s,e))(X^{1,\varepsilon}(s))^2|^2\lambda(de)ds]\leq\varepsilon^2\rho(\varepsilon);\\
&\mathrm{v)}\  \mathbb E[\int_0^T|\Theta_1(s)|^2ds]\leq\varepsilon^2\rho(\varepsilon);\quad
\mathrm{vi)}\ \mathbb E[\int_0^T|\Theta_2(s)|^2ds]\leq\varepsilon^2\rho(\varepsilon).
\end{aligned}
\end{equation}

The proofs of i)-iii) are very analogous to that of iv). So it is now the main work to estimate iv)-vi) in (\ref{equ 6.16}).
As for iv), we are just concerned with the following expectation term because the other terms can be dealt with similarly.
Notice the boundness of $f_{xk}$  and (\ref{equ 3.23}), we get
\begin{equation}\label{equ 6.17}
\begin{aligned}
&\mathbb E\Big[\int_0^T\int_G|f_{xk}(s)X^{1,\varepsilon}(s)Y^1(s)
\widetilde{\mathbb E}[\beta_\nu(s,e;\widetilde{X}^*(s))\widetilde{X}^{1,\varepsilon}(s)]|^2\lambda(de)ds\Big]\\
&\leq C\mathbb E\Big[\sup_{s\in[0,T]}|X^{1,\varepsilon}(s)|^2\cdot\sup_{s\in[0,T]}|Y^1(s)|^2
\cdot\int_0^T\int_G
|\widetilde{\mathbb E}[\beta_\nu(s,e;\widetilde{X}^*(s))\widetilde{X}^{1,\varepsilon}(s)]|^2\lambda(de)ds\Big]\\
\end{aligned}
\end{equation}
$$
\begin{aligned}
&\leq C\Big\{ \mathbb E\Big[\sup_{s\in[0,T]}|X^{1,\varepsilon}(s)|^8\Big] \Big\}^\frac{1}{4}
\Big\{ \mathbb E\Big[\sup_{s\in[0,T]}|Y^1(s)|^8\Big] \Big\}^\frac{1}{4}\cdot\\
&\qquad\qquad \qquad \qquad \qquad \qquad \qquad \qquad
\Big\{ \mathbb E\Big[\int_0^T\int_G
|\widetilde{\mathbb E}[\beta_\nu(s,e;\widetilde{X}^*(s))\widetilde{X}^{1,\varepsilon}(s)]|^4\lambda(de)ds\Big] \Big\}^\frac{1}{2}\\
&\leq \varepsilon^2\rho(\varepsilon).
\end{aligned}
$$

Now observe $\Theta_1$,
the central work of proving v) is to estimate the following mean-field term,
\begin{equation}\label{equ 6.18}
\begin{aligned}
&\mathbb E\Big[\int_0^T|\widetilde{\mathbb E}[f_{x\mu_2}(s;\widetilde{Y}^*(s))\widetilde{Y}^1(s)\widetilde{X}^{1,\varepsilon}(s)]X^{1,\varepsilon}(s)|^2ds\Big]\\
&\leq C \mathbb E\Big[\sup_{s\in[0,T]}|X^{1,\varepsilon}(s)|^2\cdot
\int_0^T|\widetilde{\mathbb E}[f_{x\mu_2}(s;\widetilde{\Lambda}^*(s))\widetilde{Y}^1(s)\widetilde{X}^{1,\varepsilon}(s)]|^2ds\Big]\\
&\leq C\Big\{\mathbb E\Big[\sup_{s\in[0,T]}|X^{1,\varepsilon}(s)|^4 \Big]\Big\}^\frac{1}{2}
\Big\{\mathbb E\Big[\int_0^T|\widetilde{\mathbb E}[f_{x\mu_2}(s;\widetilde{\Lambda}^*(s))\widetilde{Y}^1(s)\widetilde{X}^{1,\varepsilon}(s)]|^4ds\Big]\Big\}^\frac{1}{2}\\
&\leq \varepsilon^2\rho(\varepsilon).
\end{aligned}
\end{equation}
The last step comes from (\ref{equ 3.4}) and (\ref{equ 3.32}).

For $\Theta_2$, since  the second-order derivatives of $f$ is Lipschitz continuous, hence, it is
easy to check that the power of $\varepsilon$  for each term of $\Theta_2$ is not less than $\frac{3}{2}$. So, vi) in (\ref{equ 6.16})
holds true.

Next, with the preceding  argument, we also have
\begin{equation}\label{equ 6.27}
\begin{aligned}
& \bullet\quad \int_0^1(f^\theta_y(s)-f_y(s))(\breve{P}(s)+M(s))d\theta
=\frac{1}{2}(X^{1,\varepsilon}(s))^2Y^1(s)\Big(
f_{xx}(s)+f_{xy}(s)Y^1(s)\\
&\qquad\qquad+f_{xz}(s)(Y^1(s)\sigma_x(s)+Z^1(s))+\int_Gf_{xk}(s)(Y^1(s)\beta_x(s,e)+R^1(s,e))\lambda(de)\Big)+I_2(s),\\
&\bullet\quad \int_0^1(f^\theta_z(s)-f_z(s))(\breve{Q}(s)+B_2(s)+\frac{1}{2}B_3(s))d\theta\\
&\qquad=\frac{1}{2}(X^{1,\varepsilon}(s))^2(Y^1(s)\sigma_x(s)+Q^1(s))
\Big(f_{xx}(s)+f_{xy}(s)Y^1(s)
+f_{xz}(s)(Y^1(s)\sigma_x(s)+Z^1(s))\\
&\qquad\qquad\qquad\qquad\qquad\qquad\qquad\qquad\quad+\int_Gf_{xk}(s)(Y^1(s)\beta_x(s,e)+R^1(s,e))\lambda(de)\Big)+I_3(s),\\
&\bullet\quad \int_0^1\int_G(f^\theta_k(s)-f_k(s))(\breve{K}(s,e)+C(s,e))\lambda(de)d\theta\\
&\qquad=\frac{1}{2}(X^{1,\varepsilon}(s))^2\int_G(Y^1\beta_x(s,e)+R^1(s,e))\lambda(de)
\Big(f_{xx}(s)+f_{xy}(s)Y^1(s)+f_{xz}(s)(Y^1(s)\sigma_x(s)\\
&\qquad\qquad\qquad\qquad\qquad\qquad\qquad+Z^1(s))+\int_Gf_{xk}(s)(Y^1(s)\beta_x(s,e)+R^1(s,e))\lambda(de)\Big)+I_4(s),
\end{aligned}
\end{equation}
where $\mathbb E [\int_0^T|I_2(s)|^2+|I_3(s)|^2+|I_4(s)|^2ds]\leq \varepsilon^2\rho(\varepsilon).$

In addition, we now switch to  analysing the mean-field term
$\int_0^1\widetilde{\mathbb E}[(f^\theta_{\mu_1}(s;\widetilde{\Lambda}^*(s))-f_{\mu_1}(s;\widetilde{\Lambda}^*(s)))\\
(\widetilde{X}^{1,\varepsilon}(s)+\widetilde{X}^{2,\varepsilon}(s))]d\theta.$
It follows Taylor expansion that
\begin{equation}\label{equ 6.28}
\begin{aligned}
&\bullet\ \int_0^1\widetilde{\mathbb E}[(f^\theta_{\mu_1}(s;\widetilde{\Lambda}^*(s))-f_{\mu_1}(s;\widetilde{\Lambda}^*(s)))
(\widetilde{X}^{1,\varepsilon}(s)+\widetilde{X}^{2,\varepsilon}(s))]d\theta
=\frac{1}{2}\widetilde{\mathbb E}[f_{\mu_1 a_1}(s;\widetilde{\Lambda}^*(s))(\widetilde{X}^{1,\varepsilon}(s))^2]+I_5(s),
\end{aligned}
\end{equation}
where
$$
\begin{aligned}
I_5(s)&=I_{5,1}(s)+I_{5,2}(s)+I_{5,3}(s),\\
I_{5,1}(s)&=\frac{1}{2}\widetilde{\mathbb E}\Big[f_{\mu_1 a_1}(s;\widetilde{\Lambda}^*(s))
\Big((\widetilde{X}^{1,\varepsilon}(s)+\widetilde{X}^{2,\varepsilon}(s))^2-(\widetilde{X}^{1,\varepsilon}(s))^2\Big)\Big],\\
I_{5,2}(s)&=\frac{1}{2}\Big\{\widetilde{\mathbb E}\Big[
(\widetilde{X}^{1,\varepsilon}(s)+\widetilde{X}^{2,\varepsilon}(s))
\Big(f_{\mu_1x}(s;\widetilde{\Lambda}^*(s))(X^{1,\varepsilon}(s)+X^{2,\varepsilon}(s))+ f_{\mu_1y}(s;\widetilde{\Lambda}^*(s))(\breve{P}(s)+M(s))\\
&\quad+f_{\mu_1z}(s;\widetilde{\Lambda}^*(s))(\breve{Q}(s)+B_2(s)+\frac{1}{2}B_3(s))
+\int_Gf_{\mu_1k}(s;\widetilde{\Lambda}^*(s))(\breve{K}(s,e)+C(s,e))\lambda(de)\\
&\quad+\widehat{\mathbb{E}}[f_{\mu_1\mu_1}(s;\widehat{\widetilde{\Lambda}}^*(s))(\widehat{X}^{1,\varepsilon}(s)+\widehat{X}^{2,\varepsilon}(s))]
+\widehat{\mathbb{E}}[f_{\mu_1\mu_2}(s;\widehat{\widetilde{\Lambda}}^*(s))(\widehat{\breve{P}}(s)+\widehat{M}(s))]
\Big)\Big]\Big\},\\
I_{5,3}(s)&=\int_0^1\theta d\theta\int_0^1d\rho\Big\{
\widetilde{\mathbb E}\Big[(\widetilde{X}^{1,\varepsilon}(s)+\widetilde{X}^{2,\varepsilon}(s))
\Big( (f^{\rho\theta}_{\mu_1 x}(s;\widetilde{\Lambda}(s))-f_{\mu_1 x}(s;\widetilde{\Lambda}(s)))(X^{1,\varepsilon}(s)+X^{2,\varepsilon}(s))\\
&\quad + (f^{\rho\theta}_{\mu_1 y}(s;\widetilde{\Lambda}(s))-f_{\mu_1 y}(s;\widetilde{\Lambda}(s)))(\breve{P}(s)+M(s))\\
&\quad +(f^{\rho\theta}_{\mu_1 z}(s;\widetilde{\Lambda}(s))-f_{\mu_1 z}(s;\widetilde{\Lambda}(s)))(\breve{Q}(s)+B_2(s)+\frac{1}{2}B_3(s))\\
&\quad +\int_G(f^{\rho\theta}_{\mu_1 k}(s;\widetilde{\Lambda}(s))-f_{\mu_1 k}(s;\widetilde{\Lambda}(s)))(\breve{K}(s,e)+C(s,e))\lambda(de)\\
& \quad+\widehat{\mathbb{E}} [(f^{\rho\theta}_{\mu_1\mu_1}(s;\widehat{\widetilde{\Lambda}}^*(s))
-f_{\mu_1\mu_1}(s;\widehat{\widetilde{\Lambda}}^*(s))(\widehat{X}^{1,\varepsilon}(s)+\widehat{X}^{2,\varepsilon}(s)) ]\\
& \quad+\widehat{\mathbb E} [(f^{\rho\theta}_{\mu_1\mu_2}(s;\widehat{\widetilde{\Lambda}}^*(s))
-f_{\mu_1\mu_2}(s;\widehat{\widetilde{\Lambda}}^*(s))(\widehat{\breve{P}}(s)+\widehat{M}(s)) ]\\
& \quad+(f^{\rho\theta}_{\mu_1 a_1}(s;\widetilde{\Lambda}(s))-f_{\mu_1 a_1}(s;\widetilde{\Lambda}(s))) (\widetilde{X}^{1,\varepsilon}(s)
+\widetilde{X}^{2,\varepsilon}(s))
\Big)\Big]\Big\}.
\end{aligned}
$$
We now want to show $\mathbb E\bigg[\bigg(\int_0^TI_5(s)ds\bigg)^2\bigg]\leq \varepsilon^2\rho(\varepsilon)$. Indeed,
 From the boundness of $f_{\mu_1 a_1}$ and (\ref{equ 3.4}), it is easy to get
$\mathbb E\bigg[\bigg(\int_0^TI_{5,1}(s)ds\bigg)^2\bigg]\leq  C\varepsilon^3.$
Next let us prove $\mathbb E\bigg[\bigg(\int_0^TI_{5,2}(s)ds\bigg)^2\bigg]\leq\varepsilon^2\rho(\varepsilon).$
According to the structure of $I_{5,2}(s)$, we know that it is enough to only handle the following jump term and expectation term:
 $$
 \begin{aligned}
&\mathrm{i)}\ \mathbb E\Big[\Big(\int_0^T\int_G \widetilde{\mathbb E}[
f_{\mu_1k}(s;\widetilde{\Lambda}^*(s))\widetilde{X}^{1,\varepsilon}(s)](\breve{K}(s,e)+C(s,e))\lambda(de)ds\Big)^2\Big]\leq \varepsilon^2\rho(\varepsilon),\\
&\mathrm{ ii)}\ \mathbb E\Big[\Big(\int_0^T\widetilde{\mathbb E}\Big[ \widetilde{X}^{1,\varepsilon}(s)
\widehat{\mathbb{E}}[f_{\mu_1\mu_2}(s;\widetilde{X}^*(s),\widehat{Y}^*(s))(\widehat{\breve{P}}(s)+\widehat{M}(s))\Big]
ds\Big)^2  \Big]\leq   \varepsilon^2\rho(\varepsilon).
\end{aligned}
$$

For i), H\"{o}lder inequality, (\ref{equ 4.3}), (\ref{equ 3.4}), (\ref{equ 3.4-11})-ii) with
$\widetilde{\psi}_3(t)=f_{\mu_1 k}(s;\widetilde{\Lambda}^*(s)),\ \widetilde{\psi}_2(t)\equiv1$,
the definition of $C(s,e)$
and the assumption $\lambda(G)<+\infty$, we have\\
$$
\begin{aligned}
&\mathbb E\Big[\Big(\int_0^T\int_G \widetilde{\mathbb E}[f_{\mu_1k}(s;\widetilde{\Lambda}^*(s))\widetilde{X}^{1,\varepsilon}(s)](\breve{K}(s,e)+C(s,e))\lambda(de)ds\Big)^2\Big]\\
&\leq \mathbb E\Big[ \int_0^T\int_G |\widetilde{\mathbb E}[f_{\mu_1k}(s;\widetilde{\Lambda}^*(s))\widetilde{X}^{1,\varepsilon}(s)]|^2\lambda(de)ds\cdot
 \int_0^T\int_G |\breve{K}(s,e)+C(s,e)|^2\lambda(de)ds\Big] \\
 &\leq \Big\{\mathbb E\Big[(\int_0^T\int_G |\widetilde{\mathbb E}[f_{\mu_1k}(s;\widetilde{\Lambda}^*(s))\widetilde{X}^{1,\varepsilon}(s)]|^2\lambda(de)ds)^2\Big]\Big\}^{\frac{1}{2}}
 \Big\{ \mathbb E\Big[ (\int_0^T\int_G |\breve{K}(s,e)+C(s,e)|^2\lambda(de)ds)^2\Big]\Big\}^{\frac{1}{2}} \\
 &\leq \Big\{\mathbb E\Big[\int_0^T\int_G |\widetilde{\mathbb E}[f_{\mu_1k}(s;\widetilde{\Lambda}^*(s))\widetilde{X}^{1,\varepsilon}(s)]|^4\lambda(de)ds\Big]\Big\}^{\frac{1}{2}}\cdot
   \Big\{ \mathbb E\Big[ (\int_0^T\int_G |\breve{K}(s,e)+C(s,e)|^2\lambda(de)ds)^2\Big]\Big\}^{\frac{1}{2}}\\
\end{aligned}
$$
 $$
\begin{aligned}
&\leq \varepsilon\rho(\varepsilon) \Big(\varepsilon\rho(\varepsilon)
 + \Big\{ \mathbb E\Big[ (\int_0^T\int_G |C(s,e)|^2\lambda(de)ds)^2\Big]\Big\}^{\frac{1}{2}}\Big)  \\
  &\leq \varepsilon\rho(\varepsilon) (\varepsilon\rho(\varepsilon)+\varepsilon)=\varepsilon^2\rho(\varepsilon).
\end{aligned}
$$
Let us calculate ii). From (\ref{equ 3.32})-ii), (\ref{equ 4.3}), (\ref{equ 3.4}) and the boundness of  $f_{\mu_1\mu_2}$, it
is easy to check
$$
\begin{aligned}
&\mathbb E\Big[\Big(\int_0^T\widetilde{\mathbb E}\Big[ \widetilde{X}^{1,\varepsilon}(s)
\widehat{\mathbb{E}}[f_{\mu_1\mu_2}(s;\widehat{\widetilde{\Lambda}}^*(s))(\widehat{\breve{P}}(s)+\widehat{M}(s))]\Big]ds\Big)^2 \Big]\\
&\leq \Big\{\mathbb E\bigg[\sup_{t\in[0,T]}|X^{1,\varepsilon}(s)|^4] \Big\}^{\frac{1}{2}}
\Big\{\mathbb E\widetilde{\mathbb E}\Big[\int_0^T|\widehat{\mathbb{E}}[f_{\mu_1\mu_2}(s;\widehat{\widetilde{\Lambda}}^*(s))(\widehat{\breve{P}}(s)+\widehat{M}(s))]|^4\Big] \Big\}^{\frac{1}{2}}\\
&\leq \varepsilon^2\rho(\varepsilon).
\end{aligned}
$$
According to the Assumption (A3.3),  (\ref{equ 3.4}), (\ref{equ 3.23}), (\ref{equ 3.32})  and (\ref{equ 4.3}) and the above two estimates,
applying H\"{o}lder inequality again, it yields
$\mathbb E\bigg[\bigg(\int_0^TI_{5,2}(s)ds\bigg)^2\bigg]\leq   \varepsilon^2\rho(\varepsilon).$
Besides, thanks to the continuous property of the second-order derivatives of $f$, one can
check the validity of $\mathbb E\bigg[\bigg(\int_0^TI_{5,3}(s)ds\bigg)^2\bigg]\leq   \varepsilon^2\rho(\varepsilon)$ easily.
Hence, we prove $\mathbb E\bigg[\bigg(\int_0^TI_5(s)ds\bigg)^2\bigg]\leq \varepsilon^2\rho(\varepsilon). $

Finally,  following the above argument, it also yields that
\begin{equation}\label{equ 6.29}
\begin{aligned}
&\bullet\ \int_0^1\widetilde{\mathbb E}[(f^\theta_{\mu_2}(s;\widetilde{\Lambda}^*(s))-f_{\mu_2}(s;\widetilde{\Lambda}^*(s)))
(\widetilde{\breve{P}}(s)+\widetilde{M}(s))]d\theta\\
&\qquad\qquad\qquad\qquad\qquad\qquad\qquad\qquad=\frac{1}{2}\widetilde{\mathbb E}[f_{\mu_2 a_2}(s;\widetilde{\Lambda}^*(s))(\widetilde{Y}^1(s))^2(\widetilde{X}^{1,\varepsilon}(s))^2]+I_6(s),
\end{aligned}
\end{equation}
here $I_6(s)$ satisfying $\mathbb E\bigg[\bigg(\int_0^TI_6(s)ds\bigg)^2\bigg]\leq \varepsilon^2\rho^*(\varepsilon).$ \\
Combining (\ref{equ 6.13}), (\ref{equ 6.15-1}),\ (\ref{equ 6.27}),\ (\ref{equ 6.28}), (\ref{equ 6.29}), we have  (\ref{equ 4.16}).


\renewcommand{\refname}{\large References}{\normalsize \ }

\end{document}